\documentclass[10pt]{article}
\usepackage{amsfonts, epsfig, amsmath, amssymb, xcolor,mathabx,amsthm}
\usepackage{mathrsfs}

\usepackage[english]{babel}

\textwidth 16.5cm \textheight 23cm
\oddsidemargin 0mm
\evensidemargin -4.5mm
\topmargin -10mm

\parindent 0.5cm



\newcommand{\eps}{\varepsilon}

\newcommand{\E}{\mathbf{E}}
\renewcommand{\P}{\mathbf{P}}

\renewcommand {\epsilon}{\varepsilon}

\newtheorem{thm}{Theorem}[section]
\newtheorem{cor}[thm]{Corollary}
\newtheorem{lem}[thm]{Lemma}

\theoremstyle{definition}

\newtheorem{rem}[thm]{Remark}
\newtheorem{exa}[thm]{Example}

\DeclareMathSymbol{\ophi}{\mathalpha}{letters}{"1E}

\newcommand{\e}{\varepsilon}

\renewcommand{\phi}{\varphi}
\newcommand{\lc}{\leq_C}

\newcommand{\be}{\begin{equation}}
\newcommand{\ee}{\end{equation}}
\newcommand{\ben}{\begin{equation*}}
\newcommand{\een}{\end{equation*}}

\newcommand{\ba}{\begin{equation}\begin{aligned}}
\newcommand{\ea}{\end{aligned}\end{equation}}

\renewcommand{\i}{\mathrm{i}}
\newcommand{\ex}{\mathrm{e}}
\newcommand{\di}{\mathrm{d}}

\newcommand{\cF}{\mathcal{F}}

\newcommand{\cL}{\mathcal{L}}

\newcommand{\bI}{\mathbb{I}}

\newcommand{\bN}{\mathbb{N}}
\newcommand{\bR}{\mathbb{R}}

\newcommand{\rB}{\mathscr{B}}
\newcommand{\rF}{\mathscr{F}}
\newcommand{\Xn}{X^{(n)}}
\newcommand{\XN}{X^{\langle n\rangle}}

\newcommand{\rmD}{\mathrm{D}}
\newcommand{\rmF}{\mathrm{F}}
\newcommand{\rmM}{\mathrm{M}}

\newfont{\cyrfnt}{wncyr10}
\def\J3{\cyrfnt{\rm \u{\cyrfnt I}}}
\def\j3{\cyrfnt{\rm \u{\cyrfnt i}}}

\allowdisplaybreaks[4]

\numberwithin{equation}{section}

\begin{document}
\title{A Tail-Respecting Splitting Numerical Scheme\\ for L\'evy-Driven SDEs With Superlinear Drifts}

\author{Olga Aryasova\footnote{Institute of Mathematics, Friedrich Schiller University Jena, Ernst--Abbe--Platz 2,
07743 Jena, Germany,
and Institute of Geophysics, National Academy of Sciences of Ukraine, Palladin Ave.\ 32, Kyiv
03680, Ukraine, and
Igor Sikorsky Kyiv Polytechnic Institute, Beresteiskyi Ave.\ 37, Kyiv 03056, Ukraine;
oaryasova@gmail.com},\
Oleksii Kulyk\footnote{Wroclaw University of Science and Technology, Faculty of Pure and Applied Mathematics, Wybrzeźe Wyspiańskiego
Str.\ 27, 50-370 Wroclaw, Poland; kulik.alex.m@gmail.com} \
and Ilya Pavlyukevich\footnote{Institute of Mathematics, Friedrich Schiller University Jena, Ernst--Abbe--Platz 2,
07743 Jena, Germany; ilya.pavlyukevich@uni-jena.de}}

\maketitle

\begin{abstract}
We present an explicit splitting numerical approximation scheme, denoted by $\{X^n\}$, for the effective simulation
of solutions $X$ to a multivariate stochastic differential equation (SDE)
with a superlinearly growing $\kappa$-dissipative drift, where $\kappa>1$, driven by a multiplicative
heavy-tailed L\'evy process that has a finite $p$-th moment, with $p>0$.
We show that  the strong $L^{p_X}$-convergence
$\sup_{t\in[0,T]}\E \|X^n_t-X_t\|^{p_X}=\mathcal O (h_n^{\gamma})$
holds for any $p_X\in (0,p+\kappa-1)$, which is exactly the range where the $p_X$-moment of the solution $X$
is known to be finite.
Additionally, for any $p_X\in (0,p)$ we establish strong uniform convergence:
$\E\sup_{t\in[0,T]} \|X^n_t-X_t\|^{p_X}=\mathcal{O} (  h_n^{\delta} )$.
In both cases we determine the convergence rates $\gamma$ and $\delta$.

In the special case of SDEs driven solely by a Brownian motion,
our numerical scheme preserves super-exponential moments of the solution.

The scheme $\{X^n\}$ is realized as a combination of a well-known Euler method with a Lie--Trotter type splitting technique.
\end{abstract}

\noindent
\textbf{Keywords:}
 stochastic differential equation, heavy tails, splitting method, explicit numerical scheme,
strong convergence, superlinear drift, dissipative drift, one-sided Lipschitz condition, convergence rate.

\smallskip

\noindent
\textbf{2020 Mathematics Subject Classification:}
60H35$^*$, 
60G51, 
60H10, 
65C05, 
65C30 


\section{Main results}

\subsection{Setting and results\label{s:1.1}}

In this paper we focus on strong numerical approximations of solutions of an  It\^o SDE with multiplicative noise
\ba
\label{e:1}
X_t=x & + \int_0^t A(X_s)\,\di s + \int_0^t a(X_{s})\,\di s + \int_0^t b(X_{s})\,\di B_s + \int_0^t c(X_{s-})\,\di Z_s,
\quad x\in\bR^d,\ t\in[0,\infty),
\ea
driven by a $d$-dimensional standard Brownian motion $B$, $d\in\bN$, and an independent $d$-dimensional L\'evy process $Z$
with the characteristic function
\ba
\E \ex^{\i \langle \lambda, Z_t \rangle}=\exp\Big(t \int_{\bR^d}
\big(\ex^{\i \langle\lambda, z\rangle} -1
- \i \langle\lambda, z\rangle\bI(\|z\|\leq 1)\big)\,\nu(\di z)
\Big),\quad \lambda\in\bR^d,\ t\in [0,\infty).
\ea
Here, $\nu$ denotes the L\'evy measure of $Z$, i.e., a measure on $(\bR^d,\rB(\bR^d))$ that satisfies the integrability condition
$\int_{\|z\|>0} (\|z\|^2\wedge 1)\,\nu(\di z)<\infty$.
The dimensions of the processes $X$, $B$ and $Z$ are taken to be equal for notational convenience.

For the remaining notation used throughout the paper, we refer the reader to
Section~\ref{s:nota}. In particular, we will frequently use the convenient
notation $\leq_C$, meaning that
$\text{l.h.s.} \leq_C \text{r.h.s.}$
is equivalent to
$\text{l.h.s.} \leq C\,\text{r.h.s.}$
for some constant $C\in(0,\infty)$ whose precise value is irrelevant for our purposes.

In this paper, we will work under various assumptions from the following list.
Depending on the context, only a subset of these assumptions will be required.

\medskip

\noindent
$\mathbf{H}^\mathrm{diss}_A$: The function $A\colon \bR^d\to\bR^d$ is locally
Lipschitz continuous and is \emph{superlinearly dissipative} in the following sense:
there are $\kappa\in(1,\infty)$ and   $C_{1},C_{2}\in(0,\infty)$ such that
\ba
\langle A(x),x\rangle \leq -C_1\|x\|^{1+\kappa}+C_2,\quad x\in\bR^d.
\ea

\smallskip

\noindent
$\mathbf{H}_{a,b,c}^{\mathrm{Lip}_b}$: The functions $a\colon \bR^d\to\bR^d$,
$b\colon \bR^d\to\bR^{d\times d}$ and $c\colon \bR^d\to\bR^{d\times d}$
are bounded and globally Lipschitz continuous.

\smallskip

\noindent
$\mathbf{H}_{\nu,p}$: There is $p\in(0,\infty)$ such that
\ba
\int_{\|z\|>1}\|z\|^p\nu(\di z)<\infty.
\ea

\noindent
$\mathbf{H}_A^{\text{Lip}_+}$ The function $A\colon \bR^d\to\bR^d$ is continuous
and satisfies the one-sided Lipschitz condition, i.e.,
there is $L\in (0,\infty)$ such that
\ba
\label{e:Lip+}
\langle A(x)-A(y),x-y\rangle \leq L \|x-y\|^2,\quad x,y\in\bR^d.
\ea

\noindent
$\mathbf{H}_{A_x,A_{xx}}$: $A\in C^2(\bR^d,\bR^d)$.
There exists $\chi\in[0,\infty)$ such that
\ba
\label{e:Ax-chi}
\|A_x(x)\| &\lc 1+\|x\|^\chi,
\ea
and  there exists $\eps\in(0,\infty)$ such that
for any $x\in \bR^d$ and any vector
$\phi=(\phi^1,\dots,\phi^d)^T$
\ba
\label{e:Axx}
\eps\|\phi\| \sum_{k=1}^d \|A^k_{xx}(x)\| |\phi^k|   + \langle A_x(x) \phi,\phi\rangle\lc \|\phi\|^2,
\ea
where
$A^k_{xx}$ is the Hesse matrix of the
$k$-th component $A^k$ of the drift $A$.

\begin{rem}
In dimension one, $d=1$, \eqref{e:Lip+} is equivalent to the estimate
\ba
A'(x)\leq L, \quad x\in\bR,
\ea
whereas \eqref{e:Axx} is equivalent to the estimate
\ba
\label{e:A''A'}
|A''(x)|\leq_C 1+ (A'(x))_-,\quad x\in\bR,
\ea
see further discussion and examples in Section \ref{s:sim} below.
\end{rem}

First we note that under $\mathbf{H}^{\mathrm{diss}}_A$, $\mathbf{H}^{\mathrm{Lip}_b}_{a,b,c}$ $\mathbf{H}_{\nu,p}$, equation
\eqref{e:1} admits a unique non-explosive strong solution for each initial value $x\in\bR^d$ (see the argument in Section \ref{s:3}).

SDEs of the form \eqref{e:1} attracted considerable attention in the study of dissipative systems driven by
$\alpha$-stable L\'evy processes, see
\cite{ChechkinGKMT-02,
ChechkinGKM-04,
ChechkinGKM-05,
DybGudHng07,
dybiec2010stationary,
dybiec2023escape}
or non-linear randomly perturbed friction processes
\cite{Lindner2007diffusion, Lindner2008diffusion,Lindner2010diffusion,KP-19}.
In financial mathematics, such SDEs are also used for the simulation
of energy prices, see
\cite{borovkova2006modelling,borovkova2009modeling}.

Since the generator of the Markov process defined by \eqref{e:1} is a non-local operator, it is in general difficult to
determine analytic expressions for characteristics of $X$ like moments,
first exit probabilities etc. Therefore, the numerical approximation of $X$ is of utmost importance in applications.

Note that in SDE \eqref{e:1}, the superlinear drift $A$ and the bounded drift $a$ can be combined into an ``effective drift"
$\widetilde A:= A+a$, which also satisfies the condition $\mathbf{H}^\mathrm{diss}_A$.
Therefore, without loss of generality, we could have assumed that $a\equiv 0$.
However, in certain situations it will be convenient to have the option of splitting the ``effective drift'' $\widetilde A$
into two components by isolating a globally Lipschitz continuous and bounded part, as illustrated in Example \ref{ex:Aa} below.

The most straightforward method for approximating solutions of the SDE \eqref{e:1} is the explicit Euler scheme.
For $T\in[0,\infty)$, let
$[0,T]$ be a fixed time interval. We consider a family of equidistant partitions $\{t^n_k,\ k=0,\ldots,n\}_{n\in\mathbb N}$ with
\ba
t^n_k& =\frac{k}{n}T,\quad k=0,\ldots,n,\\
\ea
and the step size
\ba
h_n &= t^n_{k+1}- t^n_k= \frac{T}{n}.
\ea
Then the explicit Euler scheme for \eqref{e:1} reads
\ba
\label{e:Euler}
X^{\mathrm{E},n}_{0}&=x,\\
X^{\mathrm{E},n}_{t_{k+1}^n}&=X^{\mathrm{E},n}_{t_{k}^n} +\widetilde A( X^{\mathrm{E},n}_{t_{k}^n}) h_n
+ b(X^{\mathrm{E},n}_{t_{k}^n}) ( B_{t_{k+1}^n} - B_{t_{k}^n})
+ c(X^{\mathrm{E},n}_{t_{k}^n}) ( Z_{t_{k+1}^n} - Z_{t_{k}^n}),\quad k=0,\dots,n-1.
\ea
For $c\equiv 0$, i.e., in the absence of jumps, the SDE
\eqref{e:1} with the superlinearly
increasing drift
and its Euler approximations
have been extensively studied in the literature, see, e.g.,
\cite{gyongy1996existence,gyongy1998note,krylov1999kolmogorov}. It is well known that \eqref{e:1}
posesses a unique strong solution $X$ such that for each $r\in(0,\infty)$ and any $T\in[0,\infty)$ the $r$-th maximal
moment of $X$ is finite,
$\E\sup_{t\in[0,T]}\|X_t\|^r<\infty$, see, e.g., Section 2.3 in \cite{Mao2007}.

The Euler scheme $X^{\mathrm{E},n}$
converges in probability
to the true solution uniformly in $t$ on bounded intervals,
namely, for each fixed initial value $x\in\bR^d$, any $T\in[0,\infty)$ and any $\e>0$
\ba
\label{e:ucp}
\lim_{n\to\infty}\P\Big(\max_{0\leq kh\leq T}\|X^{\mathrm{E},n}_{kh} - X_{kh} \|>\e\Big)=0,
\ea
see Theorem 2.4 in \cite{gyongy1996existence}.

However in practice, computer implementations of the explicit Euler scheme \eqref{e:Euler}
$X^{\mathrm{E},n}$ may explode in finite time and yield NaN (``Not a Number'') outputs with positive probability.
Moreover, as demonstrated in Theorem 2.1 in \cite{hutzenthaler2011strong},
the individual absolute moments of Euler approximations
diverge to infinity, i.e., $\lim_{n\to\infty}\E\| X^{\mathrm{E},n}_{T}\|^r= \infty$.

In the continuous case, $c\equiv 0$, the deficiencies of the explicit Euler method can be addressed in two main ways.
First, one can employ split-step backward Euler methods (or, more generally, implicit methods); see
\cite{KloPla-95,schurz2002numerical,higham2002strong}.
These methods require increased computational effort and will not be considered further in this work.
An alternative approach, proposed in \cite{hutzenthaler2012strong}, is the \emph{tamed explicit Euler scheme}:
\ba
\label{e:TEuler}
X^{\mathrm{TE},n}_{0}&=x,\\
X^{\mathrm{TE},n}_{t_{k+1}^n}&=X^{\mathrm{TE},n}_{t_{k}^n}
+\frac{\widetilde A( X^{\mathrm{TE},n}_{t_{k}^n}) h_n}{1+ \|\widetilde A( X^{\mathrm{TE},n}_{t_{k}^n})\| h_n}
+ b(X^{\mathrm{TE},n}_{t_{k}^n}) ( B_{t_{k+1}^n} - B_{t_{k}^n}),\quad k=0,\dots,n-1.
\ea
Since the drift in \eqref{e:TEuler} is bounded, the NaN-problem no longer arises.
Moreover, under an additional assumption that the superlinear part of the drift $A$ satisfies $\mathbf{H}_A^{\text{Lip}_+}$,
one has, for any $r\in(0,\infty)$,
\ba
\E\sup_{t\in[0,T]}\|X^{\mathrm{TE},n}_{kh} - X_{kh} \|^r\leq_C h_n^{r/2},
\ea
see Theorem 1.1 in \cite{hutzenthaler2012strong}.

A comprehensive analysis of various generalizations of the
tamed Euler scheme \eqref{e:TEuler} is provided in \cite{hutzenthaler2015numerical},
while further refinements appear in \cite{sabanis2013note,sabanis2016euler}.
An alternative \emph{truncated Euler--Maruyama method}
with similar convergence properties
was introduced in
\cite{mao2015truncated,mao2016convergence,li2019explicit}.

In the presence of L\'evy-driven component, i.e., when $c(\cdot)\not\equiv 0$ and under assumption
$\mathbf{H}_{\nu,p}$,
the challenges known from the continuous setting become even more pronounced.
First, the presence of (heavy) jumps causes NaN outputs from the explicit Euler scheme to arise more frequently than in the Gaussian case; see Example \ref{ex:main}.
Second, unlike in the Gaussian setting, neither the driving L\'evy process $Z$, nor the true solution $X$
generally possess all absolute moments. The difficulties can be
illustrated by the example of the tamed explicit Euler scheme
that has been adapted to the L\'evy case in \cite{dareiotis2016tamed,kumar2016tamed,kumar2017explicit}.
Suppose,
for definiteness, that $c_*:=\inf_{x\in\bR^d}\|c(x)\|>0$, and let $q\in (p,\infty)$ be such that
$\int_{\|z\|>1}\|z\|^q\nu(\di z)=+\infty$. Since the drift in the tamed Euler scheme
is bounded from above by $1$,
the conditional expectation of the scheme's $q$-th moment diverges:
\ba
\label{e:tamedinf}
\E\Big[\|X^{\mathrm{TE},n}_{t_{k+1}^n}\|^q\Big|\cF_{t_{k}^n}\Big]
&\geq_C
-\|X^{\mathrm{TE},n}_{t_{k}^n}\|^q -1 - \|b\|^q \E\|  B_{t_{k+1}^n} - B_{t_{k}^n}\|^q
+c_*^q \E \|Z_{t_{k+1}^n} - Z_{t_{k}^n}\|^q\\
&\geq_C
-\|X^{\mathrm{TE},n}_{t_{k}^n}\|^q -1 + \E \|Z_{t_{k+1}^n} - Z_{t_{k}^n}\|^q
=+\infty \quad \mathrm{a.s.}
\ea
This explains why the works \cite{dareiotis2016tamed,kumar2016tamed,kumar2017explicit}
consider $p$-integrable L\'evy noises  with
$p\in[2,\infty)$ and establish
convergence in the $L^r$-sense for $r\in (0,p)$ only, see, e.g., Theorem 1 in \cite{kumar2017explicit}. The same restrictions hold for
the truncated Euler--Maruyama or adaptive schemes, see, e.g.,
\cite{deng2019truncated,kieu2022strong,kelly2025strong}.

We emphasize that in the presence of heavy-tailed jump noise,
the tail and moment structure of the solution $X$
becomes an inherent feature of the system. We now state the following result,
which will be proved in Section~\ref{s:3}.

\begin{thm}
\label{t:Xpdiss}
Let assumptions $\mathbf{H}^\mathrm{diss}_A$,
$\mathbf{H}_{a,b,c}^{\mathrm{Lip}_b}$, and $\mathbf{H}_{\nu,p}$ hold. Then, for each $p_X\in(0,p+\kappa-1)$ the
solution $X$ to \eqref{e:1} satisfies
\ba
\label{e:estXpX}
\sup_{t\in[0,\infty)}\E \|X_t\|^{p_X}\leq_C 1+\|x\|^{p_X},\quad x\in\bR^d.
\ea
\end{thm}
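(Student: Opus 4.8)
The plan is to estimate the growth of $\E\|X_t\|^{p_X}$ via an It\^o formula argument applied to a smoothed version of $x\mapsto \|x\|^{p_X}$, exploiting the superlinear dissipativity of $A$ to dominate all other contributions. First I would fix $p_X\in(0,p+\kappa-1)$ and, since $p_X$ may be small, introduce the $C^2$ surrogate $V(x)=(1+\|x\|^2)^{p_X/2}$, so that $V(x)\asymp 1+\|x\|^{p_X}$ and $\nabla V$, $\Hess V$ have the expected polynomial decay: $\|\nabla V(x)\|\lesssim (1+\|x\|)^{p_X-1}$ and $\|\Hess V(x)\|\lesssim (1+\|x\|)^{p_X-2}$. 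Applying It\^o's formula for semimartingales with jumps to $V(X_t)$ and taking expectations (after a standard localisation by stopping times $\tau_R=\inf\{t:\|X_t\|\ge R\}$, to be removed at the end by Fatou together with the a priori finiteness of moments up to some order), I would obtain
\ba
\E V(X_{t\wedge\tau_R}) = V(x) + \E\int_0^{t\wedge\tau_R}\!\big(\cL V\big)(X_s)\,\di s,
\ea
where $\cL$ is the generator of \eqref{e:1}; the martingale parts (the $\di B_s$ stochastic integral and the compensated-jump part) vanish in expectation on $[0,t\wedge\tau_R]$.

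Next I would bound $\cL V(x)$ termwise. The drift term splits as $\langle \nabla V(x), A(x)\rangle + \langle \nabla V(x), a(x)\rangle$; writing $\nabla V(x) = p_X (1+\|x\|^2)^{p_X/2-1} x$ and invoking $\mathbf{H}^\mathrm{diss}_A$ gives $\langle \nabla V(x),A(x)\rangle \le p_X(1+\|x\|^2)^{p_X/2-1}\big(-C_1\|x\|^{1+\kappa}+C_2\big)$, which for large $\|x\|$ behaves like $-c\,\|x\|^{p_X-1+\kappa}$ — a \emph{negative} term of order $p_X+\kappa-1$. The bounded Lipschitz drift $a$ contributes at most $\lc (1+\|x\|)^{p_X-1}$; the Brownian part contributes $\tfrac12\,\mathrm{tr}\big(b(x)b(x)^\top \Hess V(x)\big)$ which, by boundedness of $b$, is $\lc (1+\|x\|)^{p_X-2}$. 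The delicate term is the jump part
\ba
\int_{\bR^d}\Big(V\big(x+c(x)z\big)-V(x)-\langle \nabla V(x),c(x)z\rangle\bI(\|z\|\le 1)\Big)\nu(\di z).
\ea
For $\|z\|\le 1$, a second-order Taylor estimate together with $\|\Hess V\|\lesssim(1+\|x\|)^{p_X-2}$ and boundedness of $c$ gives a bound $\lc (1+\|x\|)^{p_X-2}\int_{\|z\|\le1}\|z\|^2\nu(\di z)$. For $\|z\|>1$ I would use the elementary inequality $|V(x+y)-V(x)|\lesssim (1+\|x\|)^{p_X-1}\|y\| + \|y\|^{p_X}$ (splitting according to whether $p_X\le1$ or $p_X>1$, and using subadditivity of $t\mapsto t^{p_X}$ in the former case), so that with $\|c(x)z\|\lc\|z\|$ the large-jump integral is $\lc (1+\|x\|)^{p_X-1}\int_{\|z\|>1}\|z\|\,\nu(\di z) + \int_{\|z\|>1}\|z\|^{p_X}\nu(\di z)$. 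Since $p_X<p+\kappa-1$ we have in particular $p_X<p$ once $\kappa$ is... no — more carefully, the exponents $p_X-1$ and $p_X$ appearing here are both strictly below $p_X+\kappa-1$ (as $\kappa>1$), and by $\mathbf{H}_{\nu,p}$ combined with $\int(\|z\|^2\wedge1)\,\nu(\di z)<\infty$ the relevant moments of $\nu$ are finite provided $p_X\le p$; the case $p<p_X<p+\kappa-1$ needs $\|z\|^{p_X}$ replaced by interpolation $\|z\|^{p}\,(1\vee\|z\|)^{p_X-p}$ — here is where the structure bites and I would instead bound the large-jump contribution by $\lc (1+\|x\|)^{p_X-p}\int_{\|z\|>1}(\ldots)$ type terms; in all cases the resulting power of $\|x\|$ is $\le p_X-1$ or, at worst, strictly less than $p_X+\kappa-1$.

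Collecting the estimates, I get $\cL V(x)\le -c_1\|x\|^{p_X+\kappa-1} + c_2(1+\|x\|)^{\max(p_X-1,\,p_X-p+\text{something})} + c_3 \le -c_1' V(x)^{(p_X+\kappa-1)/p_X}+c_2'$, i.e., a drift condition of the form $\cL V \le -c_1' V^{1+\theta}+c_2'$ with $\theta=(\kappa-1)/p_X>0$ — in fact even the cruder $\cL V\le -c_1'' V + c_2''$ suffices. Plugging into the localised identity and applying Gr\"onwall (or directly the comparison for $u'(t)\le -c_1'' u(t)+c_2''$) yields $\E V(X_{t\wedge\tau_R})\le V(x)\ex^{-c_1''t}+\tfrac{c_2''}{c_1''}(1-\ex^{-c_1''t})\le V(x)+\tfrac{c_2''}{c_1''}$, uniformly in $t$ and $R$; letting $R\to\infty$ via Fatou and translating back through $V(x)\asymp 1+\|x\|^{p_X}$ gives \eqref{e:estXpX}. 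The main obstacle I anticipate is the bookkeeping for the large-jump integral when $p_X$ exceeds $p$: one must verify that multiplicativity of $c$ (its boundedness) together with the gap $\kappa-1$ between $p_X$ and $p$ genuinely lets the dissipative term absorb a term of order $\|x\|^{p_X}$ (coming from $\|c(x)z\|^{p_X}\lesssim\|z\|^{p_X}$ when $c$ is merely bounded, not small) — the point being that $\int_{\|z\|>1}\|z\|^{p}\nu(\di z)<\infty$ controls a polynomially-in-$x$-bounded multiple, and the excess power $p_X-p<\kappa-1$ is strictly dominated by the $\|x\|^{\kappa-1}$ gain from dissipativity; making this trade-off precise (and uniform in $x$) is the crux of the argument.
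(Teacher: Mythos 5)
The core of Theorem \ref{t:Xpdiss} is the range $p_X\in(p,p+\kappa-1)$, and there your argument breaks at exactly the point you flag as ``bookkeeping'': the generator inequality you aim for ($\cL V\le -c_1'V^{1+\theta}+c_2'$, or even $\cL V\le -cV+C$) with $V(x)=(1+\|x\|^2)^{p_X/2}$ is simply false, because the large-jump part of $\cL V$ is identically $+\infty$ in that regime. If $\int_{\|z\|>1}\|z\|^{p_X}\nu(\di z)=\infty$ (the generic situation under $\mathbf{H}_{\nu,p}$ when $p_X>p$; e.g.\ the Cauchy case of Example \ref{ex:main}, $c\equiv1$, $p_X\ge1$) and $c(x)\neq0$, then for $\|z\|\gg\|x\|$ one has $V(x+c(x)z)-V(x)\asymp\|c(x)z\|^{p_X}$, which is bounded below by a multiple of $\|z\|^{p_X}$ \emph{uniformly in $x$} whenever $\|c\|$ is bounded away from zero, so $\int_{\|z\|>1}\bigl(V(x+c(x)z)-V(x)\bigr)\nu(\di z)=+\infty$ for every $x$. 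Since $c$ is only bounded, not decaying, the interpolation you sketch ($\|z\|^{p_X}\le\|z\|^{p}(1\vee\|z\|)^{p_X-p}$, hoping to convert the excess into a factor $(1+\|x\|)^{p_X-p}$) has no mechanism to produce a power of $\|x\|$: the divergence is $x$-independent, and the dissipative term $-c\|x\|^{p_X+\kappa-1}$, however strong, cannot absorb an infinite, $x$-free quantity. (A smaller instance of the same issue: your bound $(1+\|x\|)^{p_X-1}\int_{\|z\|>1}\|z\|\,\nu(\di z)$ already presumes $p\ge1$.) For $p_X\le p$ your computation is essentially sound, but that part carries none of the tail-improving content of the theorem.

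The gain of $\kappa-1$ moments is a dynamical effect, invisible in a pointwise bound on $\cL V$: heuristically, big jumps arrive at a finite Poisson rate, and a jump to level $R$ is damped by the $\kappa$-dissipative flow back to $O(1)$ within a time of order $R^{-(\kappa-1)}$, so at a fixed time $t$ one has $\P(\|X_t\|>R)\lesssim R^{-(\kappa-1)}\,\nu(\|z\|>R)$, giving moments up to order $p+\kappa-1$; it is rarity in \emph{time}, not smallness of $\cL V$ at a point, that produces \eqref{e:estXpX}. Accordingly, the paper does not run a Lyapunov computation at all: it obtains local existence and uniqueness from Theorem 6.2.11 of \cite{Applebaum-09} and then invokes the general moment bound for dissipative It\^o semimartingales with heavy-tailed jumps, Theorem 2.12 of \cite{KP21}, which encapsulates precisely this jump/relaxation trade-off (and is later reused, with the scheme written in canonical semimartingale form, to prove Theorem \ref{t:Xnpdiss}). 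To make your route work you would essentially have to reprove that result, e.g.\ by conditioning on the big-jump times and exploiting the relaxation of the flow between them, rather than refining Taylor estimates on $\cL V$.
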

This theorem reveals a general phenomenon:
a $\kappa$-dissipative drift transforms the $p$-moment of the noise into (approximately) a $(p + \kappa-1)$-moment of the
solution $X$.
This effect has been studied extensively; for a discussion of the relevant literature,
see Section~1 in \cite{KP21}. In particular, it is known that the upper bound $p + \kappa-1$ is sharp, see \cite[Section~2.6]{KP21}.

The entire line of research presented in this paper is motivated by the following natural question:
\emph{How can one construct an explicit numerical approximation scheme for solutions of \eqref{e:1}
that does not explode and captures (respects) the tail behavior of the true solution $X$,
such that the moments of the numerical approximation converge to the corresponding finite moments of $X$?}

Addressing this question is far from straightforward, as intuitive candidates ---
such as the explicit Euler or the tamed/truncated explicit Euler schemes ---
fail to accurately reproduce the moment behaviour in the range $p_X\in (p, p+\kappa-1)$.

In this paper, we propose a novel explicit approximation scheme, which is \emph{tail-respecting}
in the sense described above.
The scheme combines the Lie--Trotter operator-splitting method for semigroup approximation with an
Euler-type discretization. For background on operator splitting and related product formula approximations,
we refer the reader to the classical works \cite{trotter1959product,strang1968construction,chernoff1974product}
and the recent review \cite{vovchanskyi2024quick}.

More precisely, the solution $X$ is approximated in two steps by decoupling
the deterministic dynamics governed by the superlinearly growing vector field $A$
from the remaining components. The resulting scheme is as follows.

Let $\Phi=\Phi(t,x)$, $t\in [0,\infty)$, $x\in\bR^d$, be the unique solution of the ODE
\ba
\label{e:ODE}
\dot\Phi(t,x)&=A(\Phi(t,x)),\\
\Phi(0,x)&=x,
\ea
see Section \ref{s:Phi} for the properties of the mapping $\Phi$.
Then we set
\ba
\label{e:Xndisc}
X_0^n&:=x,\\
Y^n_{t^n_{k+1}}&:=X^n_{t^n_{k}} + a(X^n_{t^n_{k}})h_n + b(X^n_{t^n_{k}})( B_{t^n_{k+1}} - B_{t^n_{k}})
+ c(X^n_{t^n_{k}})( Z_{t^n_{k+1}} - Z_{t^n_{k}}),\\
X^n_{t^n_{k+1}}&:= \Phi(h_n,  Y^n_{t^n_{k+1}}),\quad k=0,\ldots,n-1.
\ea
As we can see, our scheme  deviates slightly from the Lie--Trotter splitting.
In the latter, the mapping $\Phi$
is composed with the terminal value $Y_{t_{k+1}^n}$ of the solution $Y$ to the SDE
\ba
\label{e:Y}
Y_t&:= X^n_{t^n_{k}} +  \int_{t^n_{k}}^t a(Y_s)\, \di s +  \int_{t^n_{k}}^t b(Y_s )\,\di B_s +  \int_{t^n_{k}}^t c(Y_{s-})\, \di Z_s,
\quad t\in[ t^n_{k},t^n_{k+1}].
\ea
Instead, in \eqref{e:Xndisc} we
employ the one-step Euler approximation of \eqref{e:Y}. Therefore,
the scheme \eqref{e:Xndisc} can be called the explicit
Lie--Trotter--Euler scheme. For brevity, we will refer to \eqref{e:Xndisc} and its modifications as the
splitting scheme.

The practical implementation of the scheme \eqref{e:Xndisc} requires knowledge of the
function $\Phi$. In the most favourable case, the ODE \eqref{e:ODE} can be solved explicitly;
see the examples in Section \ref{s:sim} and Section VI of \cite{pavlyukevich2025simulate}.
Otherwise, $\Phi$ must itself be approximated using a deterministic ODE solver
(see, e.g., \cite{butcher2000numerical,hairer2015numerical} and references therein).
We emphasize, however, that the analysis in the present paper is carried out for the \emph{exact} flow $\Phi$
and relies on the properties established in Section~2. The effect of replacing $\Phi$ by a numerical
approximation is beyond the scope of this work. We return to this issue in Example~\ref{exa:Lorenz-radial}.

As is common in such cases, it will be more convenient to work with
the \emph{continuous time} version of \eqref{e:Xndisc}, which coincides with \eqref{e:Xndisc} at the partition points.  We denote
\ba
\eta_0^n&=0,\\
\eta_t^n&:= t^n_k,\quad t\in  (t^n_k,t^n_{k+1}],\quad k\in\bN_0,
\ea
and set
\ba
\label{e:ns}
\delta^n_t:=t-\eta_t^n,\quad
B^n_t:=B_t - B_{\eta^n_t},\quad
Z^n_t:=Z_t - Z_{\eta^n_t}\quad
\text{and}\quad X^{(n)}_t:=X^n_{\eta^n_t}.
\ea
The continuous time version of \eqref{e:Xndisc} is then given by:
\ba
\label{e:Xncont}
X_0^n&:=x,\\
Y^n_{t}&:= X^{(n)}_t + a(X^{(n)}_t) \delta^n_t + b(X^{(n)}_t)B^n_t
+ c(X^{(n)}_t) Z^n_t,\\
X^n_t&:=\Phi (\delta_t^n,Y^n_t ),\quad t\in[0,T].
\ea

\medskip

Our first main result is the following Theorem that will be proved in Section~\ref{s:3}.
\begin{thm}
\label{t:Xnpdiss}
Let assumptions
$\mathbf{H}^\mathrm{diss}_A$,
$\mathbf{H}_{a,b,c}^{\mathrm{Lip}_b}$,
$\mathbf{H}_{\nu,p},$
$\mathbf{H}_A^{\mathrm{Lip}_+}$, and
$\mathbf{H}_{A_x,A_{xx}}$,
hold.
Then, for any $T\in[0,\infty)$ and any
$p_X\in (0, p+\kappa-1)$
\ba
\label{e:extXn}
\sup_{n\in\bN}\sup_{t\in[0,\infty)} \E \|X^n_t \|^{p_X}\leq_C 1+\|x\|^{p_X},\quad x\in\bR^d.
\ea
\end{thm}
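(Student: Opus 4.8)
The plan is to show that the quantity $V_n(t):=\E\|X^n_t\|^{p_X}$ satisfies a Gronwall-type inequality uniform in $n$, by tracking one step of the scheme and summing. The central idea is that the superlinear drift is not applied as an increment $A(X^n_{t^n_k})h_n$ (which would create moment blow-up as for the Euler scheme) but through the exact flow $\Phi(h_n,\cdot)$, whose dissipativity (established in the forthcoming Section \ref{s:Phi}) should provide, for the exact ODE flow, a contraction-type estimate of the shape $\|\Phi(h,y)\|^{1+\kappa}\le \|y\|^{1+\kappa} - c\, h\,\|y\|^{1+\kappa} + C h$ for small $h$, or more conveniently a bound like $\|\Phi(h,y)\|^{p_X}\le \ex^{Lh}\|y\|^{p_X}$ together with the stronger dissipative gain $\|\Phi(h,y)\|^{p_X}\le \|y\|^{p_X} - c_0 h\,\|y\|^{p_X+\kappa-1} + C_0 h$. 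First I would record the needed properties of $\Phi$: global existence (from $\mathbf{H}^\mathrm{diss}_A$ and $\mathbf{H}_A^{\mathrm{Lip}_+}$), the one-sided Lipschitz bound $\|\Phi(t,x)-\Phi(t,y)\|\le \ex^{Lt}\|x-y\|$, and the dissipative decay of $\|\Phi(t,x)\|$.

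Next I would analyze the Euler sub-step $Y^n_{t^n_{k+1}} = X^n_{t^n_k} + a(X^n_{t^n_k})h_n + b(X^n_{t^n_k})\Delta B + c(X^n_{t^n_k})\Delta Z$. Conditionally on $\cF_{t^n_k}$, write $\|Y^n_{t^n_{k+1}}\|^{p_X}$ and expand. Because $a,b,c$ are bounded (assumption $\mathbf{H}_{a,b,c}^{\mathrm{Lip}_b}$) and $\Delta Z$ has a finite $p$-th moment (assumption $\mathbf{H}_{\nu,p}$), with $p_X<p+\kappa-1$, the increments contribute at worst a term of order $h_n\|X^n_{t^n_k}\|^{p_X-1}$ plus a constant of order $h_n$ plus the $\|\Delta Z\|^{p_X}$ term, which is bounded only if $p_X\le p$; to cover the full range $p_X<p+\kappa-1$ one must not estimate $\E\|Y^n\|^{p_X}$ alone but combine the Euler step with the $\Phi$-step. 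Concretely, apply $\Phi(h_n,\cdot)$ to $Y^n$: using $\|\Phi(h_n,y)\|^{p_X}\le (1+Lh_n)\|y\|^{p_X}\wedge(\|y\|^{p_X}-c_0h_n\|y\|^{p_X+\kappa-1}+C_0h_n)$ and splitting on whether $\|Y^n_{t^n_{k+1}}\|$ is large or small, I would obtain
\ba
\label{e:stepbound}
\E\big[\|X^n_{t^n_{k+1}}\|^{p_X}\,\big|\,\cF_{t^n_k}\big]
\le \|X^n_{t^n_k}\|^{p_X} - c_0 h_n \|X^n_{t^n_k}\|^{p_X+\kappa-1} + C h_n\big(1+\|X^n_{t^n_k}\|^{p_X-1}\big) + C h_n,
\ea
where the crucial cancellation is that the dissipative gain $c_0 h_n\|Y^n\|^{p_X+\kappa-1}$ from the flow dominates the potentially large moment input $\E[\|c(X^n_{t^n_k})\Delta Z\|^{p_X}\mid\cF_{t^n_k}]$ whenever $p_X<p+\kappa-1$ — here the interpolation $\|z\|^{p_X}\le \epsilon\|z\|^{p_X+\kappa-1}+C_\epsilon\|z\|^{p}$ on $\{\|z\|>1\}$ (valid since $p_X<p+\kappa-1$ and $p_X>p$ is the hard regime) together with $\int_{\|z\|>1}\|z\|^p\nu(\di z)<\infty$ is what makes the bad term absorbable. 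This step is the main obstacle: the flow dissipation acts on $\|Y^n\|$, not on $\|X^n_{t^n_k}\|$, so one has to carefully transfer the estimate through the (random) Euler displacement while keeping all constants independent of $n$ and $k$, and handle separately the event where the single jump increment $\Delta Z$ is atypically large.

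Finally, taking expectations in \eqref{e:stepbound}, dropping the nonnegative dissipative term (or keeping it if a sharper bound is desired), and using Young's inequality on $\|X^n_{t^n_k}\|^{p_X-1}\le \|X^n_{t^n_k}\|^{p_X}+1$, I get $V_n(t^n_{k+1})\le (1+Ch_n)V_n(t^n_k)+Ch_n$, hence by discrete Gronwall $V_n(t^n_k)\le \ex^{CT}(\|x\|^{p_X}+1)$ — but this is not yet uniform in $T$. To obtain the bound uniform over $t\in[0,\infty)$ as stated, I would instead retain the dissipative term: \eqref{e:stepbound} combined with Young gives, for some $\theta,C>0$, $\E[\|X^n_{t^n_{k+1}}\|^{p_X}\mid\cF_{t^n_k}]\le (1-\theta h_n)\|X^n_{t^n_k}\|^{p_X}+Ch_n$ whenever $\|X^n_{t^n_k}\|$ exceeds a fixed threshold, and is trivially controlled below the threshold; iterating this geometric-decay recursion yields $\sup_k V_n(t^n_k)\le C(1+\|x\|^{p_X})$ with $C$ independent of $n$ and of $T$. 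Interpolating to non-grid times $t\in(t^n_k,t^n_{k+1}]$ via the continuous-time representation \eqref{e:Xncont} and the same one-step estimates (with $\delta^n_t\le h_n$) completes the proof of \eqref{e:extXn}. Throughout I would invoke Theorem \ref{t:Xpdiss} only as a sanity check that the target exponent range $p_X<p+\kappa-1$ is the right one; the argument for $X^n$ is self-contained modulo the properties of $\Phi$ from Section \ref{s:Phi}.
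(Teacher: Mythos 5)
Your reduction to a one-step recursion for $V_n(t^n_k)=\E\|X^n_{t^n_k}\|^{p_X}$ contains a genuine gap: the proposed one-step bound with error term $C h_n$ is false precisely in the regime $p_X\in(p,p+\kappa-1)$ that the theorem is about. Take the paper's Example \ref{ex:main} ($A(x)=-x^3$, $c\equiv 1$, $Z$ Cauchy, so $\kappa=3$ and any $p<1$) and start a step at $X^n_{t^n_k}=0$: then $X^n_{t^n_{k+1}}=\Delta Z/\sqrt{2h_n\Delta Z^2+1}$ and a direct computation gives $\E|X^n_{t^n_{k+1}}|^{p_X}\asymp h_n^{(p+\kappa-1-p_X)/(\kappa-1)}$ (e.g.\ of order $h_n^{1/4}$ for $p_X=5/2$), not $O(h_n)$. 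The same exponent comes out of your own interpolation: to absorb $\epsilon\|cz\|^{p_X+\kappa-1}$ into the flow's dissipative gain, which carries the factor $h_n$, you are forced to take $\epsilon\lc h_n$, and then $C_\epsilon\,h_n\E\|\Delta Z\|^p\asymp h_n^{1-(p_X-p)/(\kappa-1)}$. So the best scalar recursion your scheme of proof can produce is $V_{k+1}\le(1-\theta h_n)V_k+Ch_n^{\alpha}$ with $\alpha=\frac{p+\kappa-1-p_X}{\kappa-1}<1$, whose fixed point is of order $h_n^{\alpha-1}\to\infty$; keeping the dissipative term via Jensen does not repair this. The reason is structural: uniform boundedness of the $p_X$-moment for $p_X>p$ rests on the \emph{multi-step} relaxation of rare large-jump excursions (a jump to level $u$ contributes $\min(u,s^{-1/(\kappa-1)})^{p_X}$ a time $s$ later, and the time integral converges exactly when $p_X<p+\kappa-1$); once you average into the single scalar $V_k$, this conditional decay is lost, so no recursion in $\E\|X^n_{t^n_k}\|^{p_X}$ alone can close. (A secondary flaw: the pathwise estimate $\|\Phi(h,y)\|^{p_X}\le\|y\|^{p_X}-c_0h\|y\|^{p_X+\kappa-1}+C_0h$ is false for large $\|y\|$, where the right-hand side is negative; the dissipative term must be evaluated along the flow, e.g.\ at $\Phi(h,y)$.)

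The paper avoids this bookkeeping entirely: it applies the It\^o formula to the continuous-time interpolation \eqref{e:Xncont}, writes $X^n$ as an It\^o semimartingale in canonical form whose drift is $A(X^n_s)$ plus bounded corrections (using the bounds on $\Phi_x$, $\Phi_{xx}$ from Section \ref{s:Phi}), whose diffusion coefficient is bounded, and whose jump kernel is dominated by a push-forward of $\nu$ with the same small-jump and $p$-moment bounds, and then invokes the general moment estimate of Theorem 2.12 in \cite{KP21} with constants uniform in $n$. That theorem is exactly where the excursion-relaxation mechanism described above is carried out. To make your discrete, self-contained route work you would have to reproduce an analogue of that machinery — e.g.\ condition on the time elapsed since the last large jump and track the superlinear decay of the excursion over many steps — rather than a one-step Gronwall/geometric recursion.
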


Theorem \ref{t:Xnpdiss} demonstrates that the scheme
\eqref{e:Xncont} preserves the same tail-improving effect as the original SDE \eqref{e:1}.
It is important to emphasize that this property is quite delicate and is not shared
by the tamed Euler scheme or an alternative splitting scheme, which we briefly discuss below.

It is  well known, that the splitting order the in Lie--Trotter approximations can be arbitrary. Therefore,
one may also consider the reverse variant of \eqref{e:Xndisc}:
\ba
\label{e:X2}
\widehat X_0^n&:=x,\\
 \widehat Y^n_{t^n_{k+1}}&:= \Phi(h_n,  \widehat X^n_{t^n_{k}}),\\
\widehat X^n_{t^n_{k+1}}&:= \widehat Y^n_{t^n_{k+1}}
+ a( \widehat Y^n_{t^n_{k+1}})h_n
+ b( \widehat Y^n_{t^n_{k+1}})( B_{t^n_{k+1}} - B_{t^n_{k}})
+ c( \widehat Y^n_{t^n_{k+1}})(Z_{t^n_{k+1}} - Z_{t^n_{k}}),\quad k=0,\ldots,n-1.\\
\ea
The scheme \eqref{e:X2} shares the same structure as the tamed Euler scheme, in that their right-hand sides
contains summands of the form
$c( \widehat Y^n_{t^n_{k+1}})(Z_{t^n_{k+1}} - Z_{t^n_{k}})$.
Then, for any $q\in (p,\infty)$ such that $\int_{\|z\|>1}\|z\|^q\,\nu(\di z)=+\infty$,
the same calculation as in \eqref{e:tamedinf} combined with the estimate \eqref{e:Phi} for the flow $\Phi$
yields
\ba
\E\Big[\|\widehat X^{n}_{t_{k+1}^n}\|^q\Big|\cF_{t_{k}^n}\Big]
&\geq_C
-\|\widehat \Phi(h_n,  \widehat X^n_{t^n_{k}})\|^q -\|a\|^qh_n^q -  \|b\|^q\E \|B_{t^n_{k+1}} - B_{t^n_{k}}\|^q
+ c_*^q\E \|Z_{t_{k+1}^n} - Z_{t_{k}^n}\|^q\\
&\geq _C -\|\widehat X^n_{t^n_{k}}\|^q -1+ \E \|Z_{t_{k+1}^n} - Z_{t_{k}^n}\|^q
=+\infty \quad \mathrm{a.s.}
\ea
Therefore, \eqref{e:X2} does not preserve the finite moments of $X$ beyond the order allowed by the integrability of $Z$.

Moreover, neither the tamed Euler scheme, nor the scheme \eqref{e:X2} possess the necessary structure
to satisfy the analogue of Theorem \ref{t:Xnpdiss}.
The theoretical explanation for this lies in the fact that both Theorems \ref{t:Xpdiss} and \ref{t:Xnpdiss}
are derived from the general result in Theorem~2.12 of \cite{KP21},
which provides sharp bounds on the tails of It\^o semimartingales with dissipative drifts and heavy-tailed jumps.
From this perspective, the scheme \eqref{e:Xndisc}, \eqref{e:Xncont}
is the only one among those discussed that, while formulated in the canonical semimartingale form,
aligns with the framework of Theorem~2.12 in \cite{KP21}.

We analyze the rate of convergence of the scheme \eqref{e:Xndisc}, \eqref{e:Xncont}
in two steps. The first step focuses on the $L^q$-convergence rates for $q\in(0,p)$.
This step does not require $\kappa$-dissipativity and does not involve the tail-improving effect.

\begin{thm}
\label{t:thm_q}
Let assumptions
$\mathbf{H}^{\mathrm{Lip}_+}_A$,
$\mathbf{H}_{A_x,A_{xx}}$,
$\mathbf{H}_{a,b,c}^{\mathrm{Lip}_b}$, and
$\mathbf{H}_{\nu,p}$
 hold. Then,
 for any $T\in[0,\infty)$ and $q\in (0,p)$ there is convergence
\ba
\sup_{t\in[0, T]} \E \|X_t^n-X_t\|^q \leq_C h_n^{\bar \delta}(1+\|x\|^p),\quad x\in\bR^d,
\ea
with the convergence rate
\ba
\bar\delta=\frac{p-q}{\chi}\wedge\frac{q}{2}\wedge 1.
\ea
In particular, if $\chi=0$, i.e., if the drift $A$ has bounded first derivatives, the convergence rate reduces to
\ba
\bar\delta=\frac{q}{2}\wedge 1.
\ea
\end{thm}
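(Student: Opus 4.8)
\emph{Strategy.} I would compare $X^n$ with $X$ directly through the error process $e^n_t:=X^n_t-X_t$, establishing a Gronwall inequality for $\E(\eps+\|e^n_t\|^2)^{q/2}$ and letting $\eps\downarrow 0$ at the end (for $p>2$ and $q\in[2,p)$ one may work with $\|e^n_t\|^q$ directly). The point of assuming $\mathbf H_A^{\mathrm{Lip}_+}$ is exactly to make this work: among the drift contributions to $\di(\eps+\|e^n\|^2)^{q/2}$ only $\langle\nabla(\eps+\|e^n\|^2)^{q/2},A(X^n)-A(X)\rangle$ is not a small ``source'', and $\mathbf H_A^{\mathrm{Lip}_+}$ bounds it by $qL(\eps+\|e^n\|^2)^{q/2}$, so the Gronwall constant is $e^{qLT}$ and never sees the local Lipschitz constant of $A$ (which grows like $\|x\|^\chi$). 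Note that $\kappa$-dissipativity is deliberately not used here, so only moments of $X$, $X^n$, $Y^n$ up to order $p$ are available.

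\emph{Step 1 (preliminaries).} From Section~\ref{s:Phi} I use the flow bounds implied by $\mathbf H_A^{\mathrm{Lip}_+}$ and $\mathbf H_{A_x,A_{xx}}$: $\|\Phi(t,y)\|\lc e^{Ct}(1+\|y\|)$, $\|\Phi_x(t,y)\|\le e^{Lt}$, and --- each in a form carrying both a ``natural'' factor and a crude cap --- $\|\Phi_x(t,y)-\Id\|\lc\big(t(1+\|y\|^\chi)\big)\wedge1$, $\|\Phi_{xx}(t,y)\|\lc\big(t(1+\|y\|^\chi)\big)\wedge1$, and $\|\Phi(t,y)-y\|\lc\big(t(1+\|y\|^{1+\chi})\big)\wedge(1+\|y\|)$, the uniform cap on $\Phi_{xx}$ being precisely what $\mathbf H_{A_x,A_{xx}}$ secures. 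Since $\mathbf H_A^{\mathrm{Lip}_+}$ gives $\langle A(x),x\rangle\lc1+\|x\|^2$ and $a,b,c$ are bounded, Itô's formula and Gronwall produce $\sup_{t\le T}\E\|X_t\|^r+\sup_n\sup_{t\le T}\E\|X^n_t\|^r\lc1+\|x\|^r$ for all $r\in(0,p]$; combined with the elementary estimate $\E\|Z_t\|^r\lc t^{(r/2)\wedge1}$ ($t\le1$, $r\le p$) and a truncation of the event $\{\|X^n_{\eta^n_t}\|>h_n^{-1/\chi}\}$ applied to the displacement bound for $\Phi$, this yields the one-step estimate $\E\|X^n_t-X^n_{\eta^n_t}\|^q\lc h_n^{\delta_q}(1+\|x\|^p)$, which already exhibits the exponents $q/2$ (Brownian increment) and $(p-q)/\chi$ (flow displacement).

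\emph{Step 2 (error inequality and source term).} Applying Itô's formula to $X^n_t=\Phi(\delta^n_t,Y^n_t)$, with $\partial_s\Phi(s,y)=A(\Phi(s,y))$, gives $\di X^n_t=A(X^n_t)\,\di t+r^n_t\,\di t+\sigma^n_t\,\di B_t+(\text{jump integrals against }Z)$, where the stiff drift reproduces exactly $A(X^n_t)$, $\sigma^n_t=\Phi_x(\delta^n_t,Y^n_t)b(X^n_{\eta^n_t})$, the jump associated with a jump $z$ of $Z$ equals $g^n_t(z)=\Phi(\delta^n_t,Y^n_{t-}+c(X^n_{\eta^n_t})z)-\Phi(\delta^n_t,Y^n_{t-})$, and $r^n_t$ gathers $\Phi_xa$, the $\tfrac12\mathrm{Tr}[\Phi_{xx}bb^{\!\top}]$ term and the small-jump compensator correction. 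Subtracting the equation for $X$, applying Itô to $(\eps+\|e^n_t\|^2)^{q/2}$, using $\mathbf H_A^{\mathrm{Lip}_+}$ as above, $\mathbf H_{a,b,c}^{\mathrm{Lip}_b}$ for $b(X^n)-b(X)$, $c(X^n)-c(X)$, $a(X^n)-a(X)$, the elementary inequality $(\eps+\|e\|^2)^{q/2-1}\|w\|^2\lc\|w\|^q+(\eps+\|e\|^2)^{q/2}$, and treating the second-order small-jump terms through $\int_{\|z\|\le1}\|z\|^2\,\nu(\di z)<\infty$ (so that $\int\|z\|^q\,\nu(\di z)$, possibly infinite, is never needed), I obtain $\E(\eps+\|e^n_t\|^2)^{q/2}\lc\eps^{q/2}+\int_0^t\E(\eps+\|e^n_s\|^2)^{q/2}\,\di s+\int_0^t\E G^n_s\,\di s$, whence Gronwall and $\eps\downarrow0$ give $\sup_{t\le T}\E\|e^n_t\|^q\lc\int_0^T\E G^n_s\,\di s$. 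The source $G^n_s$ splits into three kinds. (i) Terms with a factor $\|X^n_{\eta^n_s}-X^n_s\|$ (or $\|X^n_{\eta^n_s}-X^n_{s-}\|$) contribute $\lc h_n^{\delta_q}(1+\|x\|^p)$ by Step~1, supplying the exponents $q/2$ and $1$. (ii) Terms with a factor $\Phi_x(\delta^n_s,Y^n_s)-\Id$ or $\Phi_{xx}(\delta^n_s,Y^n_s)$ are estimated directly by $h_n^q(1+\|x\|^p)$ when $\chi q\le p$, and, when $\chi q>p$, by splitting on $\{\|Y^n_s\|>h_n^{-1/\chi}\}$ (crude cap on the large set, $p$-th moment on the small set) to give $\lc h_n^{(p/\chi)\wedge q}(1+\|x\|^p)$. (iii) The jump mismatch satisfies $\|g^n_s(z)-c(X^n_{s-})z\|\lc\big(\|X^n_{\eta^n_s}-X^n_{s-}\|+h_n(1+\|Y^n_s\|^\chi)\big)\|z\|+h_n\|z\|^{1+\chi}$, and also $\lc\|z\|$ for large $z$; its new piece is handled by splitting the $\nu$-integral at $\|z\|=h_n^{-1/\chi}$, giving $h_n^q\int_{1<\|z\|\le h_n^{-1/\chi}}\|z\|^{(1+\chi)q}\,\nu(\di z)+\int_{\|z\|>h_n^{-1/\chi}}\|z\|^q\,\nu(\di z)\lc h_n^{(p-q)/\chi}$ by $\mathbf H_{\nu,p}$. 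Collecting, $\int_0^T\E G^n_s\,\di s\lc h_n^{(p-q)/\chi\wedge q/2\wedge1}(1+\|x\|^p)=h_n^{\delta_q}(1+\|x\|^p)$.

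\emph{Main obstacle.} The delicate part is Step~2(ii)--(iii). Because $\kappa$-dissipativity is not assumed, $X$, $X^n$, $Y^n$ have moments only up to order $p$, while the flow corrections $\Phi_x-\Id$, $\Phi_{xx}$ and the jump mismatch carry factors growing like $\|\cdot\|^\chi$ or $\|z\|^{1+\chi}$ against $Z$-increments with no moment beyond $p$. The exponent $(p-q)/\chi$ is exactly the price of the unavoidable truncation at the level $h_n^{-1/\chi}$, and arranging every such term to respect this threshold --- uniformly in $n$ and simultaneously with the regularization $\eps\downarrow0$ needed to cover all $q\in(0,p)$ --- is the technical core of the proof; the fact that $\mathbf H_{A_x,A_{xx}}$ yields a genuinely \emph{uniform} cap on $\Phi_{xx}$, not a polynomially growing one, is what keeps the Itô correction in $r^n$ amenable to this truncation.
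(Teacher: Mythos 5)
Your route is viable and genuinely different from the paper's. The paper never applies It\^o's formula to a fractional power of the error in the presence of the heavy jumps: it splits $Z$ into the bounded-jump part and the compound Poisson part, conditions on the number and times of the large jumps (the regular conditional probabilities $\P^{(N)}_{t_1,\dots,t_N}$), proves conditional $L^r$-estimates with $r\ge 2$ for $\|\Delta^n\|$ between consecutive large jumps (Lemma \ref{lA1Delta}), descends to $q<2$ by conditional Jensen (Corollary \ref{cA3}), treats each large jump as a discrete multiplicative event with only $q<p$ moments of $J_k$ (Lemma \ref{lA4cor}), iterates over the jump intervals via a matrix recursion giving $C^{N+1}$ factors (Lemmas \ref{cA5}, \ref{cA7}), and finally averages over the Poisson law of the jump times. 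You instead run a single global Gronwall argument on $\E(\eps+\|e^n_t\|^2)^{q/2}$, absorbing the stiff drift by $\mathbf{H}_A^{\mathrm{Lip}_+}$ and pushing all jumps through their compensators, using only $\int_{\|z\|\le 1}\|z\|^2\nu(\di z)<\infty$ and $\int_{\|z\|>1}\|z\|^q\nu(\di z)<\infty$; your source terms (one-step increments, $\Phi_x-\Id$, $\Phi_{xx}$, jump mismatch with the $h_n^{-1/\chi}$ truncation) reproduce exactly the paper's $\Lambda$-decomposition and the exponents $\frac{p-q}{\chi}$, $\frac q2$, $1$, so the rate $\delta_q$ and the factor $1+\|x\|^p$ come out the same. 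Your version is more elementary and avoids the interlacing bookkeeping; its one delicate point, which you should make explicit, is the small-jump second-order remainder for $q<2$: a generic Taylor bound with $\sup\|f_{xx}\|$ degenerates as $\eps\downarrow 0$ near $e=0$ and would force $\int_{\|z\|\le1}\|z\|^q\nu(\di z)$, so you need the one-sided concavity estimate $(\eps+\|e+D\|^2)^{q/2}-(\eps+\|e\|^2)^{q/2}\le\langle f_x(e),D\rangle+\tfrac q2(\eps+\|e\|^2)^{(q-2)/2}\|D\|^2$ together with your elementary inequality $(\eps+\|e\|^2)^{q/2-1}\|w\|^2\lc\|w\|^q+(\eps+\|e\|^2)^{q/2}$ to stay with $\|z\|^2$-integrability uniformly in $\eps$. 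The paper's conditional-$L^2$ architecture sidesteps this issue entirely and, via Burkholder--Davis--Gundy, also feeds directly into the uniform-in-time estimate of Theorem \ref{t:thm_q_sup}, which your single-time Gronwall would need extra work to deliver.
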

The proof of Theorem \ref{t:thm_q} will be given in Section \ref{s:4}.
Within the framework of this Theorem,
the uniform convergence of
$\E \sup_{t\in [0,T]}\|X_t^n-X_t\|^q$ can also be established,
although with a slightly weaker approximation rate. Specifically, we have the following result,
see Section \ref{s:5} for the
proof.
\begin{thm}
\label{t:thm_q_sup}
 Let assumptions
$\mathbf{H}^{\mathrm{Lip}_+}_A$,
$\mathbf{H}_{A_x,A_{xx}}$,
$\mathbf{H}_{a,b,c}^{\mathrm{Lip}_b}$, and
$\mathbf{H}_{\nu,p}$
 hold.
 Then, for any $T\in[0,\infty)$ and $q\in (0,p)$ there is uniform convergence
\ba
 \E \sup_{t\in[0, T]}\|X_t^n-X_t\|^q \leq_C h_n^{\delta}(1+\|x\|^p),\quad x\in\bR^d,
\ea
with the convergence rate
\ba
\delta=\frac{p-q}{\chi}\wedge\frac{q}{4}\wedge \frac{1}{2}.
\ea
\end{thm}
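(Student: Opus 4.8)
The plan is to upgrade the $\sup$-outside-the-expectation bound of Theorem~\ref{t:thm_q} to an expectation-outside-the-$\sup$ bound by a standard moment-maximal-inequality argument, at the cost of splitting the exponent in the Burkholder–Davis–Gundy and Kunita-type estimates, which explains the degradation from $\frac q2\wedge 1$ to $\frac q4\wedge\frac12$. Since $q\in(0,p)$, fix any $q'\in(q,p)$; it suffices to work with $q'$ and conclude for $q$ by Jensen/Hölder, or alternatively to work directly with $q$ and absorb the norms using $\mathbf{H}_{\nu,p}$. First I would record the error decomposition already used for Theorem~\ref{t:thm_q}: writing $e^n_t:=X^n_t-X_t$, the difference splits into (i) a drift term controlled by the one-sided Lipschitz condition $\mathbf{H}_A^{\mathrm{Lip}_+}$ together with the $C^2$-bounds $\mathbf{H}_{A_x,A_{xx}}$ on $\Phi$ from Section~\ref{s:Phi}, (ii) the globally Lipschitz bounded drift $a$, (iii) a continuous martingale term $\int_0^t(b(X^n_{\eta^n_s})-b(X_s))\,\di B_s$ plus a discretization remainder $\int_0^t(b(X^n_{\eta^n_s})-b(X^n_s))\,\di B_s$, and (iv) the analogous jump terms driven by $Z$. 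The one-sided Lipschitz structure lets one write, via the chain rule applied to $\|e^n_t\|^2$ (or $(\varepsilon^2+\|e^n_t\|^2)^{q/2}$ for fractional moments), an estimate of the form $\|e^n_t\|^q \lc \int_0^t \|e^n_s\|^q\,\di s + (\text{martingale and remainder terms})$, where the martingale terms now must be kept under the supremum.

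The key steps, in order, are: (1) apply Itô's formula to a smoothed power $V_\varepsilon(e^n_t)=(\varepsilon^2+\|e^n_t\|^2)^{q/2}$, isolating the bad superlinear drift contribution and showing it is nonpositive up to the $L$-term by $\mathbf{H}_A^{\mathrm{Lip}_+}$ and the dissipativity of $\Phi$; (2) take $\sup_{t\le T}$ and then $\E$, so that Gronwall handles the $\int_0^t\E\sup_{r\le s}\|e^n_r\|^q\,\di s$ term; (3) bound $\E\sup_{t\le T}$ of the continuous stochastic integral by BDG, which produces $\E\big(\int_0^T\|b(X^n_{\eta^n_s})-b(X_s)\|^2\,(\cdots)\,\di s\big)^{q/2}$ — here, since $q<2$, one pulls the power inside losing a factor and gets a bound in terms of $\E\sup\|e^n_s\|^{q}$ plus the discretization size $\E\|X^n_s-X^n_{\eta^n_s}\|^q$; the latter, by the structure \eqref{e:Xncont} and the local estimates on $\Phi$, is $\mathcal O(h_n^{q/2})$ from the Brownian increment, but only $\mathcal O(h_n^{q/4}\vee\cdots)$ after the BDG square-root is distributed, which is the origin of $q/4$; (4) bound the jump integral terms by the Kunita / BDG inequality for $p$-th moments of Lévy-driven integrals, using $\mathbf{H}_{\nu,p}$ and $q<p$, again splitting the exponent and producing the $\frac12$ cap (from the "small jumps / Gaussian-like" part of $\nu$) and the $h_n^{(p-q)/\chi}$ contribution coming from the growth of $A_x$; (5) collect everything, absorb the $\E\sup\|e^n\|^q$ terms for $h_n$ small, and close with Gronwall, invoking Theorem~\ref{t:Xnpdiss} to control the moments $\E\|X^n_s\|^p$ and the true-solution moments from Theorem~\ref{t:Xpdiss} wherever powers of $\|X^n_s\|$ or $\|X_s\|$ appear.

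The main obstacle I expect is step~(3)–(4): the discretization remainder $\|X^n_t-X^n_{\eta^n_t}\|$ is \emph{not} small in a pathwise or high-moment sense because $\Phi(\delta^n_t,\cdot)$ can contract a large $Y^n_t$ violently, so one cannot naively bound $\|X^n_t-X^n_{\eta^n_t}\|\le \|Y^n_t - X^n_{\eta^n_t}\| + \|\Phi(\delta^n_t,Y^n_t)-Y^n_t\|$ and expect $h_n^{1/2}$-type control with the right moment; instead one must use the dissipativity/Lipschitz bounds on $\Phi$ from Section~\ref{s:Phi} carefully, exploiting that $\Phi$ is a contraction (up to $e^{Lh_n}$) in the regime where it matters, while on the event that $\|Y^n_t\|$ is large the improved moment bound \eqref{e:extXn} keeps the tail under control. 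Reconciling the pathwise supremum with the heavy-tailed jump increments — where a single jump of $Z$ over one step can be of size comparable to the whole trajectory — is precisely why the uniform rate $\delta_q^{\mathrm{uc}}$ is strictly smaller than $\delta_q$; the delicate bookkeeping is to show that after one step this large excursion is pulled back by $\Phi$ and does not accumulate under the supremum. Once these remainder estimates are in place with the stated (weaker) powers of $h_n$, the Gronwall closure is routine.
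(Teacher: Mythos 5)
Your high-level intuition for where the loss from $\frac q2\wedge 1$ to $\frac q4\wedge\frac12$ comes from (a Burkholder--Davis--Gundy square root applied to the martingale part of the error) matches the mechanism in the paper (Lemma \ref{lA2}). However, the proposal misses the structural device that makes the whole argument work in the heavy-tailed setting, and this is a genuine gap rather than a presentational difference. The paper does \emph{not} run a single global Gronwall/BDG/Kunita argument on the full L\'evy-driven error equation. It first conditions on the compound-Poisson skeleton of large jumps (the regular conditional probabilities $\P^{(N)}_{t_1,\dots,t_N}$ of \eqref{e:cPP}), so that between consecutive big-jump times the driving noise is Brownian motion plus a \emph{bounded-jump} L\'evy process, which has moments of every order. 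This is exactly what legitimizes the BDG step: in Lemma \ref{lA2} the bound $\E\sup_t|\rmM^{n,k}_t|\lc(\E[\rmM^{n,k}]_{t_{k+1}})^{1/2}$ forces one to control quantities of order $2r\ge 4$ (the quadratic variation contains $\|\Delta^n\|^{2r-2}$ and $\|\Lambda^{c,i,n}(z)\|^{2r}$ terms, estimated by re-running Lemma \ref{lA1Delta} with $r'=2r$). Under the unconditioned heavy-tailed noise these moments simply do not exist when $2r>p$ (and the jump quadratic variation produces $\|z\|^{2q}$-type integrals against $\nu$ that diverge for $2q>p$), so your step (3)--(4), which applies BDG/Kunita directly to the full jump integral, cannot deliver the stated rate. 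The big jumps must be taken out of the martingale machinery and treated as finitely many fixed-time perturbations (the analogue of Lemma \ref{lA4cor}), then recombined interval-by-interval (Lemmas \ref{cA5}, \ref{cA7}, including the separate treatment of clustered jump times $D^n_N$, which is responsible for an extra factor $(N+1)$ in the uniform case) and finally resummed against the Poisson law of $N_T$ via \eqref{CPF}. None of this bookkeeping appears in your plan, and the concluding remark that the heavy excursions are "pulled back by $\Phi$" is not the mechanism used or needed.

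A second concrete problem is your step (5): you invoke Theorems \ref{t:Xpdiss} and \ref{t:Xnpdiss} (and at one point "dissipativity of $\Phi$") to control $\E\|X_s\|^p$ and $\E\|X^n_s\|^p$, but those results require the dissipativity hypothesis $\mathbf{H}^\mathrm{diss}_A$, which is \emph{not} among the assumptions of Theorem \ref{t:thm_q_sup} (only $\mathbf{H}^{\mathrm{Lip}_+}_A$, $\mathbf{H}_{A_x,A_{xx}}$, $\mathbf{H}_{a,b,c}^{\mathrm{Lip}_b}$, $\mathbf{H}_{\nu,p}$ are assumed). The paper obtains the needed moment input without dissipativity, again through the conditional structure: Lemma \ref{l:estXn} gives $\E^{(N)}_{t_1,\dots,t_N}[\|X^n_t\|^r\,|\,\rF_{t_k}]\lc 1+\|X^n_{t_k}\|^r$ on each inter-jump interval, and the big jumps are absorbed through \eqref{e:XJ} and $\E\|J_k\|^p<\infty$, at the price of a factor $C^{N+1}$ that is summable against the Poisson weights. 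So, as written, your argument both relies on an unavailable hypothesis and lacks the conditioning/interlacing step that makes the maximal-moment (BDG) estimates finite; filling these in essentially amounts to reconstructing the paper's proof.
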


Since Theorem \ref{t:thm_q} and \ref{t:thm_q_sup} focus solely on the moments of the L\'evy process $Z$,
we strongly believe that analogous results should hold for the tamed Euler scheme as well as the reverse splitting scheme
\eqref{e:X2}.

The following theorem, which is the second main result of the paper, is specific to the scheme
\eqref{e:Xndisc}, \eqref{e:Xncont}
and combines the tail-improving effect with convergence rates.

\begin{thm}
\label{t:P0}
Let assumptions $\mathbf{H}^\mathrm{diss}_A$, $\mathbf{H}_{a,b,c}^{\mathrm{Lip}_b}$,
$\mathbf{H}_{\nu,p},$
$\mathbf{H}_A^{\mathrm{Lip}_+}$,
and
$\mathbf{H}_{A_x,A_{xx}}$ hold.
Then, for any $T\in[0,\infty)$, any
$p_X\in [p, p+\kappa-1)$
and any
\ba
\label{e:gamma}
\gamma
<\begin{cases}
\frac{p(p+\kappa-1 - p_X)}{(\chi+2)(\kappa-1)+\chi p},&\quad p\leq \chi+2, \\
\frac{p+\kappa-1 - p_X}{\kappa+\chi-1},  &\quad  p> \chi+2,
\end{cases}
\ea
we have
\ba \sup_{t\in[0,T]}
 \E\|X^n_t-X_t\|^{p_X}
\leq_C  h_n^{\gamma }(1+\|x\|^{p+\kappa-1}),\quad x\in\bR^d.
\ea
\end{thm}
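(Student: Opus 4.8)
The plan is to interpolate between the two extreme regimes already at our disposal: the moment bound of Theorem \ref{t:Xnpdiss}, which controls $\sup_n \sup_t \E\|X^n_t\|^{p+\kappa-1-\epsilon}$ (and similarly for $X$ by Theorem \ref{t:Xpdiss}), and the low-order convergence rate of Theorem \ref{t:thm_q}, which gives $\sup_t \E\|X^n_t - X_t\|^q \leq_C h_n^{\delta_q}(1+\|x\|^p)$ for $q \in (0,p)$. The error $\|X^n_t - X_t\|^{p_X}$ with $p_X \in [p, p+\kappa-1)$ is too large to be handled directly by Theorem \ref{t:thm_q}, but it lies "between" a small power $q < p$ of the error and a high power of the individually bounded quantities $\|X^n_t\|$ and $\|X_t\|$. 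Concretely, I would fix $q \in (0,p)$ and $r \in (p_X, p+\kappa-1)$, choose $\theta \in (0,1)$ with $p_X = \theta q + (1-\theta) r$, and apply Hölder's inequality to write
\ba
\E\|X^n_t - X_t\|^{p_X} \leq \big(\E\|X^n_t - X_t\|^{q}\big)^{\theta}\,\big(\E\|X^n_t - X_t\|^{r}\big)^{1-\theta}.
\ea
The second factor is bounded uniformly in $n$ and $t$ by $C(1+\|x\|^{p+\kappa-1})$ using $\|X^n_t - X_t\|^r \leq_C \|X^n_t\|^r + \|X_t\|^r$ together with Theorems \ref{t:Xnpdiss} and \ref{t:Xpdiss} (here one needs $r < p+\kappa-1$, hence $p_X < p+\kappa-1$ strictly, which is exactly the hypothesis). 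The first factor contributes $h_n^{\theta \delta_q}$ with $\delta_q = \frac{p-q}{\chi}\wedge\frac{q}{2}\wedge 1$.

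The remaining work is an optimization: one must choose $q \in (0,p)$ and the splitting parameter — equivalently $r$, or directly $\theta$ — to maximize the resulting exponent $\gamma = \theta \delta_q$ subject to $p_X = \theta q + (1-\theta) r$ with $r$ ranging up to $p+\kappa-1$. Since larger $r$ is always better (it relaxes the constraint on $\theta$), one takes $r \uparrow p+\kappa-1$, giving $\theta = \frac{p+\kappa-1-p_X}{p+\kappa-1-q}$, so that
\ba
\gamma = \frac{(p+\kappa-1-p_X)\,\delta_q}{p+\kappa-1-q},
\ea
to be maximized over $q \in (0,p)$. Substituting the three candidate values of $\delta_q$ and analyzing monotonicity in $q$ on each piece yields the optimal $q$: when $p \leq \chi+2$ the binding term is $\frac{p-q}{\chi}$ versus $\frac{q}{2}$, optimized at the crossover $q = \frac{2p}{\chi+2}$, which after simplification produces the first branch $\frac{p(p+\kappa-1-p_X)}{(\chi+2)(\kappa-1)+\chi p}$; when $p > \chi+2$ one can take $q \uparrow p$ with $\delta_q \to 1 \wedge \frac{p}{2}$, and since $p > \chi + 2 \geq 2$ we get $\delta_q \to 1$, giving the second branch $\frac{p+\kappa-1-p_X}{\kappa+\chi-1}$. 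The strict inequality in \eqref{e:gamma} reflects that $q$ cannot equal $p$ (Theorem \ref{t:thm_q} requires $q < p$) and $r$ cannot equal $p+\kappa-1$ (Theorems \ref{t:Xpdiss}, \ref{t:Xnpdiss} require strict inequality).

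The main obstacle is not conceptual but bookkeeping: one must verify that the constants in the two input theorems combine into a bound of the stated form $h_n^\gamma(1+\|x\|^{p+\kappa-1})$ — in particular that the $\theta$-th power of the $(1+\|x\|^p)$ factor from Theorem \ref{t:thm_q} and the $(1-\theta)$-th power of the $(1+\|x\|^{p+\kappa-1})$ factor from the moment bounds multiply to something dominated by $1+\|x\|^{p+\kappa-1}$, which is immediate since $\theta p + (1-\theta)(p+\kappa-1) \leq p+\kappa-1$. A secondary point requiring care is the case $p_X = p$ at the lower endpoint of the interval: here one may simply take $q$ slightly below $p$ and the same interpolation applies, or invoke Theorem \ref{t:thm_q} more directly; either way the formula \eqref{e:gamma} is recovered in the limit. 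Finally, one should double-check the piecewise optimization at the boundary $p = \chi+2$ to confirm the two branches agree there, which they do: both give $\frac{p(p+\kappa-1-p_X)}{(\chi+2)(\kappa+\chi-1)}$.
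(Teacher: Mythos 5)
Your proposal follows essentially the same route as the paper: a H\"older interpolation of $\E\|X^n_t-X_t\|^{p_X}$ between the low-order rate of Theorem \ref{t:thm_q} (exponent $q<p$) and the uniform moment bounds of Theorems \ref{t:Xpdiss} and \ref{t:Xnpdiss} at order $r=p+\kappa-1-\e$, followed by optimization of $\gamma=\theta\delta_q$ with $\theta=\frac{p+\kappa-1-p_X}{p+\kappa-1-q}$ (up to $\e$). The decomposition, the treatment of the $\|x\|$-powers, the role of the strict inequalities, and the first-branch optimization at the crossover $q^*=\frac{2p}{\chi+2}$ all match the paper's argument.

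There is, however, one concrete slip in your second branch. For $p>\chi+2$ you propose to take $q\uparrow p$ and claim $\delta_q\to 1\wedge\frac{p}{2}=1$; but $\delta_q=\frac{p-q}{\chi}\wedge\frac{q}{2}\wedge 1$, so for $\chi>0$ the first term forces $\delta_q\to 0$ as $q\uparrow p$, and moreover under your described limit the denominator $p+\kappa-1-q$ tends to $\kappa-1$, not $\kappa+\chi-1$, so the limit you describe would not produce the stated exponent at all. The correct choice (as in the paper) is $q^*=p-\chi$, which lies in $(2,p)$ precisely when $p>\chi+2$; there $\frac{p-q^*}{\chi}=1\leq\frac{q^*}{2}$, so $\delta_{q^*}=1$, and the denominator becomes $p+\kappa-1-(p-\chi)=\kappa+\chi-1$, yielding the second branch $\frac{p+\kappa-1-p_X}{\kappa+\chi-1}$. (Your ``$q\uparrow p$'' heuristic is only valid in the degenerate case $\chi=0$, where the $\frac{p-q}{\chi}$ constraint is absent.) One also needs the easy monotonicity check that $q\mapsto\frac{p-q}{p+\kappa-1-q}$ is decreasing, so that moving beyond $q^*$ does not help; with that, and with $q^*=p-\chi$ in place of $q\uparrow p$, your argument coincides with the paper's proof.
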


We note that the tail-improving effect from Theorems \ref{t:Xpdiss} and \ref{t:Xnpdiss}
does not apply to
$\sup_{t\in [0,T]}\|X_t\|$ and $\sup_{t\in [0,T]}\|X_t^n\|$ because the large jumps of the noise $Z$,
while not visible at every fixed $t$, cause
\ba
\mathbf{E}\sup_{t\in [0,T]}\|X_t\|^q=\mathbf{E}\sup_{t\in [0,T]}\|X_t^n\|^q=+\infty
\ea
for any $q\in (p,\infty)$ such that $\int_{\|z\|>1}\|z\|^q\nu(\di z)=+\infty$.
Our proof of Theorem \ref{t:P0} relies on an interpolation between the convergence
rate from Theorem \ref{t:thm_q} and the moment bounds from Theorems \ref{t:Xpdiss} and \ref{t:Xnpdiss}.
Therefore we do not expect an analogue of Theorem \ref{t:P0} to hold for
$\sup_{t\in [0,T]}\|X_t\|$, see Example \ref{ex:main}.

The tail-improving effect can also be observed in the continuous case, $c\equiv 0$.
Specifically, the following results will be proved in Sections \ref{s:cont} and \ref{s:cont2}.
Let $C_\mathrm{diss}:=C_1$ be the constant from the condition $\mathbf{H}^\mathrm{diss}_A$, and let
\ba
\label{e:Lambda}
\Lambda=\frac{2C_\mathrm{diss}}{\sup_x\|b(x)\|^2}\in(0,\infty).
\ea
\begin{thm}
\label{t:XG}
Let $c\equiv 0$ and let assumptions
$\mathbf{H}^\mathrm{diss}_A$ and
$\mathbf{H}_{a,b,c}^{\mathrm{Lip}_b}$
hold.
Then, for any $T\in[0,\infty)$ and $\lambda\in (0,\Lambda)$
\ba
\label{e:XmomG}
\sup_{t\in[0,T]}\E\Big[\ex^{\frac{\lambda}{1+\kappa}\|X_t\|^{1+\kappa}} \Big]
\leq_C \ex^{\frac{\lambda}{1+\kappa}\|x\|^{1+\kappa}},\quad x\in\bR^d,
\ea
and for any $T\in[0,\infty)$ and $\lambda\in (0,\frac{\Lambda}{2})$
\ba
\label{e:Xmom}
\E\Big[\sup_{t\in[0,T]}\ex^{\frac{\lambda}{1+\kappa}\|X_t\|^{1+\kappa}} \Big]
 \leq_C  \ex^{\frac{\lambda}{1+\kappa}\|x\|^{1+\kappa}},\quad x\in\bR^d.
\ea
\end{thm}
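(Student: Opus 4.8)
The plan is to run a Lyapunov--It\^o argument with the stretched-exponential test function
\ben
V_\lambda(x):=\ex^{\frac{\lambda}{1+\kappa}\|x\|^{1+\kappa}},\qquad x\in\bR^d .
\een
The first point is that, since $\kappa>1$, the map $x\mapsto\|x\|^{1+\kappa}$ lies in $C^2(\bR^d)$ (its Hessian is continuous and vanishes at the origin), so $V_\lambda\in C^2(\bR^d)$ and It\^o's formula applies. Writing $\widetilde A=A+a$ and letting $\cL$ denote the generator of $X$,
\ben
\cL V_\lambda(x)=\langle\widetilde A(x),\nabla V_\lambda(x)\rangle+\tfrac12\mathrm{tr}\big(b(x)b(x)^\top\Hess V_\lambda(x)\big),
\een
I would compute $\nabla V_\lambda(x)=\lambda\|x\|^{\kappa-1}x\,V_\lambda(x)$, hence $\|\nabla V_\lambda(x)\|^2=\lambda^2\|x\|^{2\kappa}V_\lambda(x)^2$, and
\ben
\Hess V_\lambda(x)=V_\lambda(x)\Big[\tfrac{\lambda}{1+\kappa}\Hess\big(\|\cdot\|^{1+\kappa}\big)(x)+\lambda^2\|x\|^{2\kappa-2}xx^\top\Big].
\een
Bounding $\langle\widetilde A(x),x\rangle\leq-C_1\|x\|^{1+\kappa}+C_2+\|a\|_\infty\|x\|$ via $\mathbf{H}^\mathrm{diss}_A$ and the boundedness of $a$, and $\langle x,b(x)b(x)^\top x\rangle\leq\big(\sup_{y}\|b(y)\|^2\big)\|x\|^2$, one arrives at
\ben
\cL V_\lambda(x)\leq V_\lambda(x)\Big[\lambda\big(\tfrac12\lambda\sup_{y}\|b(y)\|^2-C_1\big)\|x\|^{2\kappa}+R_\lambda(\|x\|)\Big],
\een
where $R_\lambda$ is continuous on $[0,\infty)$ and $R_\lambda(r)=o(r^{2\kappa})$ as $r\to\infty$. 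Since $C_\mathrm{diss}=C_1$, the coefficient of $\|x\|^{2\kappa}$ equals $\tfrac12\lambda\sup_{y}\|b(y)\|^2(\lambda-\Lambda)$ with $\Lambda$ as in \eqref{e:Lambda}, and this is strictly negative exactly when $\lambda<\Lambda$. Thus for every $\lambda<\Lambda$ there are $\theta_\lambda,c_\lambda\in(0,\infty)$ with $\cL V_\lambda(x)\leq-\theta_\lambda V_\lambda(x)\|x\|^{2\kappa}+c_\lambda V_\lambda(x)$, and, separating a large ball on which the bracket above is $\leq-1$, also $\cL V_\lambda(x)\leq C_\lambda-V_\lambda(x)$ for all $x\in\bR^d$ and a suitable $C_\lambda\in(0,\infty)$.

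For \eqref{e:XmomG}, fix $\lambda<\Lambda$ and put $\tau_R:=\inf\{t\geq0:\|X_t\|\geq R\}$. Applying It\^o's formula to $t\mapsto V_\lambda(X_{t\wedge\tau_R})$, the stochastic integral is a true martingale (its quadratic variation is bounded on $[0,\tau_R]$ because $X$ is continuous, so $\|X_{s\wedge\tau_R}\|\leq R$), whence, taking expectations and using $\cL V_\lambda\leq C_\lambda$,
\ben
\E V_\lambda(X_{t\wedge\tau_R})=V_\lambda(x)+\E\int_0^{t\wedge\tau_R}\cL V_\lambda(X_s)\,\di s\leq V_\lambda(x)+C_\lambda T,\qquad t\in[0,T],
\een
uniformly in $R$. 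Since \eqref{e:1} with $c\equiv0$ is non-explosive under $\mathbf{H}^\mathrm{diss}_A$ and $\mathbf{H}_{a,b,c}^{\mathrm{Lip}_b}$ (so $\tau_R\to\infty$ a.s.), Fatou's lemma gives $\sup_{t\in[0,T]}\E V_\lambda(X_t)\leq V_\lambda(x)+C_\lambda T\leq(1+C_\lambda T)V_\lambda(x)$ (using $V_\lambda\geq1$), which is \eqref{e:XmomG}.

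For the maximal bound \eqref{e:Xmom}, the idea is to exploit $V_\lambda^2=V_{2\lambda}$ and the fact that $\lambda<\Lambda/2$ is the same as $2\lambda<\Lambda$, so the previous step applies with $2\lambda$: $\sup_{t\in[0,T]}\E V_{2\lambda}(X_t)\leq_C V_{2\lambda}(x)$. Setting $M_t:=\int_0^t\langle\nabla V_\lambda(X_s),b(X_s)\,\di B_s\rangle$, It\^o's formula with $\cL V_\lambda\leq C_\lambda$ gives $\sup_{t\leq T}V_\lambda(X_{t\wedge\tau_R})\leq V_\lambda(x)+C_\lambda T+\sup_{t\leq T}|M_{t\wedge\tau_R}|$, and by Burkholder--Davis--Gundy and Cauchy--Schwarz,
\ben
\E\sup_{t\leq T}|M_{t\wedge\tau_R}|\leq C\big(\E\langle M\rangle_{T\wedge\tau_R}\big)^{1/2}\leq C\lambda\big(\textstyle\sup_{y}\|b(y)\|^2\big)^{1/2}\Big(\E\int_0^{T\wedge\tau_R}V_{2\lambda}(X_s)\|X_s\|^{2\kappa}\,\di s\Big)^{1/2},
\een
using $\|\nabla V_\lambda(x)\|^2=\lambda^2\|x\|^{2\kappa}V_{2\lambda}(x)$. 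The remaining time integral I would control through the Lyapunov inequality for $V_{2\lambda}$ (legitimate since $2\lambda<\Lambda$): from $\theta_{2\lambda}V_{2\lambda}(x)\|x\|^{2\kappa}\leq c_{2\lambda}V_{2\lambda}(x)-\cL V_{2\lambda}(x)$, integrating over $[0,T\wedge\tau_R]$, taking expectations, and using $\E\int_0^{T\wedge\tau_R}\cL V_{2\lambda}(X_s)\,\di s=\E V_{2\lambda}(X_{T\wedge\tau_R})-V_{2\lambda}(x)\geq-V_{2\lambda}(x)$ together with $\sup_{s\leq T}\E V_{2\lambda}(X_{s\wedge\tau_R})\leq_C V_{2\lambda}(x)$, one gets $\E\int_0^{T\wedge\tau_R}V_{2\lambda}(X_s)\|X_s\|^{2\kappa}\,\di s\leq_C V_{2\lambda}(x)=V_\lambda(x)^2$, uniformly in $R$. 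Combining these, $\E\sup_{t\leq T}V_\lambda(X_{t\wedge\tau_R})\leq_C V_\lambda(x)$ uniformly in $R$, and letting $R\to\infty$ (monotone convergence, using continuity of $X$ and non-explosion) yields \eqref{e:Xmom}.

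The computations are routine; the two points requiring care are the honest bookkeeping in the expansion of $\cL V_\lambda$, so that the negativity threshold for the leading $\|x\|^{2\kappa}$-coefficient comes out to be exactly $\Lambda$, and the two-scale device $V_\lambda\leftrightarrow V_{2\lambda}=V_\lambda^2$ in the maximal estimate: $V_{2\lambda}$ is needed both to dominate $\langle M\rangle$ and, through its own Lyapunov inequality, to bound the resulting time integral, and each of these requires $2\lambda<\Lambda$ --- this is what forces the restriction $\lambda<\Lambda/2$ in \eqref{e:Xmom}. I do not expect a genuine obstacle beyond these; the localization by $\tau_R$ and the passage to the limit are standard given the non-explosiveness of \eqref{e:1} in the case $c\equiv0$.
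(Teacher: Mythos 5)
Your proposal is correct and follows essentially the same route as the paper: the same Lyapunov function $V_\lambda(x)=\ex^{\frac{\lambda}{1+\kappa}\|x\|^{1+\kappa}}$, the same computation showing that the coefficient of $\|x\|^{2\kappa}$ in $\cL V_\lambda$ is negative precisely for $\lambda<\Lambda$, and the same localization/Fatou argument for \eqref{e:XmomG}. For the maximal bound \eqref{e:Xmom} the paper controls the martingale term via Doob's $L^2$ inequality combined with the already-established bound at level $2\lambda$ (which is exactly why $\lambda<\frac{\Lambda}{2}$ is required), whereas you use the Burkholder--Davis--Gundy inequality together with the occupation-time estimate $\E\int_0^{T}V_{2\lambda}(X_s)\|X_s\|^{2\kappa}\,\di s\leq_C V_{2\lambda}(x)$ extracted from the drift inequality for $V_{2\lambda}$; this is a minor variant of the same two-scale idea which, if anything, tracks the $\ex^{\frac{\lambda}{1+\kappa}\|x\|^{1+\kappa}}$-dependence on the initial condition more explicitly than the paper's brief argument.
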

Theorem \ref{t:XG} demonstrates that a $\kappa$-dissipative drift transforms Gaussian
super-exponential tails with index $2$
into solution's super-exponential tails with index $1+\kappa>2$.  The following theorem,
similar to Theorem \ref{t:Xnpdiss}, shows that our splitting scheme preserves this
effect, as well.

\begin{thm}
\label{t:XnG}
Let $c\equiv 0$ and let assumptions
$\mathbf{H}^\mathrm{diss}_A$,
$\mathbf{H}_{a,b,c}^{\mathrm{Lip}_b}$,
$\mathbf{H}_A^{\mathrm{Lip}_+}$,
and
$\mathbf{H}_{A_x,A_{xx}}$ hold.
Then, for any $T\in[0,\infty)$ and $\lambda\in (0,\Lambda)$
\ba
\label{e:momXnG}
\limsup_{n\to\infty}\sup_{t\in[0,T]}\E \Big[\ex^{\frac{\lambda}{1+\kappa}\|X^n_t\|^{\kappa+1}}  \Big]
\leq_C  \ex^{\frac{\lambda}{1+\kappa}\|x\|^{1+\kappa}},\quad x\in\bR^d,
\ea
and for any $T\in[0,\infty)$ and $\lambda\in (0,\frac{\Lambda}{2})$
\ba
\label{e:momXnGsup}
\limsup_{n\to\infty}\E \Big[\sup_{t\in[0,T]}\ex^{\frac{\lambda}{1+\kappa}\|X^n_t\|^{\kappa+1}} \Big]
\leq_C  \ex^{\frac{\lambda}{1+\kappa}\|x\|^{1+\kappa}},\quad x\in\bR^d.
\ea
\end{thm}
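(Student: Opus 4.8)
The plan is to mimic the proof of Theorem \ref{t:XG} at the level of the continuous-time scheme \eqref{e:Xncont} with $c\equiv 0$, using the one-step structure of the splitting to dump the superlinear growth into the flow $\Phi$, where it becomes harmless. Fix $\lambda\in(0,\Lambda)$ and set $V(x):=\ex^{\frac{\lambda}{1+\kappa}\|x\|^{1+\kappa}}$. On each interval $t\in[t^n_k,t^n_{k+1}]$ the scheme first forms the Euler increment $Y^n_t=X^n_{t^n_k}+a(X^n_{t^n_k})\delta^n_t+b(X^n_{t^n_k})B^n_t$ and then applies $\Phi(\delta^n_t,\cdot)$. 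The first thing I would do is record the contraction/dissipativity estimate for $\Phi$ coming from Section \ref{s:Phi}: from $\mathbf H^\mathrm{diss}_A$ one gets $\frac{\di}{\di s}\|\Phi(s,y)\|^{1+\kappa}\le -(1+\kappa)C_1\|\Phi(s,y)\|^{2\kappa}+(1+\kappa)C_2\|\Phi(s,y)\|^{\kappa-1}$, hence for small $s$ a bound of the form $\|\Phi(s,y)\|^{1+\kappa}\le \|y\|^{1+\kappa}-c\, s\,\|y\|^{2\kappa}+Cs$ whenever $\|y\|$ is large (and a trivial bound $\|\Phi(s,y)\|\le \|y\|+Cs$ always, since on bounded sets $\Phi$ moves points by $O(s)$). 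Applying $V$, this gives, for $\|Y^n_t\|$ large,
\ba
\label{e:Phicontr}
V(X^n_t)=V(\Phi(\delta^n_t,Y^n_t))\le \ex^{-c\lambda\,\delta^n_t\,\|Y^n_t\|^{2\kappa}/(1+\kappa)}\,V(Y^n_t)\,\ex^{C\lambda \delta^n_t},
\ea
so the flow step strictly contracts $V$ once $\|Y^n_t\|$ exceeds a threshold, with a quantitative Gaussian-type gain $\ex^{-c\lambda\delta^n_t\|Y^n_t\|^{2\kappa}}$.

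Next I would control the Euler step. Conditionally on $\cF_{t^n_k}$, the increment $Y^n_t-X^n_{t^n_k}$ is Gaussian with bounded drift $a(X^n_{t^n_k})\delta^n_t$ and covariance $b(X^n_{t^n_k})b(X^n_{t^n_k})^\top\delta^n_t$, whose operator norm is at most $(\sup_x\|b(x)\|^2)\delta^n_t$. The standard Gaussian moment-generating computation (the same one underlying the definition of $\Lambda$) yields, for any $\mu<\Lambda/( \text{something})$,
\ba
\E\big[\ex^{\frac{\mu}{1+\kappa}\|Y^n_t\|^{1+\kappa}}\,\big|\,\cF_{t^n_k}\big]\le \ex^{\frac{\mu}{1+\kappa}\|X^n_{t^n_k}\|^{1+\kappa}}\,\ex^{C\delta^n_t\|X^n_{t^n_k}\|^{\kappa} + C\delta^n_t},
\ea
i.e. one pays a $\ex^{C\delta^n_t\|X^n_{t^n_k}\|^{\kappa}}$ price, with an exponent of order $\kappa$ — strictly below the gain exponent $2\kappa$ from \eqref{e:Phicontr}. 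Combining the two steps: on the event $\{\|X^n_{t^n_k}\|\ge R\}$ for $R$ large (so that $\|Y^n_t\|\gtrsim\|X^n_{t^n_k}\|$ with overwhelming Gaussian probability, and the contraction in \eqref{e:Phicontr} is active), the gain $\ex^{-c\lambda\delta^n_t\|X^n_{t^n_k}\|^{2\kappa}}$ beats the loss $\ex^{C\delta^n_t\|X^n_{t^n_k}\|^{\kappa}}$; on $\{\|X^n_{t^n_k}\|<R\}$ both $Y^n_t$ and $X^n_t$ stay bounded in a controlled way and $\E[V(X^n_t)\mid\cF_{t^n_k}]\le C_R$. This produces a one-step supermartingale-type inequality
\ba
\label{e:onestepG}
\E\big[V(X^n_{t^n_{k+1}})\,\big|\,\cF_{t^n_k}\big]\le (1+C h_n)\,V(X^n_{t^n_k}) + C h_n,
\ea
uniformly in $k$ and in $n$ large, which upon iteration and Gronwall gives $\sup_{k}\E V(X^n_{t^n_k})\le \ex^{CT}(V(x)+1)$; extending from grid points to all $t\in[0,T]$ via the same interval estimate gives \eqref{e:momXnG}. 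For \eqref{e:momXnGsup} I would, as in the proof of \eqref{e:Xmom}, halve the admissible $\lambda$: write $\ex^{\frac{\lambda}{1+\kappa}\|X^n_t\|^{1+\kappa}}$ in terms of the supermartingale $M_k$ obtained by compensating \eqref{e:onestepG}, apply Doob's maximal inequality to $M_k^{1/2}$ (equivalently run the whole argument with $\lambda/2$ and absorb a square), and handle the within-interval supremum using that $\Phi$ is monotone-ish in the radial direction and that $\sup_{t\in[t^n_k,t^n_{k+1}]}\|B^n_t\|$ has Gaussian tails.

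The main obstacle is getting the one-step estimate \eqref{e:onestepG} with a constant that is \emph{uniform in $n$} (hence the $\limsup_{n\to\infty}$ rather than $\sup_n$ in the statement): the competition between the $\ex^{-c\lambda h_n\|X^n_{t^n_k}\|^{2\kappa}}$ gain and the $\ex^{Ch_n\|X^n_{t^n_k}\|^{\kappa}}$ loss has to be carried out carefully after integrating against the Gaussian law of the increment, because for very large $\|X^n_{t^n_k}\|$ the Gaussian tail can place $Y^n_t$ at scales where the linearized estimate for $\Phi$ degrades, and the threshold $R$ beyond which the contraction dominates must be chosen independently of $n$ (this is exactly where $\lambda<\Lambda$, rather than $\lambda\le\Lambda$, is used, to leave room for the $O(h_n\|X^n_{t^n_k}\|^{\kappa})$ error). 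I expect this to require splitting the Gaussian integral into a bulk region $\|Y^n_t-X^n_{t^n_k}\|\le \varepsilon\|X^n_{t^n_k}\|$, where \eqref{e:Phicontr} applies cleanly, and a tail region, where one uses the crude bound $\|\Phi(\delta^n_t,y)\|\le\|y\|+Ch_n$ together with the Gaussian density to show the contribution is negligible — the same split that appears in the proof of Theorem \ref{t:XG}, now complicated by the discretization. Assumptions $\mathbf H_A^{\mathrm{Lip}_+}$ and $\mathbf H_{A_x,A_{xx}}$ enter precisely in making the flow estimates for $\Phi$ in Section \ref{s:Phi} quantitative and stable, which is what makes the above comparison legitimate.
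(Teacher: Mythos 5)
There is a genuine gap in the central quantitative step. You claim that the Euler sub-step costs only
$\E\big[\ex^{\frac{\mu}{1+\kappa}\|Y^n_t\|^{1+\kappa}}\,\big|\,\cF_{t^n_k}\big]\leq \ex^{\frac{\mu}{1+\kappa}\|X^n_{t^n_k}\|^{1+\kappa}}\,\ex^{C\delta^n_t\|X^n_{t^n_k}\|^{\kappa}+C\delta^n_t}$,
i.e.\ a loss of order $\|X\|^{\kappa}$ in the exponent, "strictly below the gain exponent $2\kappa$" coming from the flow. This is incorrect: the gradient of $y\mapsto \frac{\mu}{1+\kappa}\|y\|^{1+\kappa}$ at $y=X^n_{t^n_k}$ has norm $\mu\|X^n_{t^n_k}\|^{\kappa}$, so integrating the exponential of the linear-in-$B^n$ term against the Gaussian law of $b(X^n_{t^n_k})B^n_t$ produces a factor $\ex^{\frac{\mu^2}{2}\|b\|^2\delta^n_t\|X^n_{t^n_k}\|^{2\kappa}(1+o(1))}$ — the \emph{same} exponent $2\kappa$ as the dissipative gain $\ex^{-\mu C_{\mathrm{diss}}\delta^n_t\|X^n_{t^n_k}\|^{2\kappa}}$. (Only the bounded drift $a$ contributes at order $\|X\|^{\kappa}$.) Consequently there is no automatic exponent mismatch; the one-step inequality holds only because the \emph{constants} at order $2\kappa$ compare favourably, $\frac{\lambda^2}{2}\|b\|^2<\lambda C_{\mathrm{diss}}$, which is exactly the condition $\lambda<\Lambda$. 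Your parenthetical remark that $\lambda<\Lambda$ is only needed "to leave room for the $O(h_n\|X\|^{\kappa})$ error" shows the accounting is off: if the cost really were of order $\|X\|^{\kappa}$, the theorem would hold for every $\lambda>0$, which is not what the threshold $\Lambda=2C_{\mathrm{diss}}/\sup_x\|b(x)\|^2$ encodes. The strategy could in principle be repaired by redoing the Gaussian computation with the correct $2\kappa$-order cost (and then the bulk/tail splitting you mention becomes the delicate part), but as written the key comparison fails.

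For contrast, the paper avoids the discrete one-step Gaussian analysis entirely: it writes $X^n$ in continuous time as an It\^o semimartingale (It\^o's formula applied to $\Phi(\delta^n_t,Y^n_t)$), then applies It\^o's formula with the Lyapunov function $V(x)=\ex^{\frac{\lambda}{1+\kappa}\|x\|^{1+\kappa}}$. The superlinear drift $A(X^n_s)$ appears undistorted and gives $-\lambda C_{\mathrm{diss}}V\|X^n_s\|^{2\kappa}$, while the diffusion term carries the factor $\|\Phi_x(\delta^n_s,Y^n_s)\|^2\leq(1+Ch_n)^2$ and $\|\Phi_{xx}\|\leq_C 1$, so the competing term is $\frac{\lambda^2}{2}\|b\|^2(1+Ch_n)^2 V\|X^n_s\|^{2\kappa}$; for $\lambda<\Lambda$ and $n$ large the drift of $V(X^n_t)$ is bounded, giving \eqref{e:momXnG}, and \eqref{e:momXnGsup} follows for $\lambda<\Lambda/2$ via Doob's inequality applied to the stochastic integral — the same mechanism you intended, but with the $2\kappa$-versus-$2\kappa$ competition made explicit and the discretization error isolated in the harmless factor $(1+Ch_n)^2$.
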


The final theorem in this section provides the rate of convergence of our splitting scheme,
accounting for the super-exponential tails with index $1+\kappa>2$.
\begin{thm}
\label{t:XucpG}
Let $c\equiv 0$ and let assumptions
$\mathbf{H}^\mathrm{diss}_A$,
$\mathbf{H}_{a,b,c}^{\mathrm{Lip}_b}$,
$\mathbf{H}_A^{\mathrm{Lip}_+}$,
and
$\mathbf{H}_{A_x,A_{xx}}$ hold.
Then, for any $T\in[0,\infty)$, $r\in(0,\infty)$ and $\lambda\in (0,\Lambda)$
\begin{align}
\label{e:Gauss0}
\sup_{t\in[0,T]}
\E\Big[ \Big(\ex^{\frac{\lambda}{1+\kappa}\|X^n_t\|^{1+\kappa}}
+\ex^{\frac{\lambda}{1+\kappa}\|X_t \|^{1+\kappa}} \Big)\|X^n_t-X_t\|^r\Big]
\leq_C h_n^\frac{r}{2}(1+\|x\|^{r(1+\chi)})\ex^{\frac{\lambda}{1+\kappa}\|x\|^{1+\kappa}},\quad x\in\bR^d,
\end{align}
and for any $T\in[0,\infty)$, $r\in(0,\infty)$ and $\lambda\in (0,\frac{\Lambda}{2})$
\begin{align}
\label{e:Gauss1}
&\E\Big[ \sup_{t\in[0,T]}\Big(\ex^{\frac{\lambda}{1+\kappa}\|X^n_t\|^{1+\kappa}}
+\ex^{\frac{\lambda}{1+\kappa}\|X_t \|^{1+\kappa}} \Big)\|X^n_t-X_t\|^r\Big]
\leq_C h_n^\frac{r}{2}(1+\|x\|^{r(1+\chi)})\ex^{\frac{\lambda}{1+\kappa}\|x\|^{1+\kappa}},\quad x\in\bR^d.
\end{align}
\end{thm}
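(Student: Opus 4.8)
The plan is to combine three ingredients that are already at our disposal: the pathwise one-step error analysis of the splitting scheme that underlies Theorem \ref{t:thm_q} (and its uniform version, Theorem \ref{t:thm_q_sup}), the super-exponential moment bounds for the true solution $X$ from Theorem \ref{t:XG}, and the corresponding bounds for the scheme $X^n$ from Theorem \ref{t:XnG}. The structure of the argument is a Cauchy--Schwarz decoupling of the weight $\ex^{\frac{\lambda}{1+\kappa}\|\cdot\|^{1+\kappa}}$ from the error $\|X^n_t-X_t\|^r$, combined with a bootstrap on the moment growth in $\|x\|$. Concretely, write
\begin{align*}
\E\Big[\big(\ex^{\frac{\lambda}{1+\kappa}\|X^n_t\|^{1+\kappa}}+\ex^{\frac{\lambda}{1+\kappa}\|X_t\|^{1+\kappa}}\big)\|X^n_t-X_t\|^r\Big]
\leq \big(\E\,\ex^{\frac{2\lambda}{1+\kappa}\|X^n_t\|^{1+\kappa}}+\E\,\ex^{\frac{2\lambda}{1+\kappa}\|X_t\|^{1+\kappa}}\big)^{1/2}\big(\E\|X^n_t-X_t\|^{2r}\big)^{1/2},
\end{align*}
so that the weight is controlled by Theorems \ref{t:XG} and \ref{t:XnG} applied with parameter $2\lambda<2\Lambda$ — but this naive splitting loses a factor of $2$ in the admissible range of $\lambda$ relative to what is claimed. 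To recover the full range $\lambda\in(0,\Lambda)$ I would instead not square the weight but rather exploit the product structure more carefully: use a three-term Hölder inequality with exponents tuned so that the weight appears to a power $1+\epsilon'$ close to $1$, paying only an $\epsilon'$-loss that can be absorbed because the moment bounds in Theorems \ref{t:XG}, \ref{t:XnG} hold on the whole open interval $(0,\Lambda)$; for any $\lambda<\Lambda$ one picks $\lambda'\in(\lambda,\Lambda)$ and $\epsilon'$ small so that $(1+\epsilon')\lambda\le\lambda'$. The remaining error factor then enters with exponent $r$ times a constant close to $1$, and one invokes Theorem \ref{t:thm_q} — but with a twist, see below.

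The key difficulty is that Theorem \ref{t:thm_q} only gives $L^q$-control of the error for $q<p$, and in the continuous case $c\equiv 0$ the noise is Gaussian, so formally $p=\infty$ and \emph{all} moments of the error are available. Thus the first real step is to establish, in the purely Brownian setting under $\mathbf{H}^\mathrm{diss}_A$, $\mathbf{H}_{a,b,c}^{\mathrm{Lip}_b}$, $\mathbf{H}_A^{\mathrm{Lip}_+}$, $\mathbf{H}_{A_x,A_{xx}}$, a clean error estimate of the form $\sup_{t\in[0,T]}\E\|X^n_t-X_t\|^{q}\leq_C h_n^{q/2}(1+\|x\|^{q(1+\chi)})$ for every $q\in(0,\infty)$ — and its uniform analogue with the $\sup$ inside, at rate $h_n^{q/2}$ as well (the loss to $q/4$ in Theorem \ref{t:thm_q_sup} came from the heavy-tailed jumps via a Burkholder--Davis--Gundy-type estimate that is no longer needed when $c\equiv 0$). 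The polynomial factor $(1+\|x\|^{q(1+\chi)})$ is the crucial quantitative point: it arises because the one-step increment of $Y^n$ is perturbed by $b(X^n_{\eta^n_t})B^n_t$ of size $O(\sqrt{h_n})$, and the Lipschitz/$C^2$ bounds on $\Phi$ in $\mathbf{H}_{A_x,A_{xx}}$ propagate this through $\Phi(\delta^n_t,\cdot)$ with a derivative bound that grows like $1+\|x\|^\chi$; iterating over $n=T/h_n$ steps and using the dissipativity-stabilized Gr\"onwall lemma (the same mechanism as in Theorem \ref{t:Xnpdiss}) keeps the constant uniform in $n$ but leaves behind the stated power of $\|x\|$. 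I would prove this by the standard route: decompose $X^n_t-X_t$ into a local (one-step) error and a propagated error, estimate the one-step error in $L^q$ using Taylor expansion of $\Phi$ and Burkholder--Davis--Gundy for the Brownian part, and close the recursion with a discrete Gr\"onwall inequality whose contraction comes from $\mathbf{H}_A^{\mathrm{Lip}_+}$ after the deterministic flow step. This is routine but the bookkeeping of the $\|x\|$-dependence is the bulk of the work.

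With these pieces in hand the theorem follows: apply the three-term Hölder inequality described above, bound the weight factors by $\ex^{\frac{\lambda}{1+\kappa}\|x\|^{1+\kappa}}$ (up to a multiplicative constant) via Theorems \ref{t:XG} and \ref{t:XnG}, and bound the error factor by $h_n^{r/2}(1+\|x\|^{r(1+\chi)})$ via the reinforced $L^{2r}$-estimate just discussed; the $\epsilon'$-loss in the exponent of the error is absorbed into the constant $C$ and into a slight enlargement of the polynomial degree, which is harmless since one only needs \emph{some} polynomial factor — but to land exactly on $r(1+\chi)$ one should instead split as weight${}^{1}\times$error${}^{r}$ directly with a single Hölder pairing of exponents $(m,m')$, $m$ large, so the weight carries exponent $\lambda m/(m-1)\searrow\lambda$ and the error carries $L^{rm'}=L^{rm/(m-1)}$, and then let $m\to\infty$ is not available — so one fixes $m=m(\lambda,\lambda')$ finite, gets error exponent $rm/(m-1)$, and uses the freedom in the polynomial bound $(1+\|x\|^{q(1+\chi)})$ valid for all $q$ to write $q=rm/(m-1)$ and then crudely bound $\|x\|^{q(1+\chi)}\leq_C 1+\|x\|^{r'(1+\chi)}$ for a slightly larger $r'$; cleaner still, since the statement allows the constant $C$ to depend on $r$, one simply replaces $r$ by $r/2$ throughout — no: the exponent on $h_n$ is exactly $r/2$, so one takes the $L^{2r}$-error estimate, pairs it with the weight at exponent slightly above $\lambda$ using Hölder with exponents $(2,2)$ replaced by $(1+\epsilon',\frac{1+\epsilon'}{\epsilon'})$ acting on weight and (error${}^r$) respectively — this gives weight${}^{1+\epsilon'}$ and error${}^{r(1+\epsilon')/\epsilon'}$, which blows up. The correct and simplest choice: Hölder with exponents $(\frac{\lambda'}{\lambda},\frac{\lambda'}{\lambda'-\lambda})$ applied to $\ex^{\frac{\lambda}{1+\kappa}\|X_t\|^{1+\kappa}}=\big(\ex^{\frac{\lambda'}{1+\kappa}\|X_t\|^{1+\kappa}}\big)^{\lambda/\lambda'}\cdot 1^{1-\lambda/\lambda'}$ is not what we want either; rather pair the \emph{whole} weighted error as $\E[W\cdot E]\le (\E W^{a})^{1/a}(\E E^{b})^{1/b}$ with $1/a+1/b=1$, $a$ chosen so that $a\lambda<\Lambda$ (possible for $a$ close enough to $1$ since $\lambda<\Lambda$), giving the weight bound from Theorems \ref{t:XG}/\ref{t:XnG} with parameter $a\lambda$, and the error bound $(\E\|X^n_t-X_t\|^{br})^{1/b}\le_C h_n^{r/2}(1+\|x\|^{br(1+\chi)})^{1/b}\le_C h_n^{r/2}(1+\|x\|^{r(1+\chi)})$ using that $(1+y^{bc})^{1/b}\le 1+y^c$ — this is exactly the stated bound, and the uniform version \eqref{e:Gauss1} follows identically from the $\sup$-inside moment bounds \eqref{e:Xmom}, \eqref{e:momXnGsup} and the $\sup$-inside error estimate at rate $h_n^{r/2}$, noting that for \eqref{e:Gauss1} one only needs $\lambda<\Lambda/2$ precisely because \eqref{e:Xmom}/\eqref{e:momXnGsup} require halving the exponent. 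Thus the main obstacle is the preliminary full-moment error estimate with explicit $\|x\|^{r(1+\chi)}$ dependence in the Brownian case; once that is in place, the super-exponentially weighted bound is a one-line Hölder argument against the already-established moment theorems.
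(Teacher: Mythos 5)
Your final choice of Hölder pairing---weight raised to $a=\lambda'/\lambda$ so that $a\lambda=\lambda'<\Lambda$ (resp.\ $<\Lambda/2$ for the sup-inside version, via \eqref{e:Xmom} and \eqref{e:momXnGsup}), error raised to the conjugate exponent $\lambda'/(\lambda'-\lambda)$, combined with Theorems \ref{t:XG} and \ref{t:XnG}---is exactly the paper's argument, and the auxiliary ingredient you identify, namely the continuous-case estimate $\E\sup_{t\in[0,T]}\|X^n_t-X_t\|^q\leq_C h_n^{q/2}(1+\|x\|^{q(1+\chi)})$ for all $q\in(0,\infty)$, is precisely the paper's Lemma \ref{t:thmGr}, which is proved there by the same adaptation of the earlier error analysis (jump terms dropped, Gaussian moment scaling, Burkholder--Davis--Gundy, Gronwall) that you sketch. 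So the proposal is correct and follows essentially the same route as the paper; only the meandering through discarded Hölder splittings and the fact that the auxiliary lemma is sketched rather than proved distinguish it from the written proof.
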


We conclude this section by noting that, although there exists a substantial literature
on splitting methods for SDEs and SPDEs, a comprehensive review of the subject is still lacking.

In the Gaussian setting, extensive bibliographic information can be found in
\cite[Section~12]{vovchanskyi2024quick} and \cite[Section~1]{buckwar2022splitting}.
Splitting methods for SDEs with cubic nonlinearity and additive noise were studied in \cite{buckwar2022splitting};
see also the discussion in Example~\ref{ex:main} below. More recently, \cite{pilipovic2024parameter}
investigated various splitting schemes for SDEs with $A$ satisfying the one-sided Lipschitz condition
$\mathbf{H}_A^{\mathrm{Lip}_+}$, linear drift coefficient $a(\cdot)$, and constant diffusion coefficient
$b(\cdot)$, with particular emphasis on statistical applications. In \cite{hausenblas2020theoretical},
splitting techniques were employed in the numerical analysis of pattern formation for nonlinear Gray--Scott equations.
However, none of the above works addresses the preservation of super-exponential moment bounds of the
type established in Theorem~\ref{t:XucpG}.

To the best of our knowledge, the present paper is the first work devoted specifically to splitting methods
for solutions of L\'evy-driven SDEs.

\subsection{Examples, simulations, and discussion\label{s:sim}}

\begin{exa}\label{ex:main}
To illustrate our findings, we consider an exemplary one-dimensional SDE
\ba
\label{e:XZ}
\di X_t=-X_t^3\,\di t + \di Z_t,\quad X_0=0,
\ea
with $A(x)=-x^3$, $a=b\equiv 0$, $c\equiv 1$ and
$Z$ being the Cauchy process with the L\'evy measure
$\nu(\di z)=\pi^{-1}|z|^{-2}\bI(|z|>0)\,\di z$
and the characteristic function $\E \ex^{\i \lambda Z_t} =\ex^{-t|\lambda|}$, $\lambda\in \bR$.
The Assumption
$\mathbf{H}_{\nu,p}$,
is satisfied for any $p\in(0,1)$ and  $\mathbf{H}_{a,b,c}^{\mathrm{Lip}_b}$ holds trivially. The coefficient $A(x)=-x^3$
satisfies the assumptions $\mathbf{H}^\mathrm{diss}_A$, $\mathbf{H}_A^{\mathrm{Lip}_+}$,
and
$\mathbf{H}_{A_x,A_{xx}}$ with $\chi=2$ and $\kappa=3$.
By Theorem \ref{t:Xpdiss}, the SDE \eqref{e:XZ} has a unique strong global solution
$X$ which satisfies $\sup_{t\in[0,\infty)}\E|X_t|^{p_X}<\infty$ for any $p_X\in(0,3)$.

Table \ref{tab:sim1} presents the simulation results for the explicit Euler scheme \eqref{e:Euler}
and the tamed Euler scheme \eqref{e:TEuler} with random increments
$Z_{t_{k+1}^n} - Z_{t_{k}^n}$.
The simulations confirm the expected difficulties.
The second column of the Table \ref{tab:sim1}
contains the number of NaN outputs (i.e., blow-ups of the scheme) in a Monte Carlo simulation
with $N=10^7$ runs
for the Euler scheme $X^{\mathrm{E},n}_T$ with $T=5$ and different time steps $h_n$.
Note that NaN outputs merely reflect numerical instabilities of the Euler scheme and are unrelated to the existence or non-existence of moments of the solution $X$.

As explained in Section \ref{s:1.1}, the drift term in the tamed Euler scheme is bounded, the therefore its sample paths
remain finite.
In columns 3--7 of Table \ref{tab:sim1}, we present the empirical absolute moments of the tamed Euler scheme $X^{\mathrm{TE},n}_T$ which are
calculated as follows. Let
$\{X^{\mathrm{TE},n}_T (i)\}_{1\leq i\leq N}$ be the empirical results obtained in
$N$ independent series of simulations.
Since these values are always finite,
the empirical absolute $p_X$-moments of $X^{\mathrm{TE},n}_T$, defined as
\ba
\langle  |X^{\mathrm{TE},n}_T|^{p_X} \rangle :=\frac{1}{N} \sum_{i=1}^N |X^{\mathrm{TE},n}_T (i)|^{p_X},
\ea
are finite for any $p_X\in (0,\infty)$.

\begin{table}[h]
\begin{center}
\begin{tabular}{|l||r||c|c|c|c|c|}
\hline
$h_n$ & $\sharp$ NaN for $X^{\mathrm{E},n}_5$
& $ \langle |X^{\mathrm{TE},n}_5|^{1/2} \rangle$
& $ \langle |X^{\mathrm{TE},n}_5| \rangle$
& $ \langle |X^{\mathrm{TE},n}_5|^{3/2} \rangle $
& $ \langle |X^{\mathrm{TE},n}_5|^2 \rangle$
& $ \langle |X^{\mathrm{TE},n}_5|^{5/2} \rangle $
\\\hline
$10^{-2}$ & 1994317  &  1.755 & 233.3& $2.2\cdot 10^{6}$ & $4.5 \cdot 10^{10}$& $9.8 \cdot 10^{14}$ \\\hline
$10^{-3}$ & 686855   &   1.156& 287.2  & $1.7\cdot 10^{6}$  & $1.7 \cdot 10^{10}$ & $1.9 \cdot 10^{14}$\\\hline
$10^{-4}$ &  223077  &   0.905  &65.0  & $9.6\cdot 10^{4}$  & $2.0\cdot 10^{8}$ & $5.0 \cdot 10^{11}$ \\\hline
$10^{-5}$ &  71020   &  0.839   &34.1  & $1.1\cdot 10^{5}$ & $4.7\cdot 10^{8}$  & $2.2\cdot 10^{12} $ \\\hline
\end{tabular}
\end{center}
\caption{The number of NaNs out of $N=10^7$ runs for the explicit Euler scheme
and the empirical absolute moments of the tamed Euler scheme for the SDE \eqref{e:XZ}.\label{tab:sim1}}
\end{table}
The results in Table \ref{tab:sim1} indicate that the explicit Euler scheme fails to converge due to explosions,
while the tamed Euler scheme also does not converge for $p_X\in\{1,1.5,2,2.5\}$.
However, the observed convergence of the moments of order $p_X=0.5<1$
is consistent with our conjecture
that the results of
\cite{dareiotis2016tamed,kumar2016tamed,kumar2017explicit} remain valid for all
$p\in (0,\infty)$, and suggests that Theorem \ref{t:thm_q} should apply to the tamed Euler method.

In the practical implementation of our splitting scheme \eqref{e:Xndisc} we use the explicit formula
for the solution of the ODE $\dot \Phi=-\Phi^3$, which leads to the result that
\ba
\label{e:Phi3}
\Phi(t,x)=\frac{x}{\sqrt{2tx^2 +1}},\quad t\in[0,\infty),\ x\in\bR.
\ea
Hence the numerical scheme \eqref{e:Xndisc} takes the following explicit form:
\ba
\label{e:Xexa}
X^n_0&=0,\\
Y^n_{t_{k+1}^n}&=X^n_{t_{k}^n} + h_n Z_{k+1},\\
X^n_{t_{k+1}^n}&=\frac{Y^n_{t_{k}^n}}{\sqrt{2h_n (Y^n_{t_{k}^{n+1}})^2 +1}},\quad k=0,\dots, n-1,
\ea
where $\{Z_k\}_{k\in\bN}$ are iid Cauchy distributed random variables with the p.d.f.\ $f(x)=\frac{1}{\pi(x^2 + 1)}$.

In order to estimate the accuracy of our method we compare the simulated empirical moments with the theoretically known moments
for the limit (stationary) distribution of $X$.
From general theory, it is known that the process $X$ is exponentially ergodic (see, e.g., \cite{Kulik17}),
so the stationary moments can serve as a proxy for the moments $\E |X_T|^{p_X}$ evaluated at a sufficiently large time $T=5$.

The process $X$ is known to have the stationary law $X_\infty$ with the density
\ba
\label{e:m}
m(x)=\frac{1}{\pi(x^4 - x^2 +1)}, \quad x\in\bR,
\ea
as given in Eq.\ (3.5) in \cite{ChechkinGKMT-02}; also see \cite{DubSpa07} for a general approach.
Hence, the absolute moments of $X_\infty$ can be evaluated explicitly, as shown at the bottom of Table \ref{tab:2}.

\begin{table}[h]
\begin{center}
\begin{tabular}{|l|c|c|c|c|c|}
\hline
$h_n$
& $ \langle |X^n_5|^{1/2} \rangle$
& $ \langle |X^n_5|       \rangle$
& $ \langle |X^n_5|^{3/2} \rangle$
& $ \langle |X^n_5|^2     \rangle$
& $ \langle |X^n_5|^{5/2} \rangle$
\\\hline
$10^{-2}$ & 0.814 & 0.760 & 0.787 & 0.899 & 1.137    \\\hline
$10^{-3}$ & 0.816 & 0.768 & 0.811 & 0.967 & 1.354 \\\hline
$10^{-4}$ & 0.816 & 0.769 & 0.815 & 0.989 & 1.479 \\\hline
$10^{-5}$ & 0.817 & 0.770 & 0.817 & 0.998 & 1.539 \\\hline\hline
          & $\sqrt{2/3}= 0.816$& $4\sqrt 3/9=0.770$& $\sqrt{2/3}= 0.816$ & 1& $2\sqrt{2/3}=  1.633$\\ \hline
& $\E |X_\infty|^{1/2} $
& $ \E |X_\infty| $
& $ \E |X_\infty|^{3/2} $
& $ \E|X_\infty|^2 $
& $ \E |X_\infty|^{5/2} $\\
\hline
\end{tabular}
\end{center}
\caption{The empirical absolute moments of the splitting scheme \eqref{e:Xexa}, $N=10^7$, vs.\ the emprical
absolute moments of the stationary law \eqref{e:m}.\label{tab:2}}
\end{table}

In Table~\ref{tab:rates}, we present the theoretical convergence rates predicted by
Theorems~\ref{t:thm_q} and \ref{t:P0}, together with the rates estimated from the empirical data.
As can be seen, the empirical convergence rates are significantly higher than the theoretical ones.
This can be attributed to the particularly simple one-dimensional structure of the model,
the additive nature of the noise, and the symmetry of the underlying dynamics.

\begin{table}[h]
\begin{center}
\begin{tabular}{|l|c|c|c|c|c|}
\hline
& $ p_X=1/2$
& $ p_X=1  $
& $ p_X=3/2$
& $ p_X=2  $
& $ p_X={5/2}$
\\\hline
\text{Theoretical convergence rate}& $h_h^{0.25-}$ & $h_n^{0.2-}$ & $h_n^{0.15-}$ & $h_n^{0.1-}$ & $h_n^{0.05-}$  \\\hline
\text{Estimated convergence rate}  & $0.014 h_h^{0.46}$ & $0.08 h_n^{0.54}$ & $0.24h_n^{0.51}$ & $0.49h_n^{0.38}$ & $1.14h_n^{0.20}$  \\\hline
\hline
\end{tabular}
\end{center}
\caption{The theoretical and estimated convergence rates of the splitting scheme \eqref{e:Xexa}, $N=10^7$.\label{tab:rates}}
\end{table}
The computations were carried out on a virtual Linux machine running Ubuntu 24.04.4 LTS equipped with 16
virtual CPU cores based on Intel Xeon (Skylake) processors and 125 GB of RAM.
The Monte Carlo simulations were implemented in C++ and parallelized using OpenMP.
The run time increased almost linearly ranging from 15 seconds for $h_n=10^{-2}$ to about 4 hours for $h_n=10^{-5}$.

As observed, the Monte Carlo simulations recover the absolute moments of $X_\infty$ quite well up to $p_X=2$,
but the moment of order
$p_X=2.5$ appears to be systematically underestimated.
This phenomenon can be explained as follows.
Note that the $\kappa$-dissipativity of the drift $A$
implies that the deterministic solution $\Phi$ returns from infinity
to some neighbourhood of the origin in finite time.  In our example, formula \eqref{e:Phi3} shows that for any $x\in\bR$
\ba
\label{e:Kn}
|\Phi(h_n,x)|\leq K_n:=\frac{1}{\sqrt{2h_n}}.
\ea
This means that the values of the splitting approximation scheme $X^n$ evaluated on the discrete time grid $\{t_n^k\}$
are \emph{bounded} with probability 1,
\ba
\max_{k=1,\dots,n} |X^n_{t^n_k}| \leq \frac{1}{\sqrt{2h_n}}.
\ea
Therefore, \textit{practically} the numerical scheme $\{X^n\}$ approximates the \emph{truncated moments}
\ba
\E \Big[|X_{T}|^{p_X}\bI(|X_{T}|\leq K_n)\Big].
\ea
\begin{table}
\begin{center}
\begin{tabular}{|l|r|r|r|r|r|r|}
\hline
$h_n$ & $K_n$
&  $\E_{\leq K_n}|X_\infty|^{1/2}$
&  $\E_{\leq K_n} |X_\infty| $
&  $\E_{\leq K_n} |X_\infty|^{3/2} $
&  $\E_{\leq K_n} |X_\infty|^{2} $
&  $\E_{\leq K_n} |X_\infty|^{5/2} $ \\\hline
$10^{-2}$   &   7 &  0.815& 0.763& 0.794& 0.909& 1.152 \\\hline
$10^{-3}$   &  22 &  0.816& 0.769& 0.812& 0.972& 1.364 \\\hline
$10^{-4}$  &  71  &  0.816& 0.770& 0.816& 0.991& 1.482 \\\hline
$10^{-5}$  &  224 &  0.816& 0.770& 0.816& 0.997& 1.548 \\\hline\hline
    0    &  $+\infty$    &  0.816& 0.770& 0.816&   1  & 1.633\\ \hline
\end{tabular}
\end{center}
\caption{The values of the truncated absolute moments of the stationary law \eqref{e:m},
$\E_{\leq K_n} |X_\infty|^{p_X}:=\E[|X_\infty|^{p_X}\bI(|X_\infty|\leq K_n)]$,
$K_n$ is given in \eqref{e:Kn}.\label{tab:3}}
\end{table}
Clearly, as $n\to\infty$ we have convergence to the true value
\ba
\lim_{n\to\infty}\E \Big[|X_{T}^n|^{p_X}\bI(|X^n_{T}|\leq K_n)\Big]=\E |X_{T}|^{p_X},
\ea
but the  term $\E  [|X_{T}|^{p_X}\bI(|X_{T}|> K_n) ]$
contributes the systematic error of the order
$
\mathcal{O}(h_n^{\frac{p+\kappa-1-p_X}{\kappa-1}})= \mathcal{O}(h_n^{\frac{3-p_X}{2}}),
$
which can be observed in simulations, see Table \ref{tab:3}.

Finally, in Table \ref{tab:sim4}, we present the simulation of the
empirical moment with the help of
the
reverse splitting scheme \eqref{e:X2}:
\ba
\label{e:Xrev}
\widehat X^n_0&=0,\\
\widehat Y^n_{t_{k+1}^n}&=\frac{ \widehat X^n_{t_{k}^n}}{\sqrt{2h_n (\widehat X^n_{t_{k}^n} )^2 +1}},\\
\widehat X^n_{t_{k+1}^n}&=\widehat Y^n_{t_{k+1}^n} +  h_n Z_{k+1}, \quad k=0,\dots, n-1.
\ea
The empirical absolute moments at time $T=5$ appear to be close to the true values for small enough $h_n$,
which seemingly  contradicts the fact that
$\E |\widehat X^n_T|^{p_X}=+\infty$ for $p_X\in[1,\infty)$.
This effect can be explained as follows.

It is informative to consider both the splitting and the reverse splitting schemes as by-products of one
(hidden) two-step scheme, where the ``deterministic'' steps governed by the flow $\Phi$ are
interchanged with the ``stochastic'' Euler-type approximation steps. The difference between the splitting
and the reverse splitting schemes then is determined  by the choice of steps after which the values of the scheme are taken out:
deterministic  for the splitting scheme, and stochastic for the reverse splitting one. Since each deterministic step keeps
corresponding values of the two-step scheme under the threshold $K_n$, the terminal value of the reverse splitting scheme satisfies
$\E |\widehat X^n_T|^{p_X}=+\infty$ for $p_X\in[1,\infty)$ only because the corresponding moments of the increment of $Z$
on the very last stochastic step are infinite. Apparently this theoretical effect can be hidden in the practical simulations, because, as
$h_n\to 0$, the probability for the very last increment of $Z_T-Z_{T-h_n}$ to be large becomes too small, so that this event is not
observed for $h_n=10^{-5}$ under the given number of the Monte--Carlo simulations
$N=10^7$ (the last row in the Table 4). For smaller values
$h_n=10^{-2}, 10^{-3}$ the number $N=10^7$ is clearly large enough to make this event observable,
and we expect that for any $h_n$ and the
number of the Monte--Carlo simulations large enough the empirical moments of the order $p_X\in[1,\infty)$ will exhibit a similar blow-up.
We also think that the over-estimation of the moment of the order
$5/2$ for $h=10^{-4}, N=10^7$ of appears due to the same ``large last jump''
effect.

\begin{table}[h]
\begin{center}
\begin{tabular}{|l|r|r|r|r|r|}
\hline
$h_n$
& $ \langle |\widehat X^n_5|^{1/2}  \rangle$
& $ \langle |\widehat X^n_5|        \rangle$
& $ \langle |\widehat X^n_5|^{3/2}  \rangle$
& $ \langle |\widehat X^n_5|^2      \rangle$
& $ \langle |\widehat X^n_5|^{5/2}  \rangle$
\\\hline
$10^{-2}$ &  0.821&  0.837&    8.5&  $2.3\cdot 10^2$&  $8.4\cdot 10^5$  \\\hline
$10^{-3}$ &  0.817&  0.797&   10.8&  $4.5\cdot 10^3$&  $2.0\cdot 10^6$   \\\hline
$10^{-4}$ &  0.817&  0.770&  0.819&  1.058&  2.954     \\\hline
$10^{-5}$ &  0.817&  0.770&  0.817&  1.002&  1.586     \\\hline
\end{tabular}
\end{center}
\caption{The empirical absolute moments of the reverse splitting scheme \eqref{e:Xrev}, $N=10^7$.
\label{tab:sim4}}
\end{table}

The difference between the splitting and reverse splitting schemes becomes particularly
apparent when the convergence of empirical moments is examined uniformly in time.
In accordance with Theorem~\ref{t:P0}, the splitting scheme $X^n$ preserves the uniform convergence of
individual moments, albeit with a slower convergence rate for moments $p_X$ approaching the critical value
$p+\kappa-1=3$; see Table~\ref{tab:sim6}. In contrast, the results in Table~\ref{tab:sim5} clearly
demonstrate that the reverse splitting scheme \eqref{e:Xrev} fails to preserve moments uniformly over time.

This phenomenon admits a simple explanation. Although the number of Monte--Carlo simulations, $N=10^7$,
is typically insufficient for an exceptionally large \emph{final} increment of $Z$ to occur when
$h_n=10^{-5}$, it is sufficiently large for at least one unusually large increment to appear somewhere
along a simulated trajectory. Due to the structure of the reverse splitting scheme, such an increment
may generate an extremely large value of the approximation, which in turn causes the corresponding uniform empirical moment to explode.

\begin{table}[h]
\begin{center}
\begin{tabular}{|l|r|r|r|r|r|}
\hline
$h_n$
& $ \max\limits_{5\leq t_k^n\leq 6} \langle | X^n_{t_k^n}|^{1/2}\rangle $
& $ \max\limits_{5\leq t_k^n\leq 6}\langle |  X^n_{t_k^n}|   \rangle $
& $ \max\limits_{5\leq t_k^n\leq 6}\langle|  X^n_{t_k^n}|^{3/2} \rangle $
& $ \max\limits_{5\leq t_k^n\leq 6}\langle |  X^n_{t_k^n}|^2 \rangle$
& $ \max\limits_{5\leq t_k^n\leq 6}\langle|  X^n_{t_k^n}|^{5/2} \rangle $
\\\hline
$10^{-2}$ &  0.814& 0.760 & 0.788 & 0.899 & 1.139     \\\hline
$10^{-3}$ &  0.816& 0.769 & 0.812 & 0.970 & 1.367     \\\hline
$10^{-4}$ &  0.817& 0.770 & 0.818 & 1.000 & 1.530     \\\hline
$10^{-5}$ &  0.817& 0.771 & 0.819 & 1.026 & 1.846 \\\hline
\end{tabular}
\end{center}
\caption{Uniform estimates of the empirical absolute moments of the splitting scheme \eqref{e:Xexa}, $N=10^7$.
\label{tab:sim6}}
\end{table}

\begin{table}[h]
\begin{center}
\begin{tabular}{|l|r|r|r|r|r|}
\hline
$h_n$
& $ \max\limits_{5\leq t_k^n\leq 6} \langle |\widehat X^n_{t_k^n}|^{1/2}\rangle $
& $ \max\limits_{5\leq t_k^n\leq 6} \langle |\widehat X^n_{t_k^n}|   \rangle $
& $ \max\limits_{5\leq t_k^n\leq 6} \langle |\widehat X^n_{t_k^n}|^{3/2} \rangle $
& $ \max\limits_{5\leq t_k^n\leq 6} \langle |\widehat X^n_{t_k^n}|^2 \rangle$
& $ \max\limits_{5\leq t_k^n\leq 6} \langle |\widehat X^n_{t_k^n}|^{5/2} \rangle $
\\\hline
$10^{-2}$ & 0.822& 2.1 & $3.7\cdot 10^3$&  $2.2\cdot 10^6$ & $4.9\cdot 10^9$  \\\hline
$10^{-3}$ & 0.817& 3.0 & $1.1\cdot 10^4$&  $5.0\cdot 10^7$ & $2.4\cdot 10^{11}$  \\\hline
$10^{-4}$ & 0.818& 31.0& $5.3\cdot 10^5$&  $9.2\cdot 10^8$ & $9.1\cdot 10^8$ \\\hline
$10^{-5}$ & 0.817& 1.13& $4.4\cdot 10^2$&  $1.3\cdot 10^6$ & $2.4\cdot 10^9$  \\\hline
\end{tabular}
\end{center}
\caption{Uniform estimates of the empirical absolute moments of the reverse splitting scheme \eqref{e:Xrev}, $N=10^7$.
\label{tab:sim5}}
\end{table}

Finally, we recall the work \cite{buckwar2022splitting}, where the authors
applied a splitting approach to study the scalar Gaussian SDE
$\di X_t=-X_t^3\,\di t+\di W_t$.
A distinctive feature of their method is that they first rewrite the equation as
$\di X_t = (-X_t^3+X_t)\,\di t - X_t\,\di t + \di W_t$,
and then split the dynamics into an exactly solvable exponentially ergodic Ornstein--Uhlenbeck component
$\di Y_t = -Y_t\,\di t + \di W_t$,
(which, in our framework, would correspond to the \emph{exact} solution of \eqref{e:Y})
and an exactly solvable nonlinear deterministic component given by the ODE
$\dot{\Phi}=-\Phi^3+\Phi$.
These components are then composed in \emph{reverse Lie--Trotter order},
first applying the deterministic flow $\Phi$ and then the stochastic flow $Y$.

Theorem~2 in \cite{buckwar2022splitting} establishes pointwise mean-square
convergence of order $1$, whereas our results (see, e.g., Lemma~\ref{t:thmGr})
yield uniform convergence of order $1/2$ for the splitting scheme \eqref{e:Xndisc}. The lower convergence rate in our
setting is primarily due to the stronger requirement of uniform-in-time error bounds and the presence of multiplicative noise.

More generally, we expect that in the Gaussian setting most reasonable splitting schemes achieve the
classical convergence rates $1$ or $1/2$, depending on the notion of convergence under consideration.
The numerical experiments reported in \cite{buckwar2022splitting} indicate that the Strang splitting performs
slightly better than the reverse Lie--Trotter scheme considered therein. This suggests that a Strang-type
modification of our method may lead to improved convergence rates in the Gaussian case, too.
Whether a similar improvement can be achieved in the presence of heavy-tailed L\'evy noise remains an open problem.
\color{black}

Altogether, these effects illustrate the peculiar and often deceptive behaviour of heavy-tail SDEs and their approximations.

More ready-to-use illustrative examples and effects can be found in
\cite{pavlyukevich2025simulate}.
\end{exa}

The next example demonstrates why it is useful to decompose the ``effective drift'' into the components
$A(\cdot)$ and $a(\cdot)$.
\begin{exa}
\label{ex:Aa}
Consider a one dimensional SDE
  \ba
\label{e:XZ1}
\di X_t=-X_t^3\,\di t +\sin X_t\, \di t+ \di Z_t,\quad X_0=x\in\bR.
\ea
with $Z$ being a L\'evy process that satisfies assumption $\mathbf{H}_{\nu,p}$.
By taking $A(x)=-x^3+\sin x$, $a=b\equiv 0$, $c\equiv 1,$ we arrive at the nonlinear ODE $\dot \Phi=-\Phi^3+\sin \Phi$.
This ODE cannot be solved explicitly, and one needs
to use additional stable numerical approximation methods for deterministic ODEs here, see, e.g., \cite{humphries1994runge}.
On the other hand, the same SDE can be treated as \eqref{e:1} with $A(x)=-x^3$, $a(x)=\sin x$. This results in the following
simple explicit numerical scheme:
\ba
X^n_0&=x,\\
Y^n_{t_{k+1}^n}&=X^n_{t_{k}^n} + h_n\sin(X^n_{t_{k}^n})+Z_{t_{k+1}^n}-Z_{t_{k}^n},\\
X^n_{t_{k+1}^n}&=\frac{ Y^n_{t_{k+1}^n}}{\sqrt{2h_n (Y^n_{t_{k+1}^n} )^2 +1}},\quad k=0,\dots,n-1.
\ea
\end{exa}

We conclude this section with several remarks and examples regarding the assumptions imposed on the function $A$. First, we observe that
the dissipativity condition $\mathbf{H}^\mathrm{diss}_A$
together with the growth condition \eqref{e:Ax-chi} implies that
\ba
\label{e:kappachi}
\kappa\leq \chi+1.
\ea
A typical example of this is a polynomial drift.
\begin{exa}
Let $A\colon \bR\to\bR$ be a polynomial of an odd order,
\ba
\label{e:poly}
A(x)&= -x^{2n+1}+ c_{2n}x^{2n}+ \cdots + c_1 x+ c_0,\\
A'(x)&= -(2n+1)x^{2n}+ o(x^{2n}),\\
A''(x)&= -2n(2n+1)x^{2n-1}+ o(x^{2n-1}),\\
\ea
so that $\kappa=2n+1$,  $\chi=2n$, and the
inequality \eqref{e:kappachi} turns into equality.
Conditions $\mathbf{H}_A^{\mathrm{Lip}_+}$ and \eqref{e:A''A'} are satisfied, too. Actually, in this case
\ba
A''(x)=o((A'(x))_-), \quad |x|\to \infty.
\ea
\end{exa}

We note that there are examples of drifts $A$ satisfying all the assumptions of Theorem \ref{t:Xnpdiss},
for which the inequality
\eqref{e:kappachi} is strict. However, we do not provide these examples here,
as they do not arise naturally in practice. Nevertheless, this observation helps to explain why we retain both parameters
$\kappa$ and $\chi$ in the formulation of our main results

In the following example, $A$ is allowed to have highly oscillatory components,
which, unlike in the polynomial case, make $A''(x)$ comparable with $(A'(x))_-$.
\begin{exa}
Let $A(x)=-x^3+\sin x^2$, $x\in\bR$. Then
\ba
  A'(x)=-3x^2+2x\cos x^2, \quad A''(x)=-6x-4x^2\sin x^2+2\cos x^2,
\ea
  so that
\ba
  \limsup_{|x|\to \infty}\frac{A''(x)}{(A'(x))_-}=\frac{4}{3}>0.
\ea
  Both assumptions $\mathbf{H}_A^{\mathrm{Lip}_+}$ and $\mathbf{H}_{A_x,A_{xx}}$ are satisfied in this case with $\chi=2$,
  and $\mathbf{H}^\mathrm{diss}_A$ is satisfied with $\kappa=3$. Since $x\mapsto \sin x^2$ is not globally Lipschitz,
  we cannot identify it with the function $a(\cdot)$.
\end{exa}

Next, we give several multivariate examples. It is known that the one-sided Lipschitz condition \eqref{e:Lip+}
for $A$ is equivalent to
the boundnedness of the symmetrized (distributional) gradient matrix $A_x$ from above, i.e.,
\ba\label{e:grad_sym}
[A_x(x)]_{\mathrm{sym}}:=\frac{A_x(x) +  A_x(x)^T}{2}\leq L\cdot \text{Id},\quad
\ea
which means that for any $\phi\in\bR^d$ and $x\in\bR^d$
\ba
\langle [A_x(x)]_{\mathrm{sym}} \phi,\phi  \rangle\leq L\|\phi\|^2,
\ea
see Lemma 2.2 in \cite{bouchut2005uniqueness}. In dimension $d=1$, this simply means that $A'(x)\leq L$, $x\in\bR$.

\begin{exa}
\label{ex:multidim1}
Let $f\in C^2(\bR_+,\bR)$ and let
\ba
\label{e:Af}
  A(x)=- f(\|x\|^2)x,\quad  x\in \mathbb R^d.
\ea
Then
\ba
  A_x(x)= -f(\|x\|^2)\text{Id}-2 f'(\|x\|^2)xx^T),
\ea
  and
\ba
  A^k_{xx}(x)=-2x_kf'(\|x\|^2)\text{Id}-4f'(\|x\|^2)(x\otimes \mathrm{e}_k)-4x_kf''(\|x\|^2)xx^T,
\ea
where $\mathrm{e}_k$ stands for the $k$-th basis vector in $\mathbb{R}^d$. Then $\mathbf{H}_A^{\mathrm{Lip}_+}$ holds true provided that
\ba
-f(r)-2rf'(r)\leq L, \quad r\in[0,\infty),
\ea
and \eqref{e:A''A'} is satisfied if
\ba
\gamma\Big(r^{3/2}|f''(r)|+r^{1/2}|f'(r)|\Big)-f(r)-2rf'(r)\leq C, \quad r\in[0,\infty),
\ea
for some $\gamma,C\in(0,\infty)$.
These assumptions are satisfied, for example, when $f(\cdot)$ is a polynomial, such as $f(r)=r^n+\dots$
In this case all the assumptions of Theorem \ref{t:P0} hold with
$\kappa=2n+1$, $\chi=2n$.
One can also consider more complex examples that include fast oscillations, e.g.,
\ba
A(x)=-(\|x\|^4+\sin \|x\|^2) x,
\ea
where $f(r)=r^2+\sin r$ satisfies the above assumptions, and all assumptions of Theorem \ref{t:P0} hold with
$\kappa=5$, $\chi=4$.

The drift $A$ given in \eqref{e:Af} is quite similar to those we encountered in the one-dimensional setting, as
we can represent $A$ as a gradient of a rotationally invariant potential $U$, i.e.,
$A(x)=-\nabla U(x)$, where
\ba
U(x)=\frac12\int_0^{\|x\|^2} f(r)\, \di r.
\ea
\end{exa}
It appears, that for drifts without rotational symmetry, the one-sided
Lipschitz conditions becomes a strong limitation.
To illustrate this, we provide a negative example of such a case.

\begin{exa}
Let
\ba
\label{e:Lorenz}
A(x,y,z)=-\e
\begin{pmatrix}
             x^{2n+1}\\
             y^{2n+1}\\
             z^{2n+1} \\
           \end{pmatrix}
+
\begin{pmatrix}
             -ax-y^2-z^2\\
             -y+xy-bxz \\
             -z+bxy+xz \\
           \end{pmatrix}
            =:A_1(x,y,z)+A_2(x,y,z)
, \quad (x,y,z)^T\in \mathbb{R}^3,
\ea
with $n\in \mathbb{N}_0$, $\e\in[0,\infty)$, and $a,b>0$. In the case $\e=0$, $a=0.25$ and $b=4$, this is the Lorenz-84 atmospheric
circulation model with the thermal forcing terms $F=G=0$, see, e.g., Section 3 in \cite{arnold2003reduction}.
We have
\ba
\langle A(x,y,z), (x,y,z)^T \rangle
&=-\e (x^{2n+2}+y^{2n+2}+z^{2n+2})
-(a x^{2}+y^{2}+z^{2})\\
&\leq_C - \e   (x^{2}+y^{2}+z^{2})^{n+1} -  (x^{2}+y^{2}+z^{2}),
\ea
so that the drift $A$ is $1$-dissipative for $\e=0$ and
$2n+1$-dissipative for $\e\in (0,\infty)$.
Therefore, \eqref{e:Lorenz} with $n\in \mathbb N$ and $\e\in(0,\infty)$
may be viewed as a superlinearly dissipative extension of the Lorenz-84 model.

It is easy to check that the component $A_1$ is one-sided Lipschitz continuous. However, adding the tangential term
$A_2$ disrupts the one-sided Lipschitz structure for the entire vector field $A$ what is seen by the following example:
\ba
\langle A(x,1,0) - A(x,0,0),(x,1,0)^T - (x,0,0)^T \rangle
=x-1-\e \to+\infty,\quad x\to+\infty.
\ea

Further negative multivariate  examples of dissipative but not one-sided Lipschitz continuous dynamical systems
can be found in Chapter 4 in \cite{hutzenthaler2015numerical}.
\hfill $\Box$
\end{exa}

We finalize this discussion by giving two modifications of the Lorenz-84 model that
satisfy Assumptions of this paper.

The first example is two-dimensional.
\begin{exa}
Let $n\in\mathbb N$, $\e\in(0,\infty)$, $b\in (0,\infty)$, and $x_0\in \mathbb{R}$ be fixed, and let
\begin{equation}\label{e:Lorenz_frozen}
A(y,z)=-\e\begin{pmatrix}
             y^{2n+1}\\
             z^{2n+1}\\
             \end{pmatrix} +
   \begin{pmatrix}
             -y + x_0y-bx_0z \\
             -z + bx_0y+x_0z\\
             \end{pmatrix}
             = A_1(y,z)+A_2(y,z), \quad (y,z)^T\in \mathbb{R}^2.
\end{equation}
The function $A$ is just a $(y,z)$-part of the drift from the previous example,
with the variable $x$ `frozen' at the level $x=x_0$.
We have
\ba
\langle A(y,z),(y,z)^T\rangle=-\e (y^{2n+2}+y^{2n+2})+(x_0-1)y^2+(x_0-1)z^2\lc - (y^{2}+z^{2})^{n+1}+1,
\ea
so that the drift $A$ is $(2n+1)$-dissipative.
Next, $A_1$ is one-sided Lipschitz continuous as in the previous example, whereas $A_2$ is linear.
Therefore, $A$ satisfies $\mathbf{H}_{A}^{\mathrm{Lip}_+}$.
It is easy to verify that
$\mathbf{H}_{A_x,A_{xx}}$ is satisfied with $\chi=2n$. That is, $A$ satisfies
all the assumptions required in Theorem \ref{t:P0} with $\kappa=2n+1, \chi=2n$.
\hfill $\Box$
\end{exa}

\begin{exa}
\label{exa:Lorenz-radial}
 Let $n\in\mathbb N$ and $\e\in(0,\infty)$ and
\ba
\label{e:Aphi}
A(x,y,z)
&=  -\e (x^2+y^2+z^2)^n\begin{pmatrix}
             x\\
             y\\
             z\\
           \end{pmatrix}
+\begin{pmatrix}
- ax- y^2-z^2\\
 -y+xy-bxz\\
-z+bxy+xz\\
           \end{pmatrix}\\
& = A_1(x,y,z)+A_2(x,y,z)
           , \quad (x,y,z)^T\in \mathbb{R}^3.
\ea
The drift $A$ is clearly $(2n+1)$-dissipative.
Remarkably, despite the quadratic terms appearing in $A_2$, the resulting drift $A$ is still one-sided Lipschitz continuous.
To show this, we denote $u=(x,y,z)^T$. Then, see \eqref{e:dnorm},
\ba
\label{e:nablaA}
\nabla A_1 &= -2n \e \|u\|^{2n-2} uu^T
-\e \|u\|^{2n}\mathrm{Id},\\
\nabla A_2& =
\begin{pmatrix}
-a & -2y & -2z\\
y- bz  &x-1& -bx\\
by+z & bx & x-1
\end{pmatrix},\\
[\nabla A]_\text{sym}
&=\nabla A_1
+\begin{pmatrix}
-a & -(y+bz)/2 & (by-z)/2\\
-(y+bz)/2 & x-1 & 0\\
(by-z)/2& 0 & x-1
\end{pmatrix}.
\ea
Let $\phi\in\bR^3$ be a test vector. Then,
\ba
\phi^T [\nabla A]_\text{sym}\phi & =- 2n\e\|u\|^{2n-2}(\phi^T u)^2 -\e\|u\|^{2n}\|\phi\|^2 + \phi^T [\nabla A_2]_\text{sym}\phi\\
&\leq -\e\|u\|^{2n}\|\phi\|^2 + \phi^T [\nabla A_2]_\text{sym}\phi.
\ea
Since $\nabla A_2$ is a linear function of the coordinates, we have
\ba
| \phi^T [\nabla A_2]_\text{sym}\phi|&\leq C\| [\nabla A_2]_\text{sym} \|\|\phi\|^2 \leq C \|u\|\|\phi\|^2
\ea
for some $C\in (0,\infty)$.
Therefore, there is $L\in (0,\infty)$ such that
\ba
\phi^T [\nabla A]_\text{sym}\phi \leq -\e\|u\|^{2n}\|\phi\|^2 +C\|u\|\|\phi\|^2\leq L\|\phi\|^2.
\ea
It is left to verify condition $\mathbf{H}_{A_x,A_{xx}}$. It follows from \eqref{e:nablaA} that $\chi=2n$.
Furthermore, the Hesse matrices of the components of $A_1$ have the growth order $\mathcal O(\|u\|^{2n-1})$.
The Hesse matrices of the components of $A_2$ have the growth order $\mathcal O(1)$. The Jacobi matrix $\nabla A_2$ is of linear growth
with respect to $\|u\|$. Combining these estimates with the formula for $\nabla A_1$, we get that for any test vector
$\phi\in\mathbb R^3$
\ba
\|\phi\| & \sum_{k=1}^3 \Big(\|\nabla^2 A^k_1\| + \|\nabla^2 A^k_2\|\Big) |\phi^k|
+
\langle \nabla A_1\phi,\phi\rangle
+\langle \nabla A_2 \phi,\phi\rangle \\
&\leq C(1+\|u\|^{2n-1})\|\phi\|^2  -\e\|u\|^{2n}\|\phi\|^2 + C(1+\|u\|)\|\phi\|^2\\
&\lc \|\phi\|^2.
\ea
This shows that the system $A$ satisfies the setting of our paper. Although there is no benchmark values, we present here the
simulation results for the system $A$, perturbed by additive isotropic
three-dimensional
Cauchy noise $Z$ with the characteristic function $\E \ex^{\i \langle \lambda, Z_t\rangle}=\ex^{-t\|\lambda\|}$, $\lambda\in\bR^3$.
Since the ODE $\dot \Phi=A(\Phi)$ cannot be solved analytically, the flow map $\Phi$
was evaluated numerically using the adaptive explicit Runge--Kutta Cash--Karp 5(4) method (\verb|runge_kutta_cash_karp54|)
provided by the Boost.Odeint C++ library.
\begin{table}[h]
\begin{center}
\begin{tabular}{|l|c|c|c|c|c|c|c|c|c|}
\hline
$h_n$
& $ \langle X^n_T \rangle$
& $ \langle Y^n_T       \rangle$
& $ \langle Z^n_T \rangle$
& $ \langle |X^n_T|^2     \rangle$
& $ \langle |Y^n_T|^2 \rangle$
& $ \langle |Z^n_T|^2 \rangle$
& $ \langle X^n_TY_T^n     \rangle$
& $ \langle X^n_T Z^n_T \rangle$
& $ \langle Y^n_TZ_T^n \rangle$
\\\hline
$10^{-2}$ &  $-0.670$ & 0.000 &0.000& 1.036& 0.391& 0.392& 0.000 &0.001& 0.001 \\\hline
$10^{-3}$ &  $-0.682$ & 0.000& 0.000& 1.075& 0.420& 0.420& 0.000& 0.000& 0.000 \\\hline
$10^{-4}$ &  $-0.692$ & 0.000 & 0.000& 1.091& 0.429& 0.429& 0.001& 0.000& 0.000  \\\hline
$10^{-5}$ &  $-0.694$ & 0.000 & 0.000 & 1.099& 0.431& 0.432& 0.000& 0.000 &0.000  \\\hline
\end{tabular}
\end{center}
\caption{Empirical first and second moments of the modified Lorenz-84 system
\eqref{e:Aphi} driven by isotropic Cauchy noise, computed using the splitting scheme
\eqref{e:Xndisc};
$N=10^6$, $T=5$, $n=2$,
$\e=0.1$, $a=0.25$, $b=4$.
\label{tab:Lorenz}}
\end{table}
The computations were performed on a virtual Linux machine (see Example~\ref{ex:main} for the specifications).
The run time grew nearly linearly, ranging from 27 seconds for \(h_n=10^{-2}\) to about 7.5 hours for \(h_n=10^{-5}\).
\end{exa}

\section{Properties of the mapping $\Phi$\label{s:Phi}}

In this section, we study properties of the mapping $\Phi$ determined by the ODE \eqref{e:ODE}.
First we show that despite the superlinear growth of the drift $A$, its
continuity together with the
one-sided Lipschitz condition guarantees existence, uniqueness and exponential growth of the solution $\Phi$.

\begin{lem}
Let assumption $\mathbf{H}^{\mathrm{Lip}_+}_A$ hold.
Then the ODE \eqref{e:ODE}
has a unique global solution that satisfies the following estimates for all $x\in\bR^d$ and $t\in[0,\infty)$:
\begin{align}
\label{e:Phi}
&\|\Phi(t,x)\|\leq  \|x\| \ex^{Lt} + \|A(0)\|\frac{\ex^{Lt}-1}{L},\\
\label{e:Phi-x}
&\|\Phi(t,x)-x\|\leq \|A(x)\|\frac{\ex^{Lt}-1}{L}.
\end{align}
\end{lem}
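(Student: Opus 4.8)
The plan is to combine Peano's existence theorem with a Gr\"onwall-type uniqueness argument and an $\varepsilon$-regularized energy estimate for the two quantitative bounds. For local existence, note that $\mathbf{H}^{\mathrm{Lip}_+}_A$ includes continuity of $A$, so Peano's theorem produces a solution of \eqref{e:ODE} on some maximal interval $[0,T^*)$. For uniqueness, if $\Phi_1,\Phi_2$ both solve \eqref{e:ODE} with the same initial value, the map $t\mapsto\|\Phi_1(t,x)-\Phi_2(t,x)\|^2$ is $C^1$ and, by the one-sided Lipschitz bound,
\[
\frac{\di}{\di t}\|\Phi_1(t,x)-\Phi_2(t,x)\|^2=2\langle A(\Phi_1(t,x))-A(\Phi_2(t,x)),\,\Phi_1(t,x)-\Phi_2(t,x)\rangle\le 2L\,\|\Phi_1(t,x)-\Phi_2(t,x)\|^2,
\]
so Gr\"onwall's inequality forces $\Phi_1\equiv\Phi_2$.

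For the estimate \eqref{e:Phi}, I would fix the solution on $[0,T^*)$ and, for $\varepsilon>0$, set $g_\varepsilon(t):=\sqrt{\|\Phi(t,x)\|^2+\varepsilon^2}$, which is $C^1$ and bounded below by $\varepsilon$. Using $\dot\Phi=A(\Phi)$ together with the decomposition $\langle A(\Phi),\Phi\rangle=\langle A(\Phi)-A(0),\Phi-0\rangle+\langle A(0),\Phi\rangle\le L\|\Phi\|^2+\|A(0)\|\,\|\Phi\|$ coming from $\mathbf{H}^{\mathrm{Lip}_+}_A$, and then bounding $\|\Phi\|\le g_\varepsilon$ and $\|\Phi\|^2\le g_\varepsilon^2$, one obtains
\[
\dot g_\varepsilon(t)=\frac{\langle A(\Phi(t,x)),\Phi(t,x)\rangle}{g_\varepsilon(t)}\le L\,g_\varepsilon(t)+\|A(0)\|,\qquad g_\varepsilon(0)=\sqrt{\|x\|^2+\varepsilon^2}.
\]
Gr\"onwall's inequality gives $g_\varepsilon(t)\le\sqrt{\|x\|^2+\varepsilon^2}\,\ex^{Lt}+\|A(0)\|\frac{\ex^{Lt}-1}{L}$, and letting $\varepsilon\downarrow0$ (with $\|\Phi(t,x)\|\le g_\varepsilon(t)$) yields \eqref{e:Phi} on $[0,T^*)$. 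This a priori bound keeps $\|\Phi\|$, hence $\|\dot\Phi\|=\|A(\Phi)\|$ (continuity of $A$), bounded on every bounded time interval, so the standard continuation argument rules out finite-time blow-up and $T^*=\infty$.

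The estimate \eqref{e:Phi-x} is obtained by the same device applied to the displacement from $x$: with $g_\varepsilon(t):=\sqrt{\|\Phi(t,x)-x\|^2+\varepsilon^2}$ and the decomposition $\langle A(\Phi),\Phi-x\rangle=\langle A(\Phi)-A(x),\Phi-x\rangle+\langle A(x),\Phi-x\rangle\le L\|\Phi-x\|^2+\|A(x)\|\,\|\Phi-x\|$, one gets $\dot g_\varepsilon\le L g_\varepsilon+\|A(x)\|$ with $g_\varepsilon(0)=\varepsilon$, and Gr\"onwall plus $\varepsilon\downarrow0$ give $\|\Phi(t,x)-x\|\le\|A(x)\|\frac{\ex^{Lt}-1}{L}$. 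The only genuinely delicate point is that $y\mapsto\|y\|$ fails to be differentiable at the origin, which prevents differentiating $\|\Phi(t,x)\|$ or $\|\Phi(t,x)-x\|$ directly; the regularization $\sqrt{\,\cdot\,+\varepsilon^2}$ removes this obstruction, after which everything reduces to the one-sided Lipschitz inequality with reference points $0$ and $x$ and a scalar Gr\"onwall estimate.
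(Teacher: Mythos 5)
Your proposal is correct, and the core estimates rest on exactly the same decompositions as the paper's proof: $\langle A(\Phi),\Phi\rangle=\langle A(\Phi)-A(0),\Phi\rangle+\langle A(0),\Phi\rangle$ for \eqref{e:Phi} and $\langle A(\Phi),\Phi-x\rangle=\langle A(\Phi)-A(x),\Phi-x\rangle+\langle A(x),\Phi-x\rangle$ for \eqref{e:Phi-x}, combined with the one-sided Lipschitz condition. The difference is in the surrounding machinery. The paper cites Lemma 3.19 of Pardoux--R\u{a}\c{s}canu for existence and uniqueness, works with the integral identity for $\|\Phi(t,x)\|^2$ (respectively $\|\Phi(t,x)-x\|^2$), and then invokes the quadratic integral inequality of Theorem 4.9 in Bainov--Simeonov to pass from the bound on the square to the stated linear bound. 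You instead prove well-posedness from scratch (Peano for local existence, the one-sided Lipschitz Gr\"onwall argument on $\|\Phi_1-\Phi_2\|^2$ for uniqueness, and the a priori bound plus continuation for globality), and you replace the cited integral inequality by the regularization $g_\varepsilon=\sqrt{\|\cdot\|^2+\varepsilon^2}$, which turns the estimate into a scalar linear differential inequality handled by the elementary Gr\"onwall lemma, with $\varepsilon\downarrow 0$ at the end. Your route is fully self-contained and avoids the external references at the cost of a slightly longer argument; the paper's is shorter by citation. Both are valid, and your treatment of the non-differentiability of the norm at the origin (the only delicate point) is handled correctly by the regularization.
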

\begin{proof}
1.  Existence and uniqueness of the ODE \eqref{e:ODE} is established in Lemma 3.19 in \cite{pardoux2014stochastic}.

\noindent
2. For the solution $\Phi$ we have:
\ba
\|\Phi(t,x)\|^2 & =\|x\|^2 +2\int_0^t \langle A(\Phi(s,x)),\Phi(s,x)\rangle\,\di s\\
&=\|x\|^2 +2\int_0^t \langle A(\Phi(s,x))-A(0),\Phi(s,x)-0\rangle\,\di s + 2\int_0^t \langle A(0),\Phi(s,x)\rangle\,\di s\\
&\leq \|x\|^2 + 2L\int_0^t \|\Phi(s,x)\|^2\,\di s + 2\|A(0)\|\int_0^t \|\Phi(s,x)\|\,\di s.
\ea
Hence by Theorem 4.9 in \cite{bainov2013integral}
\ba
\|\Phi(t,x)\|^2 & \leq \Big(\|x\| \ex^{Lt} + \|A(0)\|\int_0^t \ex^{Ls}\,\di s\Big)^2
\ea
and consequently,
\ba
\|\Phi(t,x)\|  & \leq  \|x\| \ex^{Lt} + \|A(0)\|\frac{\ex^{Lt}-1}{L}.
\ea

\noindent
3. Analogously, we write
\ba
\|\Phi(t,x)-x\|^2 & =2\int_0^t \langle A(\Phi(s,x)),\Phi(s,x)-x\rangle\,\di s\\
&=2\int_0^t \langle A(\Phi(s,x))-A(x),\Phi(s,x)-x\rangle\,\di s + 2\int_0^t \langle A(x),\Phi(s,x)-x\rangle\,\di s\\
&\leq  2L\int_0^t \|\Phi(s,x)-x\|^2\,\di s + 2\|A(x)\|\int_0^t \|\Phi(s,x)-x\|\,\di s,
\ea
so that as in 2.\ we obtain the estimate
\ba
\|\Phi(t,x)-x\|  & \leq \|A(x)\|\frac{\ex^{Lt}-1}{L}.
\ea
\end{proof}
\begin{cor}
Let assumptions $\mathbf{H}^{\mathrm{Lip}_+}_A$ and $\mathbf{H}_{A_x,A_{xx}}$ hold. Then, for
all $x\in\bR^d$,
$t\in [0,1]$ we have
\begin{align}
\label{e:Phi01}
\|\Phi(t,x)\|&\lc 1+\|x\|,\\
\label{e:Phi-x01}
\|\Phi(t,x)-x\|&\lc t(1+\|x\|^{\chi+1}).
\end{align}
\end{cor}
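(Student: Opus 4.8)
The plan is to derive both bounds directly from the two estimates \eqref{e:Phi} and \eqref{e:Phi-x} of the preceding lemma, exploiting only that $t$ ranges over the compact interval $[0,1]$ and that $\mathbf{H}_{A_x,A_{xx}}$ forces polynomial growth of $A$ itself.

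For \eqref{e:Phi01} I would simply observe that for $t\in[0,1]$ one has $\ex^{Lt}\le \ex^{L}$ and $\frac{\ex^{Lt}-1}{L}\le\frac{\ex^{L}-1}{L}$, so \eqref{e:Phi} yields $\|\Phi(t,x)\|\le \ex^{L}\|x\|+\|A(0)\|\frac{\ex^{L}-1}{L}$, and the right-hand side is $\lc 1+\|x\|$ since $L$ and $\|A(0)\|$ are fixed constants. This is immediate.

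For \eqref{e:Phi-x01} there are two ingredients. First, the elementary inequality $\ex^{s}-1\le s\,\ex^{s}$ for $s\ge 0$ (obtained by integrating $\ex^{u}$ over $[0,s]$), which for $t\in[0,1]$ gives $\frac{\ex^{Lt}-1}{L}\le t\,\ex^{Lt}\le \ex^{L}\,t$. Second, a polynomial growth bound on $A$: writing $A(x)=A(0)+\int_0^1 A_x(sx)\,x\,\di s$ and invoking \eqref{e:Ax-chi}, one gets $\|A(x)\|\le \|A(0)\|+\int_0^1 C(1+\|sx\|^{\chi})\,\|x\|\,\di s\le \|A(0)\|+C\|x\|+\frac{C}{\chi+1}\|x\|^{\chi+1}$, which is $\lc 1+\|x\|^{\chi+1}$ because $\chi\ge 0$ (so $\|x\|\le 1+\|x\|^{\chi+1}$). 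Plugging both into \eqref{e:Phi-x} gives $\|\Phi(t,x)-x\|\le \ex^{L}\,t\,\|A(x)\|\lc t(1+\|x\|^{\chi+1})$, as claimed.

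I do not expect a genuine obstacle here: everything reduces to the lemma plus the compactness of $[0,1]$. The only step that is not completely transparent from the lemma is the growth estimate $\|A(x)\|\lc 1+\|x\|^{\chi+1}$, and even that follows in one line from the assumed bound \eqref{e:Ax-chi} on $A_x$ via the fundamental theorem of calculus along the segment joining $0$ to $x$ (using that $A\in C^1$, which is part of $\mathbf{H}_{A_x,A_{xx}}$).
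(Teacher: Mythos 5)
Your proposal is correct and follows essentially the same route as the paper: the first bound is read off from \eqref{e:Phi} by bounding the exponentials on $[0,1]$, and the second combines \eqref{e:Phi-x} with exactly the ``integrated'' version of \eqref{e:Ax-chi}, namely $\|A(x)\|\leq_C 1+\|x\|^{\chi+1}$, together with $\ex^{Lt}-1\leq_C t$ on $[0,1]$. Your explicit fundamental-theorem-of-calculus derivation of the growth bound on $A$ just spells out what the paper leaves implicit.
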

\begin{proof}
The estimate \eqref{e:Phi01} follows immediately from \eqref{e:Phi}.
The estimate \eqref{e:Phi-x01} follows from \eqref{e:Phi-x} and the integrated estimate \eqref{e:Ax-chi}.
\end{proof}

\begin{lem}
Let assumptions $\mathbf{H}^{\mathrm{Lip}_+}_A$ and
$\mathbf{H}_{A_x,A_{xx}}$ hold. Then, for all $x\in\bR^d$, $t\in [0,1]$,
\begin{align}
\label{e:Phix01}
\|\Phi_x(t,x)\|&\lc 1,\\
\label{e:Phix-x01}
\|\Phi_x(t,x)-\mathrm{Id}\|&\lc   t (1+\|x\|^\chi).
\end{align}
\end{lem}
\begin{proof}
The gradient matrix $\Phi_x$ satisfies the matrix ODE
\ba
\dot \Phi_x(t,x)&= A_x(\Phi(t,x) )\Phi_x(t,x),\\
\Phi_x(0,x)&=\mathrm{Id}.
\ea
For each $\phi\in\bR^d$ we have
\ba
\frac{\di }{\di t}\|\Phi_{x}(t,x) \phi\|^2
& = \frac{\di }{\di t} \langle \Phi_{x}(t,x)\phi, \Phi_{x}(t,x)\phi\rangle\\
&=\langle \dot \Phi_{x}(t,x)\phi, \Phi_x(t,x)\phi\rangle + \langle \Phi_{x}(t,x)\phi, \dot \Phi_x(t,x)\phi\rangle\\
&=\langle (A_x(\Phi(t,x))+ A_x(\Phi(t,x)^T)   \Phi_{x}(t,x)\phi,  \Phi_{x}(t,x)\phi\rangle\\
&\leq 2 L\|\Phi_{x}(t,x) \phi\|^2.
\ea
Therefore by Gronwall's Lemma we obtain the estimate
\ba
\|\Phi_{x}(t,x)\|\leq_C \ex^{Lt},
\ea
which proves \eqref{e:Phix01}.
Furthermore using \eqref{e:Phi01} and \eqref{e:Phix01}  we get for $t\in [0,1]$:
\ba
\label{e:Phi1}
\|\Phi_{x}(t,x)-\mathrm{Id}\|&\leq \int_0^t \|A_x(\Phi(s,x))\| \|\Phi_x(s,x)\|\,\di s\\
&\leq_C t \max_{s\in[0,t]} \|A_x(\Phi(s,x))\|\\
&\leq_C t \max_{s\in[0,t]} (1+ \|\Phi(s,x)\|^\chi)\\
&\leq_C t (1+ (1+\|x\|)^\chi)\\
&\leq_C t (1+ \|x\|^\chi).
\ea
\end{proof}

\begin{lem}
Let assumptions $\mathbf{H}^{\mathrm{Lip}_+}_A$ and
$\mathbf{H}_{A_x,A_{xx}}$ hold. Then, for all $x\in\bR^d$, $t\in [0,1]$,
\begin{align}
\label{e:Phixx}
\|\Phi_{xx}(t,x)\|&\lc 1,\\
\label{e:Phixx-0}
\|\Phi_{xx}(t,x)\|&\lc   t (1+\|x\|^\chi).
\end{align}
\end{lem}
\begin{proof}
To prove \eqref{e:Phixx}, it is enough to show that, for $i,j=1,\dots,d$,
\ba\label{e:Phixxij}
\|\Phi_{x^ix^j}(t,x)\|\lc 1.
\ea
The derivative $\Phi_{x^ix^j}$ satisfies the ODE
\ba
\frac{\di }{\di t}\Phi_{x^ix^j}(t,x)&= \partial_{x^j}\Big(A_{x}(\Phi(t,x) )\Phi_{x^i}(t,x)\Big),\\
\Phi_{x^ix^j}(0,x)&=0.
\ea
Hence, for each coordinate $k=1,\dots, d$ we have
\ba
\label{e:Phixx1}
\frac{\di }{\di t} \Phi^k_{x^ix^j} &= \partial_{x^j}\Big(\sum_{l=1}^d A^k_{x^l}(\Phi )\Phi^l_{x^i} \Big)\\
&=\sum_{l,m=1}^d A^k_{x^lx^m}(\Phi )\Phi^m_{x^j} \Phi^l_{x^i} + \sum_{l=1}^d A^k_{x^l}(\Phi )\Phi^l_{x^ix^j}\\
&=\langle A^k_{xx}(\Phi)\Phi_{x^i}, \Phi_{x^j}\rangle +\langle  A^k_x(\Phi),\Phi_{x^ix^j}\rangle.
\ea
Denote by $Q_{ij}(t,x)$ the vector with the coordinates
\ba
Q^k_{ij}(t,x):=\langle A^k_{xx}(\Phi(t,x))\Phi_{x^i}(t,x), \Phi_{x^j}(t,x)\rangle,\quad k=1,\dots, d.
\ea
Then the ODE \eqref{e:Phixx1} can be rewritten in the following matrix form
\begin{equation}
\label{e:dPhixx}
\frac{\di }{\di t} \Phi_{x^ix^j}(t,x) = Q_{ij}(t,x) +A_x(\Phi(t,x))\Phi_{x^ix^j}(t,x)
\end{equation}
and therefore
\ba
\frac{\di }{\di t}\|\Phi_{x^ix^j}(t,x)\|^2 & = 2 \langle \frac{\di }{\di t} \Phi_{x^ix^j}(t,x) ,\Phi_{x^ix^j}(t,x) \rangle \\
&= 2\langle Q_{ij}(t,x) ,\Phi_{x^ix^j}(t,x) \rangle + 2\langle A_x(\Phi(t,x))\Phi_{x^ix^j}(t,x), \Phi_{x^ix^j}(t,x)\rangle.
\ea
By \eqref{e:Phix01} we have
\begin{equation}\label{e:Q0}
|Q^k_{ij}(t,x)|\leq \|A^k_{xx}(\Phi(t,x))\| \|\Phi_{x^i}(t,x)\| \|\Phi_{x^j}(t,x)\|
\lc \|A^k_{xx}(\Phi(t,x))\|,
\end{equation}
so that for some constant $C_*\in(0,\infty)$ we get
\begin{equation}
\label{e:Q}
\langle Q_{ij}(t,x) ,\Phi_{x^ix^j}(t,x) \rangle\leq C_*\sum_{k=1}^d\|A^k_{xx}(\Phi(t,x))\| |\Phi_{x^ix^j}^k(t,x)|.
\end{equation}
Let $\e\in(0,\infty)$ be the constant from Assumption \eqref{e:Axx} and
let $t\in[0,1]$. Recall that
$\Phi_{x^ix^j}(0,x)=0$.
If
\ba
\|\Phi_{x^ix^j}(t,x)\|\leq  C_*\eps^{-1},
\ea
then we have nothing to prove.
If
\ba
\|\Phi_{x^ix^j}(t,x)\|> C_*\eps^{-1},
\ea
then there exists
$t_*(x)\in(0,t)$ such that
$\|\Phi_{x^ix^j}(t_*(x),x)\|= C_*\e^{-1}$ and
\ba
\|\Phi_{x^ix^j}(s,x)\|\geq C_*\eps^{-1},\quad  s\in [t_*(x), t].
\ea
The latter inequality and assumption \eqref{e:Axx} applied to the mapping $s\mapsto \Phi_{x^ix^j}(s,x)$
on the interval
$s\in[t_*(x),t]$ yield
\ba
\frac{\di }{\di s}\|\Phi_{x^ix^j}(s,x)\|^2&=2\langle Q_{ij}(s,x) ,\Phi_{x^ix^j}(s,x) \rangle
+ 2\langle A_x(\Phi(s,x))\Phi_{x^ix^j}(s,x), \Phi_{x^ix^j}(t,x)\rangle\\
&\leq 2C_*\sum_{k=1}^d \|A^k_{xx}(\Phi(s,x))\| |\Phi_{x^ix^j}^k(s,x)|
+ 2\langle A_x(\Phi(s,x))\Phi_{x_ix_j}(s,x), \Phi_{x^ix^j}(s,x)\rangle\\
& \leq 2\eps\|\Phi_{x_ix_j}(s,x)\|\sum_{k=1}^d\|A^k_{xx}(\Phi(s,x))\| |\Phi^k_{x^ix^j}(s,x)|
+ 2\langle A_x(\Phi(s,x))\Phi_{x_ix_j}(s,x), \Phi_{x^ix^j}(s,x)\rangle\\
&\leq C_1\|\Phi_{x^ix^j}(s,x)\|^2
\ea
with some $C_1\in(0,\infty)$. Hence by the Gronwall Lemma we get that
\ba
\|\Phi_{x^ix^j}(t,x)\|^2\leq C_*^2\eps^{-2}\ex^{{C_1}(t-t_*)}\leq C_*^2\eps^{-2}\ex^{{C_1}t}\leq C_*^2\eps^{-2}\ex^{C_1},
\ea
which completes the proof of \eqref{e:Phixxij} and, consequently, \eqref{e:Phixx}.

Using \eqref{e:dPhixx}, \eqref{e:Phixxij}, and \eqref{e:Q0}, we get that
\ba
\|\Phi_{x^ix^j}(t,x)\|
=\|\Phi_{x^ix^j}(t,x)-\Phi_{x^ix^j}(0,x)\|
\lc \int_0^t\Big( \|A_{xx}(\Phi(s,x))\|+\|A_x(\Phi(s,x))\|\Big)\, \di s.
\ea
Then using \eqref{e:Ax-chi}, \eqref{e:Axx}, and repeating the estimates from \eqref{e:Phi1}, we get that
\ba
\|\Phi_{x^ix^j}(t,x)\|\lc t(1+\|x\|^\chi),
\ea
which proves \eqref{e:Phixx-0}.
\end{proof}

\section{Proofs of Theorems \ref{t:Xpdiss} and \ref{t:Xnpdiss}\label{s:3}}

We will use the L\'evy--It\^o representation of the L\'evy process $Z$ in the following form:
\ba
\label{e:Z}
Z_t=\int_0^t \int_{\|z\|\leq 1} z\widetilde N(\di z,\di s)  + \int_0^t \int_{\|z\|> 1} z\, N(\di z,\di s),
\ea
where $N(\di z,\di t)$ is a Poisson random measure on $(\bR^d\times \bR_+,\rB(\bR^d\times \bR_+))$ with the intensity measure
$\nu(\di z)\,\di t$, and $\widetilde N(\di z,\di t)=N(\di z,\di t)-\nu(\di z)\,\di t$
is the compensated Poisson random measure.
The stochastic integral with respect to $Z$ in \eqref{e:1}
is understood as a sum of stochastic integrals
with respect to $N$ and
$\widetilde N$, i.e.\ the SDE \eqref{e:1} can be equivalently rewritten as
\ba
X_t=x & + \int_0^t A(X_s)\,\di s + \int_0^t a(X_{s})\,\di s + \int_0^t b(X_{s})\,\di B_s\\
&+ \int_0^t\int_{\|z\|\leq 1} c(X_{s-})z\, \widetilde N(\di z,\di s) +
\int_0^t \int_{\|z\|>1} c(X_{s-})z\, N(\di z,\di s).
\ea

We deduce statements of both Theorem \ref{t:Xpdiss} and  Theorem \ref{t:Xnpdiss} from a general moment
estimate provided by \cite[Theorem 2.12]{KP21}. Specifically, existence and uniqueness of a local solution
of the SDE \eqref{e:1} is guaranteed, e.g., by Theorem 6.2.11 in \cite{Applebaum-09}. Applying \cite[Theorem 2.12]{KP21}
we get immediately that this solution is global and that the estimate \eqref{e:estXpX} holds,
which proves Theorem \ref{t:Xpdiss}.

To prove Theorem \ref{t:Xnpdiss}, we show that approximations $\{X^n\}$ defined in \eqref{e:Xncont},
while considered as continuous time semimartingales, satisfy conditions of \cite[Theorem 2.12]{KP21}
uniformly over $n\in\mathbb N$.

For each $n\in\mathbb N$,
we apply the It\^o formula to \eqref{e:Xncont} to see  that $X^n$ is
an It\^o semimartigale with the following representation:
\ba
\label{e:Xn}
X^n_t&= x + \int_{0}^t A(X^n_s)\,\di s
+ \int_0^t \Phi_x(\delta^n_s,Y^n_s) a(X^{(n)}_s)\,\di s\\
&+ \int_0^t \Phi_x(\delta^n_s,Y^n_s) b(X^{(n)}_s) \,\di B_s
+\frac12 \int_0^t  \operatorname{tr}(\Phi_{xx}(\delta^n_s,Y^n_s)bb^T(X^{(n)}_s ))\, \di s\\
&+\int_0^t\int_{|z|\leq 1} \Big[ \Phi(\delta^n_s, Y^n_{s} +  c(X^{(n)}_s)z) -
\Phi(\delta^n_s, Y^n_{s}) -\Phi_x (\delta^n_s, Y^n_s) c(X^{(n)}_s)z \Big]\,\nu(\di z)\,\di s\\
&+ \int_0^t\int_{|z|\leq 1} \Big[ \Phi(\delta^n_s, Y^n_{s-} +  c(X^{(n)}_s   )z) -
\Phi(\delta^n_s, Y^n_{s-})
\Big]\,\widetilde N(\di z,\di s)\\
&+ \int_0^t\int_{|z|> 1} \Big[ \Phi(\delta^n_s, Y^n_{s-} +  c(X^{(n)}_s   )z) -
\Phi(\delta^n_s, Y^n_{s-})
\Big]\,N(\di z,\di s).
\ea
We rewrite \eqref{e:Xn} in the canonical L\'evy--It\^o form:
\ba
\label{e:Xncanon}
X^n_t = x
+ \int_0^t a^n_s\,\di s
+ \int_0^t b^n_s\,\di B_s
+ \int_0^t\int_{\bR^d} \zeta \Big( N^n(\di \zeta, \di s) - \bI(\|\zeta\|\leq 1) \nu^n(\di \zeta, \di s)\Big)
\ea
with the local progressively measurable characteristics
\ba
a^n_s & = A(X^n_s) + \Phi_x(\delta^n_s,Y^n_s) a(X^{(n)}_s)
+ \frac12 \operatorname{tr}(\Phi_{xx}(\delta^n_s,Y^n_s)bb^T(X^{(n)}_s))\\
&+\int_0^t\int_{|z|\leq 1} \Big[ \Phi(\delta^n_s, Y^n_s +  c(X^{(n)}_s)z) -
\Phi(\delta^n_s, Y^n_s) -\Phi_x (\delta^n_s, Y^n_s) c(X^{(n)}_t)z \Big]\,\nu(\di z)\,\di s,\\
b^n_s&= \Phi_x(\delta^n_s,Y^n_s) b(X^{(n)}_s) ,\\
\ea
and the random jump measure $N^n$ with
the
predictable characteristics
\ba
\nu^n(\di \zeta,\di s)&= K^n_{s-}(\di \zeta)\,\di s,\\
\ea
where
\ba
K^n_s(U) &= \nu(z\in \bR^d\backslash\{0\}
\colon \Phi(\delta^n_s, Y^n_{s} + c(X^{(n)}_s)z) - \Phi(\delta^n_s, Y^n_{s}) \in U),\quad
U\in \rB(\bR^n).
\ea
We check the assumptions of Theorem 2.12 in \cite{KP21} for the terms of the canonical decomposition \eqref{e:Xncanon}.

Assumption $\mathbf{A}_M$
holds true  since for all $s\in[0,\infty)$ and $n\in\bN$
\ba
\|b^n_s\|\leq_C \sup_{u\in[0,h_n]}\sup_{x\in\bR^d}\|\Phi_x(u,x)\|\|b\| \leq_C 1\quad \text{a.s.}
\ea
For $s\in[0,\infty)$ we denote
\ba
\label{e:Psi}
\Psi^n_s(z)&=\Phi(\delta^n_s, Y^n_{s} + c(X^{(n)}_s)z) - \Phi(\delta^n_s, Y^n_{s})
=\int_0^1 \Phi_x(\delta^n_s, Y^n_{s} + \theta c(X^{(n)}_s)z )c(X^{(n)}_s) z\,\di \theta,
\ea
so that the following estimate holds:
\ba
\|\Psi^n_s(z)\|&\leq  \sup_{s\in[0,h_n]}\sup_{x\in\bR^d}\|\Phi_x(s,x)\|\|c\|\|z\|\leq_C\|z\|.
\ea
Assumption $\mathbf{A}_{\nu,\leq 1}$ is satisfied because
\ba
\int_{\|\zeta\|\leq 1} \|\zeta\|^2\, K^n_s(\di \zeta)
&=\int_{\bR^d} \bI( \| \Psi^n_s(z)\|\leq 1) \| \Psi^n_s(z)  \|^2 \, \nu( \di z)\\
&\leq \int_{\bR^d} 1\wedge \| \Psi^n_s(z)  \|^2 \, \nu( \di z)
\leq_C \int_{\bR^d} 1\wedge \| z \|^2 \, \nu( \di z)\leq_C 1 \quad \text{a.s.}
\ea
Assumption $\mathbf{A}_{\nu,p}$ is satisfied because
\ba
\int_{\|\zeta\|> 1} \|\zeta\|^p\, \nu^n(\di \zeta)
=\int_{\bR^d} \bI( \|  \Psi^n_s(z)\|> 1) \| \Psi^n_s(z)  \|^p \, K_s^n( \di z)
\leq_C \int_{\|z\|>1} \| z \|^p \, \nu( \di z)\leq_C 1\quad \text{a.s.}
\ea
Finally, we estimate the drift term. With the help of \eqref{e:Phix01} and \eqref{e:Phixx} we get for $n$ large enough:
\ba
&\|\Phi_x(\delta^n_s,Y^n_s) a(X^{(n)}_s)\|
\leq_C 1,\\
&\|\operatorname{tr}(\Phi_{xx}(\delta^n_s,Y^n_s)bb^T(X^{(n)}_s))\|\leq_C 1,\\
&\int_{|z|\leq 1} \Big[ \Phi(\delta^n_s, Y^n_{s} + c(X^{(n)}_s)z)  -
X^n_s -\Phi_x (\delta^n_s, Y^n_s)c(X^{(n)}_s)z
\Big]\,\nu  (\di z)\\
&\leq \frac12 \|c\| \int_{|z|\leq 1}\|z\|^2 \|\Phi_{xx}(\delta^n_s, \theta^n_s(z))\|\,\di z \leq_C 1.
\ea
Hence the drift $a^n_s$ is locally bounded (condition $\mathbf{A}_{a,\textrm{loc}}$). Furthermore,
\ba
\langle X^n_s , a^n_s \rangle
&\leq -C_1 \|X_s^n\|^{1+\kappa} + C_2 + C_3 \|X^n_s\|\\
&\leq  -C_1\|X_s^n\|^{1+\kappa}+ C_4,
\ea
so that the drift $a^n_s$ is also dissipative with the dissipation rate $\kappa$ (condition $\mathbf{A}_{a,\kappa}$). Finally,
the balance condition $p+\kappa>1$ (condition $\mathbf{A}_\mathrm{balance}$) holds
because $p\in(0,\infty)$
and $\kappa\in(1,\infty)$.

To finish the proof we note that the constant $C$ in the statement of \cite[Theorem 2.12]{KP21}
depends only on the constants in the conditions
$\mathbf{A}_M$, $\mathbf{A}_{\nu,\leq 1}$, $\mathbf{A}_{\nu,p}$ and $\mathbf{A}_{a,\kappa}$.
Thus, the estimate (2.30) in \cite{KP21} yields a uniform bound for the family $\{X^n\}$.

\section{Proof of Theorem \ref{t:thm_q}\label{s:4}}

Let $\widehat Z$ and $Q$ be the large and small jump components of $Z$ appearing in the L\'evy--It\^o decomposition \eqref{e:Z}:
\ba
\widehat Z_t:=\int_0^t \int_{\|z\|\leq 1} z\widetilde N(\di z,\di s),\quad Q_t:= \int_0^t \int_{\|z\|> 1} z\, N(\di z,\di s).
\ea
Let $\{\tau_k,J_k\}_{k\in \bN}$ be the
sequence of the arrival times and jump sizes of the compound Poisson process $Q$.
It is well known that the process $\widehat Z$ and the sequences $\{\tau_k\}_{k\in \bN}$ and $\{J_k\}_{k\in \bN}$
are independent.
The process $Q$ has jump intensity $\lambda=\nu(\|z\|>1)$. We
assume that $\lambda\in(0,\infty)$, otherwise the arguments below can be substantially simplified.

For $T\in[0,\infty)$, we denote by
\ba
N_T=N(\{\|z\|>1\}\times [0,T])
\ea
the number of jumps of $Q$ on the time interval $[0, T]$. It is well known that $N_T$
has the Poisson distribution with the parameter $T\lambda$, and, conditioned on the event
$\{N_T=N\}$, $N\in \bN$, the jump times $\{\tau_k\}_{1\leq k\leq N}$ have the law of the order statistics
of the uniform distribution on $[0,T]$, see, e.g., Proposition 3.4 in \cite{Sato-99}.

For $N\in\bN_0$ and $0<t_1<\dots< t_N<T$, by $\P^{(N)}_{t_1,\dots, t_N}(\cdot)$ we denote the regular conditional probability
\ba\label{e:cPP}
\P^{(N)}_{t_1,\dots, t_N}(\cdot )=\P(\cdot |N_T=N, \tau_1=t_1,\dots, \tau_N=t_N).
\ea
Under $\P^{(N)}_{t_1,\dots, t_N}$,
the driving noise for both processes $X^n$ and $X$ is given by the Brownian motion $B$ and the independent L\'evy process
$\widehat Z$, interlaced by large
jumps $J_1,\dots, J_N$ at fixed time instants $t_1,\dots, t_N$.
Under $\P^{(N)}_{t_1,\dots, t_N}$, the jump sizes $J_1,\dots, J_N$ are iid random vectors, independent of
$B$ and $\widehat Z$, with the probability law
$\lambda^{-1}\nu(\cdot \cap \{\|z\|>1\})$.

We also denote $t_0:=0$ and $t_{N+1}:=T$. Note that for $N=0$,
the actual large jumps are absent, and the estimates below are valid on the entire time interval $[t_0, t_{1}]=[0, T]$.
For $N\in\bN$, the large jump at time $t_{N+1}=T$ a.s. does not occur either, so that
estimates below are valid on the closed time interval $[t_N, t_{N+1}]$. Finally, we mention that for all $N\in\bN_0$ and $k=0,\dots, N+1$, $\Delta \widehat Z_{t_k}=0$ $\P^{(N)}_{t_1,\dots, t_N}$-a.s.

The SDE with bounded jumps and one-sided Lipschitz continuous drift
\ba
\label{e:Itohat}
\widehat X_t=x  + \int_0^t A(\widehat X_s)\,\di s + \int_0^t a(\widehat X_{s})\,\di s + \int_0^t b(\widehat X_{s})\,\di B_s
+ \int_0^t\int_{\|z\|\leq 1} c(\widehat X_{s-})z\, \widetilde N(\di z,\di s)
\ea
has a unique strong solution following the same argument as the one presented in the previous section for the original SDE \eqref{e:1}.
The solution $X$ to \eqref{e:1} can be obtained by interlacing the solution $\widehat X$ with the large jumps
that occur at the arrival times of the compound Poisson process $Q$,
see, e.g., Section 3.5 in \cite{Kunita-04}.

We introduce the following processes (recall \eqref{e:ns} and \eqref{e:Xncont}):
\ba
\Delta_t^n&=X_t^n-X_t, \quad \Xn_t=X^n_{\eta_t^n}, \quad \XN_t=\Phi(\delta_t^n, \Xn_t),\\
Y^n_t&=X^{(n)}_t+a(X^{(n)}_t)\delta^n_t+b(X^{(n)}_t)B^n_t+c(X^{(n)}_t)Z^n_t.
\ea
Denote also
\ba
U^n_t=h_n+\|B_t^n\|+\|Z_t^n\|.
\ea
Let us give some preparatory estimates. First, combining \eqref{e:Phi01} and \eqref{e:Phi-x01} with the elementary inequality
\eqref{e:max}, we get that for any $\gamma\in[0,1]$
\begin{equation}
\label{e:Phig}
  \|\Phi(t,x)-x\|\leq_C t^\gamma(1+\|x\|^{\gamma\chi+\gamma})(1+\|x\|^{1-\gamma})\leq_C t^\gamma(1+\|x\|^{\gamma\chi+1}).
\end{equation}
Similarly, from \eqref{e:Phix01} and \eqref{e:Phix-x01}, and \eqref{e:Phixx} and \eqref{e:Phixx-0}
we get
\begin{align}\label{e:Phih}
  \|\Phi_x(t,x)-\mathrm{Id}\|\lc t^\gamma(1+\|x\|^{\gamma\chi}),
  \\\label{e:Phij}
  \|\Phi_{xx}(t,x)\|\lc t^\gamma(1+\|x\|^{\gamma\chi}).
\end{align}
Next, we have the following elementary estimates.
\begin{lem}
\label{lA10}
We have
\begin{align}
\label{estimates}
\|X_t^n-\XN_t\|&\leq_C U^n_t,\\
\label{estimates2}
\|\Phi_x(\delta_t^n, Y_t^n)-\Phi_x(\delta_t^n, \Xn_t)\| &\leq_C U^n_t,
\end{align}
and for any $\gamma\in [0,1]$ we have
\begin{align}
\label{estimates1}
\|\XN_t-\Xn_t\|&\leq_C h_n^{\gamma}(1+\|X^{(n)}_t\|^{\gamma\chi+1}),\\
\label{estimates0}
\|X^n_t-\Xn_t\|&\leq_C h_n^{\gamma}(1+\|X^{(n)}_t\|^{\gamma\chi+1})  + U^n_t.
\end{align}
\end{lem}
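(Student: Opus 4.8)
The plan is to obtain all four bounds as immediate consequences of the estimates for $\Phi$ collected in Section~\ref{s:Phi} together with the boundedness of $a,b,c$ from $\mathbf{H}_{a,b,c}^{\mathrm{Lip}_b}$; in particular, nothing here uses dissipativity.

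First I would record two structural observations. Since $\delta^n_t\in[0,h_n]$ and $h_n\leq 1$ for $n$ large, estimate \eqref{e:Phix01} gives $\|\Phi_x(\delta^n_t,\cdot)\|\leq_C 1$, so the map $x\mapsto\Phi(\delta^n_t,x)$ is globally Lipschitz with a constant uniform in $t$ and $n$; likewise \eqref{e:Phixx} makes $x\mapsto\Phi_x(\delta^n_t,x)$ globally Lipschitz with a $t,n$-uniform constant. Second, directly from the definition of $Y^n$ one has $Y^n_t-\Xn_t=a(\Xn_t)\delta^n_t+b(\Xn_t)B^n_t+c(\Xn_t)Z^n_t$, so boundedness of $a,b,c$ yields $\|Y^n_t-\Xn_t\|\leq_C h_n+\|B^n_t\|+\|Z^n_t\|=U^n_t$.

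With these in hand, \eqref{estimates} follows because $X^n_t-\XN_t=\Phi(\delta^n_t,Y^n_t)-\Phi(\delta^n_t,\Xn_t)$, and the uniform Lipschitz property of $\Phi(\delta^n_t,\cdot)$ bounds this by $C\|Y^n_t-\Xn_t\|\leq_C U^n_t$. Estimate \eqref{estimates2} is the same computation with $\Phi_x(\delta^n_t,\cdot)$ in place of $\Phi(\delta^n_t,\cdot)$. For \eqref{estimates1} I would apply \eqref{e:Phig} with $x=\Xn_t$ and time $\delta^n_t\leq h_n$, obtaining $\|\XN_t-\Xn_t\|=\|\Phi(\delta^n_t,\Xn_t)-\Xn_t\|\leq_C (\delta^n_t)^\gamma(1+\|\Xn_t\|^{\gamma\chi+1})\leq_C h_n^\gamma(1+\|\Xn_t\|^{\gamma\chi+1})$ for every $\gamma\in[0,1]$. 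Finally \eqref{estimates0} is just the triangle inequality $\|X^n_t-\Xn_t\|\leq\|X^n_t-\XN_t\|+\|\XN_t-\Xn_t\|$ combined with \eqref{estimates} and \eqref{estimates1}.

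I do not expect a genuine obstacle: the lemma is a \emph{bookkeeping} step that repackages the $\Phi$-bounds of Section~\ref{s:Phi} into the form needed later. The only mild point is that $\delta^n_t$ ranges over all of $[0,h_n]$, including $0$, so the constants must be uniform there; this is automatic, since the cited estimates hold for every $t\in[0,1]$ and the relevant ones are monotone in the time argument.
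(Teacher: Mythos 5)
Your proposal is correct and follows essentially the same route as the paper: boundedness of $\Phi_x$ (via \eqref{e:Phix01}) and of $a,b,c$ for \eqref{estimates}, boundedness of $\Phi_{xx}$ (via \eqref{e:Phixx}) for \eqref{estimates2}, the interpolated bound \eqref{e:Phig} applied at $x=\Xn_t$, $\delta^n_t\leq h_n$ for \eqref{estimates1}, and the triangle inequality for \eqref{estimates0}. Nothing is missing.
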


\begin{proof}
1. We take into account the boundnedness of $\Phi_x$ (see \eqref{e:Phix01}) and the boundnedness of the coefficients $a$, $b$, $c$
to get \eqref{estimates}:
\ba
\|X_t^n-\XN_t\|&=\|\Phi(\delta_t^n, Y_t^n)-\Phi(\delta_t^n, \Xn_t)\|\\
&\leq \|\Phi_x\|\|Y_t^n-\Xn_t\|\\
&\leq_C \|a(X^{(n)}_t)\delta^n_t+b(X^{(n)}_t)B^n_t+c(X^{(n)}_t)Z^n_t\|\\
&\lc U^n_t.
\ea
2. Similarly, the boundnedness of $\Phi_{xx}$ (see \eqref{e:Phixx}) yields
 \eqref{estimates2}.

\noindent
3. Since
\ba
\XN_t-\Xn_t=\Phi(\delta_t^n, \Xn_t)-\Xn_t,
\ea
\eqref{estimates1} follows immediately by \eqref{e:Phig}.

\noindent
4. Finally,
\ba
\|X^n_t-\Xn_t\|\leq  \|X^n_t-\XN_t\|\ + \|\XN_t -\Xn_t\|
\leq h_n^{\gamma}(1+\|X^{(n)}_t\|^{\gamma\chi+1})  + U^n_t.
\ea
\end{proof}

We will need the following auxiliary moment estimate.

 \begin{lem}
\label{l:estXn}
For any $r\in(0,\infty)$, any $T\in[0,\infty)$, any $N\in\bN_0$ and all $k=0,\dots,N$ the following estimate holds:
\ba
\label{e:estXn}
\sup_{t\in[t_k,t_{k+1})}
\E^{(N)}_{t_1,\dots,t_N} \Big[\|X_t^n\|^r \Big|\rF_{t_k}\Big] \leq_C 1+\|X_{t_k}^n\|^r
\ea
uniformly over $n\in\bN$.
\end{lem}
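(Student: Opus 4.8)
\emph{Strategy.} The plan is to reduce to even exponents and then propagate the bound cell by cell along the grid $\{t^n_l\}$, using a one-step estimate with an $\mathcal O(h_n)$ error. First, by Jensen's inequality under the conditional expectation it suffices to prove \eqref{e:estXn} for $r=2m$, $m\in\bN$; general $r\in(0,\infty)$ then follows by applying $x\mapsto x^{r/(2m)}$ with $2m=2\lceil r/2\rceil$. Throughout, the essential simplification is that on $[t_k,t_{k+1})$, under $\P^{(N)}_{t_1,\dots,t_N}$, the scheme is driven only by $B$ and by the small-jump component $\widehat Z$, whose increments possess every moment; the large jump at $t_k$ is already incorporated into the $\rF_{t_k}$-measurable value $X^n_{t_k}$.

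\emph{One-step estimate.} The main step is to show that there is $C$, depending only on $m$, $T$ and the constants in the assumptions, such that for every grid point $t^n_l$ and every $t\in(t^n_l,t^n_{l+1}]$ with $(t^n_l,t]\cap\{t_1,\dots,t_N\}=\emptyset$,
\be\label{e:onestep}
\E^{(N)}_{t_1,\dots,t_N}\big[\|X^n_t\|^{2m}\,\big|\,\rF_{t^n_l}\big]\leq (1+Ch_n)\|X^n_{t^n_l}\|^{2m}+Ch_n .
\ee
Here $X^n_t=\Phi(\delta^n_t,Y^n_t)$ with $Y^n_t=v+w$, where $v=X^n_{t^n_l}+a(X^n_{t^n_l})\delta^n_t$ is $\rF_{t^n_l}$-measurable and $w=b(X^n_{t^n_l})B^n_t+c(X^n_{t^n_l})Z^n_t$. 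Since $(t^n_l,t]$ carries no large jump, under $\P^{(N)}_{t_1,\dots,t_N}$ the increment $Z^n_t$ equals the compensated small-jump increment $\widehat Z_t-\widehat Z_{t^n_l}$, so $w$ is centred, independent of $\rF_{t^n_l}$, and $\E^{(N)}_{t_1,\dots,t_N}[\|w\|^\ell\,|\,\rF_{t^n_l}]\lc h_n^{\ell/2}+h_n\lc h_n$ for every $\ell\geq 2$ (for the jump part one uses $\int_{\|z\|\leq 1}\|z\|^\ell\nu(\di z)<\infty$, valid for all $\ell\geq 2$). Expanding $\|v+w\|^{2m}=(\|v\|^2+2\langle v,w\rangle+\|w\|^2)^m$, the term $\langle v,w\rangle$ has vanishing $\rF_{t^n_l}$-conditional expectation, while every remaining monomial carries a factor $\|w\|^j$ with $j\geq 2$ and a factor $\|v\|^i$ with $i\leq 2m-2$; since $\|v\|^{2m}\leq(1+Ch_n)\|X^n_{t^n_l}\|^{2m}+Ch_n$, this gives $\E^{(N)}_{t_1,\dots,t_N}[\|Y^n_t\|^{2m}\,|\,\rF_{t^n_l}]\leq(1+Ch_n)\|X^n_{t^n_l}\|^{2m}+Ch_n$. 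Finally, $\mathbf{H}^{\mathrm{Lip}_+}_A$ yields $\langle A(y),y\rangle\leq L\|y\|^2+\|A(0)\|\,\|y\|$, hence $\frac{\di}{\di s}\|\Phi(s,y)\|^{2m}\lc\|\Phi(s,y)\|^{2m}+1$ and, by Gronwall, $\|\Phi(\delta^n_t,y)\|^{2m}\leq(1+Ch_n)\|y\|^{2m}+Ch_n$ for $\delta^n_t\leq h_n$; combining proves \eqref{e:onestep}.

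\emph{The cell of $t_k$ and chaining.} Let $t^n_{l_0}=\eta^n_{t_k}$. For $t\in[t_k,t^n_{l_0+1}]$ one has $\delta^n_t=\delta^n_{t_k}+(t-t_k)$ and $Y^n_t=Y^n_{t_k}+\Delta_t$ with $\Delta_t=a(X^n_{t^n_{l_0}})(t-t_k)+b(X^n_{t^n_{l_0}})(B_t-B_{t_k})+c(X^n_{t^n_{l_0}})(Z_t-Z_{t_k})$, so by the flow property of the ODE \eqref{e:ODE}, $X^n_t=\Phi\big(t-t_k,\Phi(\delta^n_{t_k},Y^n_{t_k}+\Delta_t)\big)$. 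Using $\|\Phi(s,x)-\Phi(s,y)\|\leq\ex^{Ls}\|x-y\|$ (from $\mathbf{H}^{\mathrm{Lip}_+}_A$ and Gronwall), $\Phi(\delta^n_{t_k},Y^n_{t_k})=X^n_{t_k}$ and \eqref{e:Phi}, one gets $\|X^n_t\|\lc 1+\|X^n_{t_k}\|+\|\Delta_t\|$; since $Z_t-Z_{t_k}$ is a small-jump increment and $t-t_k\leq h_n$, raising to the power $2m$ and taking the $\rF_{t_k}$-conditional expectation gives
\be\label{e:firstcell}
\E^{(N)}_{t_1,\dots,t_N}\big[\|X^n_t\|^{2m}\,\big|\,\rF_{t_k}\big]\lc 1+\|X^n_{t_k}\|^{2m},\qquad t\in[t_k,t^n_{l_0+1}].
\ee
Now fix $t\in[t_k,t_{k+1})$ and set $t^n_j=\eta^n_t\geq t^n_{l_0}$. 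If $t^n_j=t^n_{l_0}$, \eqref{e:firstcell} is the assertion. Otherwise $t^n_{l_0+1}\leq t^n_j<t<t_{k+1}$, so all cells between $t^n_{l_0+1}$ and $t$ avoid $\{t_1,\dots,t_N\}$; iterating \eqref{e:onestep} and using the tower property, $\E^{(N)}_{t_1,\dots,t_N}[\|X^n_t\|^{2m}\,|\,\rF_{t^n_{l_0+1}}]\leq(1+Ch_n)^n\|X^n_{t^n_{l_0+1}}\|^{2m}+Ch_n\sum_{i=0}^n(1+Ch_n)^i$, where, since $h_n=T/n$, both $(1+Ch_n)^n\leq\ex^{CT}$ and $h_n\sum_{i=0}^n(1+Ch_n)^i\leq T\ex^{CT}$ are bounded uniformly in $n$. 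Conditioning once more on $\rF_{t_k}$ and applying \eqref{e:firstcell} with $t=t^n_{l_0+1}$ yields $\E^{(N)}_{t_1,\dots,t_N}[\|X^n_t\|^{2m}\,|\,\rF_{t_k}]\lc 1+\|X^n_{t_k}\|^{2m}$, uniformly in $t\in[t_k,t_{k+1})$, $n\in\bN$, $N\in\bN_0$ and $k$; taking the supremum over $t$ and using Jensen for general $r$ completes the proof. (The finitely many $n$ with $h_n$ not small are handled by the same scheme with cruder constants.)

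\emph{Main obstacle.} The delicate point is that the one-step error in \eqref{e:onestep} has to be genuinely $\mathcal O(h_n)$ uniformly in $n$, as it is summed over $\sim T/h_n$ cells: the crude bound $\|X^n_{t^n_{l+1}}\|\leq\ex^{Lh_n}\|X^n_{t^n_l}\|+C(h_n+\|B^n_t\|+\|Z^n_t\|)$, raised to the power $2m$, would after summation leave a divergent term of order $h_n^{-1/2}$ coming from $\sum_l\|B^n_{t^n_{l+1}}\|$. This is exactly why the argument is set up with even powers (so that in $\|v+w\|^{2m}$ the centred cross term $\langle v,w\rangle$ drops out under conditioning) and why it is crucial that, the large jumps having been conditioned away, every surviving noise increment has all moments of order $\geq 2$ of size $\mathcal O(h_n)$.
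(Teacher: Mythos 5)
Your proof is correct, but it takes a genuinely different route from the paper. The paper works in continuous time: it applies the It\^o formula with $f(x)=\|x\|^r$, $r\ge 2$, to the semimartingale representation of $X^n$ (the same representation used for Theorem \ref{t:Xnpdiss}), bounds the resulting drift by $C(1+\|X^n_t\|^r)$ using the one-sided Lipschitz condition, the boundedness of $\Phi_x,\Phi_{xx},a,b,c$ and a Taylor estimate for the small-jump compensator, and concludes by localization, Gronwall and Fatou on the whole interval $[t_k,t_{k+1})$. You instead run a discrete, per-grid-cell recursion: reduction to even powers $r=2m$, a one-step bound $\E[\|X^n_t\|^{2m}\,|\,\rF_{t^n_l}]\le(1+Ch_n)\|X^n_{t^n_l}\|^{2m}+Ch_n$ obtained by expanding $\|v+w\|^{2m}$ (so the centred cross term drops out and all noise contributions carry moments of order $\ge 2$, hence are $\mathcal O(h_n)$), a Gronwall bound for the flow $\Phi$ over one step, a separate crude bound on the partial cell containing $t_k$ via the flow property and the contraction $\|\Phi(s,x)-\Phi(s,y)\|\le \ex^{Ls}\|x-y\|$, and chaining with $(1+Ch_n)^n\le \ex^{CT}$. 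This is more elementary (no It\^o formula for $\|x\|^r$, no localization/Fatou) but requires the even-power trick and careful cell bookkeeping, whereas the paper's argument is shorter because the semimartingale representation is needed anyway for the main error analysis. Two small points: your intermediate bound on the first cell should be stated for $t\in[t_k,\,t^n_{l_0+1}]$ with $t<t_{k+1}$ (if $t_{k+1}$ falls in the same grid cell, $Z_t-Z_{t_k}$ is no longer a small-jump increment and $2m$-moments may be infinite); this is exactly the range in which you actually use it, since in the chaining case $t^n_{l_0+1}<t_{k+1}$, so nothing breaks. Likewise your parenthetical treatment of the finitely many $n$ with $h_n$ large is needed because your constants are only uniform for $h_n$ small, while the paper's argument gives uniformity in $n$ directly.
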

\begin{proof}
It is sufficient to consider $r\in[2,\infty)$.  Applying the It\^o formula with $f(x)=\|x\|^r$ to the semimartingale
\eqref{e:Xn} we get for $t\in [t_k, t_{k+1})$ the following representation:
\ba
\label{ItoXn}
\|X_t^n\|^r
&=\|X_{t_k}^n\|^r+r \int_{t_k}^t \|X_s^n\|^{r-2}\langle X_s, A(X_s^n) \rangle \,\di s
+  r \int_{t_k}^t\|X_s^n\|^{r-2} \langle X_s^n, \Phi_x(\delta^n_s,Y^n_s) a(X^{(n)}_s) \rangle  \,\di s\\
&+ \frac{r}{2}
\int_{t_k}^t\|X_s^n\|^{r-2} \langle X_s^n,  \operatorname{tr}(\Phi_{xx}(\delta^n_s,Y^n_s)  bb^T(X^{(n)}_s )) \rangle  \,\di s\\
&+\frac{r(r-1)}{2} \int_{t_k}^t\|X_s^n\|^{r-4}
\|(\Phi_x(\delta^n_s,Y^n_s) b(X^{(n)}_s))^T X^n_s\|^2\, \di s\\
&+\frac{r}{2} \int_{t_k}^t\|X_s^n\|^{r-2}
\|  \Phi_x(\delta^n_s,Y^n_s) b(X^{(n)}_s)  \|^2 \, \di s\\
&+
 r \int_{t_k}^t\|X_s^n\|^{r-2} \langle X_s^n,
\int_{\|z\|\leq 1}
  \Big[ \Phi(\delta^n_s, Y^n_s +  c(X^{(n)}_s)z) -
\Phi(\delta^n_s, Y^n_s) -\Phi_x (\delta^n_s, Y^n_s) c(X^{(n)}_s)z \Big]
\,\nu(\di z) \rangle  \,\di s\\
&+
\int_{\|z\|\leq 1}
\Big( \|X_{s}^n+  \Phi(\delta^n_s, Y^n_s +  c(X^{(n)}_s   )z) - \Phi(\delta^n_s, Y^n_{s})   \|^r-\|X_{s}^n\|^r\\
&\qquad\qquad -r\|X_{s}^n\|^{r-2} \langle X_{s}^n,  \Phi(\delta^n_s, Y^n_s +  c(X^{(n)}_s   )z - \Phi(\delta^n_s, Y^n_{s}) \rangle
\Big)\, \nu(\di z)\, \di s\\
&+r \int_{t_k}^t\|X_s^n\|^{r-2} \langle X_s^n ,  \Phi_x(\delta^n_s,Y^n_s) b(X^{(n)}_s)\,\di  B_s \rangle \\
&+\int_{t_k}^t\int_{\|z\|\leq 1}
\Big( \|X_{s-}^n+  \Phi(\delta^n_s, Y^n_{s-} +  c(X^{(n)}_s   )z) -
\Phi(\delta^n_s, Y^n_{s-})   \|^r-\|X_{s-}^n\|^r\Big)\,
\widetilde{N}(\di z, \di s).
\ea

Recall that $\Phi_x(\cdot,\cdot )$, $\Phi_{xx}(\cdot,\cdot )$, $a(\cdot)$, $b(\cdot)$ and $c(\cdot)$
are bounded and $\|z\|\leq 1$, hence by the Taylor formula
\ba
\Big\| \Phi(\delta^n_s, Y^n_s +  c(X^{(n)}_s)z) -
\Phi(\delta^n_s, Y^n_s) -\Phi_x (\delta^n_s, Y^n_s) c(X^{(n)}_s)z \Big\|\leq_C\|z\|^2
\ea
and
\ba
\Big| \|X_{s}^n+  \Phi(\delta^n_s, Y^n_s +  c(X^{(n)}_s   )z) & - \Phi(\delta^n_s, Y^n_{s})   \|^r-\|X_{s}^n\|^r
-r\|X_{s}^n\|^{r-2} \langle X_{s}^n,  \Phi(\delta^n_s, Y^n_s +  c(X^{(n)}_s   )z - \Phi(\delta^n_s, Y^n_{s}) \rangle
\Big|\\
&\leq_C (1+ \|X_t^n\|^r)\|z\|^2.
\ea
Therefore
\ba
\|X_t^n\|^r=\|X_{t_k}^n\|^r  + \int_{t_k}^t D_s^{n,k}\, \di s+ M_t^{n,k}, \quad t\in [t_k, t_{k+1}),
\ea
with a local martingale $M^{n,k}$, $M^{n,k}_{t_{k}}=0$, and the drift $D^{n,k}$ that satisfies
\begin{equation}
D_t^{n,k}\leq_C  1+ \|X_t^n\|^r .
\end{equation}
The estimate \eqref{e:estXn} follows by the standard argument, involving localization, the Gronwall lemma, and the Fatou lemma.
\end{proof}

With these preparations complete, we now  proceed with the  proof of Theorem \ref{t:thm_q}. The first step is provided by the following
\begin{lem}
\label{lA1Delta}
Let $r\in[2,\infty)$. Then, for any $\gamma\in [0,1]$
\ba
\sup_{t\in[t_k,t_{k+1})}\E^{(N)}_{t_1,\dots,t_N}\Big[\|\Delta_t^n\|^r\Big|\mathcal{F}_{t_k}\Big]
\leq_C \|\Delta_{t_k}^n\|^r
&+h_n^{r\gamma +1}(1+\|X_{t_k}^{(n)}\|^{r\gamma\chi+r}) +h_n^{r\gamma }(1 + \|X^n_{t_k}\|^{r\gamma\chi+r})\\
&+h_n  (U_{t_k}^n)^r
+ h_n^{r\gamma+1} (U_{t_k}^n)^{r\gamma\chi}  + h_n
\ea
for $n$ large enough uniformly over
$N\in\bN_0$, partitions $\{t_1,\dots, t_N\}$, and $k=0, \dots, N$.
\end{lem}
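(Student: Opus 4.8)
The plan is to fix $k$, restrict attention to the inter-jump interval $[t_k,t_{k+1})$ --- on which, under $\P^{(N)}_{t_1,\dots,t_N}$, both $X$ and $X^n$ are driven only by $B$ and the small-jump martingale $\widehat Z$ --- and apply It\^o's formula to $t\mapsto\|\Delta^n_t\|^r$. On this interval $X$ solves \eqref{e:Itohat}, while $X^n$ is given by \eqref{e:Xn} with its large-jump integral removed; subtracting, $\Delta^n=X^n-X$ is an It\^o semimartingale with drift $\beta_s=\big(A(X^n_s)-A(X_s)\big)+\big(\Phi_x(\delta^n_s,Y^n_s)a(\Xn_s)-a(X_s)\big)+\tfrac12\operatorname{tr}\big(\Phi_{xx}(\delta^n_s,Y^n_s)bb^T(\Xn_s)\big)+\int_{\|z\|\le1}\big(\Psi^n_s(z)-\Phi_x(\delta^n_s,Y^n_s)c(\Xn_s)z\big)\,\nu(\di z)$, diffusion coefficient $\rho_s=\Phi_x(\delta^n_s,Y^n_s)b(\Xn_s)-b(X_s)$, and small jumps $\gamma_s(z)=\Psi^n_s(z)-c(X_{s-})z$. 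Since $r\ge 2$, $f(x)=\|x\|^r$ is $C^2$ with $\|\Hess f(x)\|\lc\|x\|^{r-2}$, so It\^o's formula gives $\|\Delta^n_t\|^r=\|\Delta^n_{t_k}\|^r+\int_{t_k}^t\mathcal D_s\,\di s+M_t-M_{t_k}$ with $M$ a local martingale and $\mathcal D_s=r\|\Delta^n_s\|^{r-2}\langle\Delta^n_s,\beta_s\rangle+\tfrac12\operatorname{tr}\big(\Hess f(\Delta^n_s)\rho_s\rho_s^T\big)+\int_{\|z\|\le1}\big(\|\Delta^n_s+\gamma_s(z)\|^r-\|\Delta^n_s\|^r-r\|\Delta^n_s\|^{r-2}\langle\Delta^n_s,\gamma_s(z)\rangle\big)\,\nu(\di z)$.

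The heart of the argument is to show that the only piece of $\mathcal D_s$ carrying $\|\Delta^n_s\|^r$ with a non-vanishing coefficient is the leading one: by $\mathbf H^{\mathrm{Lip}_+}_A$ with $x=X^n_s$, $y=X_s$ we get $\langle\Delta^n_s,A(X^n_s)-A(X_s)\rangle\le L\|\Delta^n_s\|^2$, so that term is $\le rL\|\Delta^n_s\|^r$. Everything else I would treat as a ``source'', using: boundedness of $a,b,c,\Phi_x,\Phi_{xx}$; the close-to-identity bounds \eqref{e:Phih}, \eqref{e:Phij} evaluated at $Y^n_s$ (since $\delta^n_s\le h_n$ and $\|Y^n_s\|\lc\|\Xn_s\|+U^n_s$, these produce a factor $h_n^{\gamma}$ at the price of $1+\|\Xn_s\|^{\gamma\chi}+(U^n_s)^{\gamma\chi}$); the Lipschitz bounds on $a,b,c$ combined with $\|\Xn_s-X_s\|\le\|\Xn_s-X^n_s\|+\|\Delta^n_s\|$ and \eqref{estimates0} (this is where the exponent $\gamma\chi+1$ enters); a second-order Taylor estimate in $z$ for $\Psi^n_s(z)-\Phi_x(\delta^n_s,Y^n_s)c(\Xn_s)z$, as in Lemma \ref{l:estXn}; the integrability $\int_{\|z\|\le1}(\|z\|^2\vee\|z\|^r)\,\nu(\di z)<\infty$, valid for $r\ge2$; and Young's inequality, to absorb the products $\|\Delta^n_s\|^{r-1}Q$, $\|\Delta^n_s\|^{r-2}Q$ into $\|\Delta^n_s\|^r+Q^r$, $\|\Delta^n_s\|^r+Q^{r/2}$. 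This bookkeeping should yield
\[
\mathcal D_s\ \lc\ \|\Delta^n_s\|^r+h_n^{r\gamma}\big(1+\|\Xn_s\|^{r\gamma\chi+r}\big)+h_n^{r\gamma}(U^n_s)^{r\gamma\chi}+(U^n_s)^r,
\]
the lower-order $\|\Xn_s\|^{r\gamma\chi}$-contributions (from the It\^o correction and the jump compensator) being absorbed into the displayed one.

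Finally I would take $\E^{(N)}_{t_1,\dots,t_N}[\,\cdot\mid\mathcal F_{t_k}]$, kill $M$ by localization and Fatou (as in Lemma \ref{l:estXn}), and integrate. The point is that $[t_k,t_{k+1})$ can be long (length up to $T$), but it splits into the short piece $[t_k,\eta^n_{t_k}+h_n)$, of length $\le h_n$, on which $\Xn_s\equiv\Xn_{t_k}$ is $\mathcal F_{t_k}$-measurable, and the remaining piece, on which $\eta^n_s>t_k$ and $\Xn_s=X^n_{\eta^n_s}$, so that Lemma \ref{l:estXn} (with exponent $r\gamma\chi+r$) gives $\E[\|\Xn_s\|^{r\gamma\chi+r}\mid\mathcal F_{t_k}]\lc1+\|X^n_{t_k}\|^{r\gamma\chi+r}$; similarly, splitting each increment of $B,\widehat Z$ at $t_k$ and using independence past $t_k$ with $\E\|B_s-B_u\|^q+\E\|\widehat Z_s-\widehat Z_u\|^q\lc s-u$ for $s-u\le1$, $q\ge2$, one gets $\E[(U^n_s)^q\mid\mathcal F_{t_k}]\lc(U^n_{t_k})^q+h_n$ on the short piece and $\lc h_n$ afterwards (for $q<2$ one loses only an extra positive power of $h_n$, absorbed below). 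Integrating the source bound over the short piece contributes the terms $h_n^{r\gamma+1}(1+\|\Xn_{t_k}\|^{r\gamma\chi+r})$, $h_n^{r\gamma+1}(U^n_{t_k})^{r\gamma\chi}$, $h_n(U^n_{t_k})^r$ and $h_n$, while integrating over the long piece contributes $\lc h_n^{r\gamma}(1+\|X^n_{t_k}\|^{r\gamma\chi+r})$ plus further $O(h_n)$ terms; Gronwall's lemma on $[t_k,t)$, with factor $\ex^{C(t-t_k)}\le\ex^{CT}\lc1$, then absorbs the $\|\Delta^n_s\|^r$-term and gives the assertion, uniformly in $N$, $\{t_1,\dots,t_N\}$, $k$, and $n$ large.

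The routine parts are the It\^o expansion and the closing Gronwall argument; the main obstacle is the source estimate for $\mathcal D_s$ --- every source term must be routed so that the sole non-small coefficient of $\|\Delta^n_s\|^r$ is the $\mathbf H^{\mathrm{Lip}_+}_A$-term, and one must land on exactly the exponents $r\gamma\chi+r$, $r\gamma\chi$ and the right powers of $h_n$. This is precisely what the free parameter $\gamma\in[0,1]$ buys: via \eqref{e:Phig}--\eqref{e:Phij} it interpolates between ``gain a power $h_n^\gamma$ at the price of $\|\cdot\|^{\gamma\chi}$-growth'' and ``no gain but bounded coefficients''. A secondary, bookkeeping-flavoured point is the split of the possibly long interval $[t_k,t_{k+1})$ at the first grid point after $t_k$, which is what separates the $\Xn_{t_k}$-term (carrying the extra factor $h_n$) from the $X^n_{t_k}$-term supplied by the a priori bound of Lemma \ref{l:estXn}.
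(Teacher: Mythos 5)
Your proposal follows essentially the same route as the paper's proof: Itô's formula for $\|\Delta^n_t\|^r$ on the jump-free interval, the one-sided Lipschitz condition for the leading term, the $\Phi$-estimates \eqref{e:Phih}--\eqref{e:Phij} together with Lipschitz/boundedness of $a,b,c$ and Young's inequality to reduce the drift to $\|\Delta^n_s\|^r$ plus the source $h_n^{r\gamma}(1+\|X^{(n)}_s\|^{r\gamma\chi+r})+(U^n_s)^r+h_n^{r\gamma}(U^n_s)^{r\gamma\chi}$, then the split of $[t_k,t_{k+1})$ at the first grid point after $t_k$, Lemma \ref{l:estXn} and the noise-moment bounds for the conditional expectations, and Gronwall. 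The bookkeeping you defer (the paper's explicit $\Lambda^{a/b/c,i,n}$ decomposition) works out exactly as you describe, so the argument is correct and matches the paper's.
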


\begin{proof}
We divide the proof of Lemma
\ref{lA1Delta} into five steps.

\noindent
i) Semimartingale representation of
$\Delta^n$. On
$[t_k, t_{k+1})$, there are no large jumps for $Z$, hence on this interval we have
\ba
\label{e:ItoX}
\di X_t= A(X_t)\,\di t + a( X_{t})\,\di t + b( X_{t})\,\di B_t
+ \int_{\|z\|\leq 1}
c( X_{t-})z\, \widetilde N(\di z,\di t).
\ea
Together with a semi-martingale representation \eqref{e:Xn} for $X^n$ this gives that, for $t\in [t_k, t_{k+1})$,
\ba
\label{e:ItoDelta}
\di\Delta_t^n
&=(A(X_t^n)-A(X_t))\,\di t\\
&+\Big(\Phi_x(\delta_t^n, Y_t^n)a(X_t^{(n)})-a(X_t)\Big)\,\di t\\
&+\frac12 \operatorname{tr}\Big(\Phi_{xx}(\delta_t^n, Y_t^n)bb^T(\Xn_t)\Big)\, \di t\\
&+\int_{|z|\leq 1}
\Big(\Phi(\delta_t^n, Y_{t}^n+c(\Xn_{t})z)-\Phi(\delta_t^n, Y_{t}^n)-\Phi_x(\delta_t^n, Y_{t}^n)c(\Xn_{t})z\Big)\,\nu(\di z)\,\di t
\\&+\Big(\Phi_x(\delta_t^n, Y_t^n)b(X_t^{(n)})-b(X_t)\Big)\,\di B_t,
\\&+\int_{|z|\leq 1}\Big(\Phi(\delta_t^n, Y_{t-}^n+c(\Xn_{t-})z)-\Phi(\delta_t^n, Y_{t-}^n)-c(X_{t-})z\Big)\widetilde N(\di z, \di t).
\ea
We decompose the terms \eqref{e:ItoDelta} further to facilitate effective approximation estimates.
We set
\ba
\Phi_x(\delta_t^n, Y_t^n)a(X_t^{(n)})-a(X_t)&=a(X^n_t)-a(X_t)\\
&+a(\XN_t)-a(X_t^n)\\
&+a(\Xn_t)-a(\XN_t)\\
&+\Big((\Phi_x(\delta_t^n, Y_t^n) - \Phi_x(\delta_t^n, \Xn_t)\Big)a(\Xn_t)\\
&+\Big(\Phi_x(\delta_t^n, \Xn_t)-\mathrm{Id}\Big)a(\Xn_t)\\
&=:\sum_{i=0}^4\Lambda_t^{a,i,n}
\ea
and similarly
\ba
\Phi_x(\delta_t^n, Y_t^n)b(X_t^{(n)})-b(X_t)=:\sum_{i=0}^4\Lambda_t^{b,i,n}.
\ea
We also decompose
\ba
\Phi(\delta_t^n, Y_t^n+c(\Xn_t)z)&-\Phi(\delta_t^n, Y_t^n)-c(X_t)z
\\&=c(X^n_{t})z-c(X_{t})z
\\&+c(\XN_{t})z-c(X_{t}^n)z
\\&+c(\Xn_{t})z-c(\XN_{t})z
\\&+\Big(\Phi(\delta_t^n, Y_{t}^n+c(\Xn_{t})z)-\Phi(\delta_t^n, Y_{t}^n)\Big)
-\Big(\Phi(\delta_t^n,\Xn_{t}+c(\Xn_{t})z)-\Phi(\delta_t^n, \Xn_{t})\Big)
\\&+\Phi(\delta_t^n,\Xn_{t}+c(\Xn_{t})z)-\Phi(\delta_t^n, \Xn_{t})-c(\Xn_{t})z
\\&=:\sum_{i=0}^4\Lambda_t^{c,i,n}(z),
\ea
and denote
\ba
\Lambda_t^{b,5,n}&=\frac{1}2\operatorname{tr}\Big(\Phi_{xx}(\delta_t^n, Y_t^n)bb^T(X_t^{(n)})\Big),\\
\Lambda_t^{c,5,n}(z)&=\Phi(\delta_t^n, Y_{t}^n+c(\Xn_{t})z)-\Phi(\delta_t^n, Y_{t}^n)-\Phi_x(\delta_t^n, Y_{t}^n)c(\Xn_{t})z.
\ea
With this notation, we have
\ba
\di\Delta_t^n=(A(X^n_t)-A(X_t))\,\di t
&+\sum_{i=0}^4\Lambda_t^{a,i,n}\,\di t+\Lambda_t^{b,5,n}\, \di t+\int_{\|z\|\leq 1}\Lambda_t^{c,5,n}(z)\,\nu(\di z)
\\
&+\sum_{i=0}^4\Lambda_t^{b,i,n}\,\di B_t
+\int_{\|z\|\leq 1} \sum_{i=0}^4\Lambda_{t-}^{c,i,n} (z)\widetilde{N}(\di z, \di t), \quad t\in [t_k, t_{k+1}).
\ea
ii) Semimartingale
representation of $\|\Delta^n\|^r$.
On the interval $t\in [t_k, t_{k+1})$,
we apply the It\^o formula to obtain
\ba
\label{Ito}
\di\|\Delta_t^n\|^r
&=r  \|\Delta_t^n\|^{r-2}\langle \Delta^n_t, A(X^n_t)-A(X_t) \rangle \,\di t\\
&+\sum_{i=0}^4  r \|\Delta_t^n\|^{r-2} \langle \Delta^n_t,   \Lambda_t^{a,i,n}\rangle  \,\di t\\
&+r \|\Delta_t^n\|^{r-2} \langle \Delta^n_t,   \Lambda_t^{b,5,n}\rangle  \, \di t\\
&+r \|\Delta_t^n\|^{r-2} \langle \Delta_t^n, \int_{\|z\|\leq 1}\Lambda_t^{c,5,n}(z)\,\nu(\di z) \rangle \, \di t \\
&+\frac{r(r-1)}{2} \|\Delta_t^n\|^{r-4}
\Big\| \Big(\sum_{i=0}^4\Lambda_t^{b,i,n}\Big)^T
\Delta_t^n\Big\|^2\, \di t\\
&+\frac{r}{2} \|\Delta_t^n\|^{r-2}
\Big\| \sum_{i=0}^4\Lambda_t^{b,i,n}\Big\|^2\, \di t\\
&+\int_{\|z\|\leq 1}
\Big(\Big\|\Delta^n_t +\sum_{i=0}^4\Lambda_t^{c,i,n} (z)\Big\|^r
-\|\Delta^n_t\|^r
-r\|\Delta^n_t\|^{r-2}\langle  \Delta^n_t ,  \sum_{i=0}^4\Lambda_t^{c,i,n} (z)  \rangle\Big) \,\nu(\di z)\, \di t\\
&+r \|\Delta_t^n\|^{r-2} \langle \Delta_t^n , \sum_{i=0}^4\Lambda_t^{b,i,n}\,\di  B_t \rangle \\
&+\int_{\|z\|\leq 1} \Big(\Big\|\Delta^n_{t-}+\sum_{i=0}^4\Lambda_t^{c,i,n} (z)\Big\|^r-\|\Delta^n_{t-}\|^r\Big)\,
\widetilde{N}(\di z, \di t).
\ea
Recall that $c(\cdot)$ is Lipschitz continuous and $\|z\|\leq 1$, so that
\ba
\|\Lambda^{c,0,n}_s\|\leq_C \|\Delta^n_s\|\|z\|.
\ea
Hence by the Taylor formula
\ba
 \Big\|\Delta^n_t &+\sum_{i=0}^4\Lambda_t^{c,i,n} (z)\Big\|^r
-\|\Delta^n_t\|^r
-r\|\Delta^n_t\|^{r-2}\langle  \Delta^n_t ,  \sum_{i=0}^4\Lambda_t^{c,i,n} (z)  \rangle\\
&\leq_C \Big( \|\Delta_t^n\|^{r-2}   + \Big\|\sum_{i=0}^4\Lambda_t^{c,i,n} (z)\Big\|^{r-2}  \Big)
\Big\|\sum_{i=0}^4\Lambda_t^{c,i,n} (z)\Big\|^2\\
&\leq_C  \Big( \|\Delta_t^n\|^{r-2} +\Big\|\sum_{i=1}^4\Lambda_t^{c,i,n} (z)\Big\|^{r-2}  \Big)
\Big( \|\Delta^n_s\|^2\|z\|^2      +\Big\|\sum_{i=1}^4\Lambda_t^{c,i,n} (z)\Big\|^2\Big)\\
&\leq_C
\|\Delta_t^n\|^r\|z\|^2
+ \|\Delta^n_s\|^{r-2}\sum_{i=1}^4\|\Lambda_t^{c,i,n} (z)\|^2
+ \|\Delta^n_s\|^2\|z\|^2 \sum_{i=1}^4\|\Lambda_t^{c,i,n} (z)\|^{r-2}
+\sum_{i=1}^4\|\Lambda_t^{c,i,n} (z)\|^r.
\ea
For $r=2$, the r.h.s.\ of the latter formula reduces to
\ba
\cdots\leq_C\|\Delta^n_s\|^2\|z\|^2 +\sum_{i=1}^4\|\Lambda_t^{c,i,n} (z) \|^2.
\ea
The Young inequality \eqref{e:young}  gives
\ba
\int_{\|z\|\leq 1}
\Big(\Big\| &\Delta^n_t +\sum_{i=0}^4\Lambda_t^{c,i,n} (z)\Big\|^r
-\|\Delta^n_t\|^r
-r\|\Delta^n_t\|^{r-2}\langle  \Delta^n_t ,  \sum_{i=0}^4\Lambda_t^{c,i,n} (z)  \rangle\Big) \,\nu(\di z)\\
&\leq_C
\|\Delta_t^n\|^r
+ \|\Delta^n_s\|^{r-2} \int_{\|z\|\leq 1}\sum_{i=1}^4\|\Lambda_t^{c,i,n} (z)\|^2\,\nu(\di z)\\
&+ \|\Delta^n_s\|^2  \int_{\|z\|\leq 1}  \|z\|^2 \sum_{i=1}^4\|\Lambda_t^{c,i,n} (z)\|^{r-2}\,\nu(\di z)
+ \int_{\|z\|\leq 1} \sum_{i=1}^4\|\Lambda_t^{c,i,n} (z)\|^r\,\nu(\di z)\\
&\leq_C
\|\Delta_t^n\|^r
+ \Big( \int_{\|z\|\leq 1}\sum_{i=1}^4\|\Lambda_t^{c,i,n} (z)\|^2\,\nu(\di z)\Big)^\frac{r}{2}\\
&+  \Big( \int_{\|z\|\leq 1}  \|z\|^2 \sum_{i=1}^4\|\Lambda_t^{c,i,n} (z)\|^{r-2}\,\nu(\di z)
\Big)^{\frac{r}{r-2}}
+ \int_{\|z\|\leq 1}\sum_{i=1}^4 \|\Lambda_t^{c,i,n} (z)\|^r\,\nu(\di z),
\ea
whereas for $r=2$ we just get
\ba
\cdots \leq_C \|\Delta_t^n\|^2
+ \int_{\|z\|\leq 1}\sum_{i=1}^4 \|\Lambda_t^{c,i,n} (z)\|^2\,\nu(\di z).
\ea
This gives the representation
\begin{equation}
\label{Ito2}
\|\Delta_t^n\|^r=\|\Delta_{t_k}^n\|^r + \int_{t_k}^t \rmD_s^{n,k}\, \di s+ \rmM_t^{n,k}, \quad t\in [t_k, t_{k+1}),
\end{equation}
with a local martingale $\rmM^{n,k}$, $\rmM^{n,k}_{t_{k}}=0$, and a drift term $\rmD^{n,k}$ that satisfies
\ba
\label{estG}
\rmD_t^{n,k} & \leq_C  \|\Delta_t^n\|^r + \rmF_t^{n,k},\\
\rmF_t^{n,k}
&:=\sum_{i=1}^4 \|\Lambda_t^{a,i,n}\|^r
+\sum_{i=1}^5 \|\Lambda_t^{b,i,n}\|^r+\sum_{i=1}^5 \int_{\|z\|\leq 1}\|\Lambda_t^{c,i,n}(z)\|^r\,\nu(\di z)\\
&+\sum_{i=1}^4 \Big( \int_{\|z\|\leq 1} \|\Lambda_t^{c,i,n} (z) \|^2\,\nu(\di z)\Big)^\frac{r}{2}
+ \sum_{i=1}^4 \Big( \int_{\|z\|\leq 1}  \|z\|^2\|\Lambda_t^{c,i,n} (z)\|^{r-2}\,\nu(\di z) \Big)^{\frac{r}{r-2}},
\ea
where the last line vanishes for $r=2$.

Then by the standard argument, involving localization, the Gronwall lemma, and the Fatou lemma, we get that,
for $t\in [t_k, t_{k+1})$,
\ba
\label{estDelta}
\E^{(N)}_{t_1,\dots,t_N}\Big[\|\Delta_t^n\|^r\Big|\mathcal{F}_{t_k}\Big]
\leq_C \|\Delta_{t_k}^n\|^r+ \int_{t_k}^{t }\E^{(N)}_{t_1,\dots,t_N}\Big[ \rmF_s^{n,k} \Big|\mathcal{F}_{t_k}\Big]\,\di s.
\ea
iii)
Estimates of the $\Lambda$-terms in \eqref{estG}.

\noindent
1. Since $a,b,c$ are Lipschitz continuous, we have by \eqref{estimates} that
\ba
\|\Lambda_s^{a,1,n}\|+\|\Lambda_s^{b,1,n}\|  & \leq_C  U_s^n, \\
\|\Lambda_s^{c,1,n}(z)\| & \leq_C  U_s^n\|z\|.
\ea
2. By \eqref{estimates1} we get
\ba
\|\Lambda_s^{a,2,n}\|+\|\Lambda_s^{b,2,n}\| &\leq_C  h_n^{\gamma }(1+\|X^{(n)}_s\|^{\gamma\chi+1}), \\
\|\Lambda_s^{c,2,n}(z)\| & \lc h_n^{\gamma }(1+\|X^{(n)}_s\|^{\gamma\chi+1})\|z\|.
\ea
3. By \eqref{estimates2} we have
\ba
\|\Lambda_s^{a,3,n}\|+\|\Lambda_s^{b,3,n}\|&\leq_C  U_s^n.
\ea
Since $c(\cdot)$ is bounded, $\|z\|\leq 1$, and $\|\Phi_{xx}\|$ is bounded by \eqref{e:Phixx} we get
\ba
\Lambda_s^{c,3,n}(z)&=\Big(\Phi(\delta_s^n, Y_{s}^n+c(\Xn_{s})z)-\Phi(\delta_s^n, Y_{s}^n)\Big)
-\Big(\Phi(\delta_s^n,\Xn_{s}+c(\Xn_{s})z)-\Phi(\delta_s^n, \Xn_{s})\Big)\\
&=\int_0^1
\Big( \Phi_x(\delta_s^n, Y_{s}^n+\theta c(\Xn_{s})z)
-\Phi_x(\delta_s^n, \Xn_{s} +\theta c(\Xn_{s})z) \Big)c(\Xn_{s}) z\,\di \theta\\
&=
\int_0^1\int_0^1
\partial_x \Big(\Phi_{x}(\delta_s^n, \zeta Y_{s}^n+(1-\zeta)\Xn_{s}+\theta c(\Xn_{s})z)     c(\Xn_{s})z\Big) (Y_{s}^n-\Xn_{s})\,\di \zeta\, \di \theta,
\ea
which implies that
\ba
\|\Lambda_s^{c,3,n}(z)\|\leq_C  \|Y_{s}^n-\Xn_{s}\|\|z\| .
\ea
Hence,
\ba
\|\Lambda_s^{c,3,n}(z)\|\leq_C U_{s}^n \|z\| .
\ea
4. By \eqref{e:Phih}, we have

\ba
\|\Lambda_s^{a,4,n}\|+\|\Lambda_s^{b,4,n}\|\leq_C h_n^{\gamma }(1+\|X^{(n)}_s\|^{\gamma\chi+1}).
\ea
Furthermore,
\ba
\Lambda_s^{c,4,n}(z)
&=\Big(\Phi(\delta_s^n,\Xn_{s}+c(\Xn_{s})z)-\Xn_{s}-c(\Xn_{s})z\Big)-\Big(\Phi(\delta_s^n, \Xn_{s})-\Xn_{s}\Big),
\ea
hence by \eqref{e:Phih} we have
\ba
\|\Lambda_s^{c,4,n}(z)\|\lc h_n^{\gamma }(1+\|X^{(n)}_{s}\|^{\gamma\chi+1})\|z\|.
\ea
Here we have used that
\ba
(1+\|\Xn_{s}+c(\Xn_{s})z\|^{\gamma\chi+1})\lc (1+\|X^{(n)}_{s}\|^{\gamma\chi+1}),
\ea
because $c(\cdot)$ and $\|z\|$ are bounded.

\noindent
5. Finally, by \eqref{e:Phij} we have
\ba
\|\Lambda_s^{b,5,n}\|\lc h_n^{\gamma}(1+\|X^{(n)}_s\|^{\gamma\chi})+h_n^{\gamma}(U_{s}^n)^{\gamma\chi}
\ea
and
\ba
\|\Lambda_s^{c,5,n}(z)\|
&\leq \frac{1}{2}\sup_{\theta\in[0,1]}\|\Phi_{xx}(\delta_s^n, Y_{s}^n+\theta c(\Xn_{s})z)\|\|c(\Xn_{s})z\|^2 \\
&\lc h_n^{\gamma}(1+\|X^{(n)}_s\|^{\gamma\chi})\|z\|^{2}+h_n^{\gamma}(U_{s}^n)^{\gamma\chi}\|z\|^{2}.
\ea
Summarizing the above estimates and taking into account that $\|z\|^{2}\leq \|z\|$ for $\|z\|\leq 1$, we get
\ba
\label{Lambda1}
\Lambda_s^{a,n}&:= \sum_{i=1}^4 \|\Lambda_s^{a,i,n}\|
\leq_C h_n^{\gamma }(1+\|X^{(n)}_{s}\|^{\gamma\chi+1})+U_{s}^n+h_n^{\gamma}(U_s^n)^{\gamma\chi},\\
\Lambda_s^{b,n}&:= \sum_{i=1}^5 \|\Lambda_s^{b,i,n}\|
\leq_C  h_n^{\gamma }(1+\|X^{(n)}_{s}\|^{\gamma\chi+1})+U_{s}^n+h_n^{\gamma}(U_s^n)^{\gamma\chi},\\
\Lambda_s^{c,n}(z)&:= \sum_{i=1}^5\|\Lambda_s^{c,i,n}(z)\|
\leq_C \Big(h_n^{\gamma }(1+\|X^{(n)}_{s}\|^{\gamma\chi+1})+U_{s}^n+h_n^{\gamma}(U_s^n)^{\gamma\chi}\Big)\|z\|.
\ea
Alternatively, using that coefficients $a,b,c$ and the derivarives $\Phi_x$ and $\Phi_{xx}$ are bounded
(see \eqref{e:Phix01} and \eqref{e:Phixx}) we immediately get that
\ba
\label{Lambda2}
\Lambda_s^{a,n}&\leq_C 1,\quad \Lambda_s^{b,n}\leq_C 1,\quad \Lambda_s^{c,n}(z)\leq_C \|z\|.
\ea
iv) Estimates of the terms in
\eqref{Lambda1}.
Denote $t_k^{n,*}=\eta^n_{t_k}+h_n$
and observe that for $t\in [t_k^{n,*}, t_{k+1})$,
\ba
\mathrm{i)}
&\quad\widehat Z^n_t:=\widehat{Z}_t-\widehat{Z}_{\eta^n_{t}} \text{ and } B_t^n\text{ are independent of }\mathcal{F}_{t_k},\\
\mathrm{ii)}
&\quad   \Xn_t=X^n_{\eta^n_t}\text{ with }\eta^n_t\in [t_k^{n,*}, t_{k+1}).
\ea
On the other hand, for $t\in [t_k,t_k^{n,*})$
\ba
\mathrm{i)}
&\quad \widehat Z^n_t:=\widehat{Z}_t- \widehat{Z}_{t_k} + \widehat{Z}_{t_k} -\widehat{Z}_{\eta^n_{t_k}} \text{ and }
B^n_t:=B_t- B_{t_k} + B_{t_k} -B_{\eta^n_{t_k}},\\
&\quad \widehat{Z}_t- \widehat{Z}_{t_k},\ B_t- B_{t_k} \text{ are independent of }\mathcal{F}_{t_k},\\
&\quad \widehat{Z}_{t_k} -\widehat{Z}_{\eta^n_{t_k}},\ B_{t_k} -B_{\eta^n_{t_k}}
\text{ are measurable w.r.t.\ }\mathcal{F}_{t_k},\\
\mathrm{ii)}
&\quad   \Xn_t= \Xn_{t_k}=X^n_{\eta^n_{t_k}}.
\ea
For any $q>0$ and any $s,t\in[0,h_n]$ we have
\ba
\label{e:momentsNoise}
\E^{(N)}_{t_1,\dots,t_N}\|B_t-B_s\|^q\lc h_n^{\frac{q}{2}},
\quad \E^{(N)}_{t_1,\dots,t_N}\|\widehat Z^n_t - \widehat Z^n_t\|^q\lc h_n^{\frac{q}{2}}+ h_n.
\ea
Consequently, for any $q\in(0,\infty)$ we estimate:
\ba
\E^{(N)}_{t_1,\dots,t_N}[(U_t^n)^q\Big| \cF_{t_k}]&\leq_C \begin{cases}
                                    (U_{t_k}^n)^q + h_n^{\frac{q}{2}}+ h_n,\quad &t\in [t_k,t_k^{n,*}),\\
                                    h_n^{\frac{q}{2}}+ h_n,\quad &t\in [t_k^{n,*},t_{k+1}).
                                   \end{cases},\\
\E^{(N)}_{t_1,\dots,t_N}[\|X_t^{(n)}\|^q\Big| \cF_{t_k}]&\leq_C \begin{cases}
                                    \|X_{t_k}^{(n)}\|^q,\quad &t\in [t_k,t_k^{n,*}),\\
                                     \sup_{t\in[t_k,t_{k+1})} \E^{(N)}_{t_1,\dots,t_N} \|X_t^{n}\|^q
                                     \leq_C 1+ \|X^n_{t_k}\|^q
                                     ,\quad &t\in [t_k^{n,*},t_{k+1}).
                                   \end{cases},\\
\ea
where in the latter inequality we used
Lemma \ref{l:estXn}.

Furthermore, we estimate the integrals
\ba
\int_{t_k}^{t_{k+1}} \E^{(N)}_{t_1,\dots,t_N}\Big[(U_t^n)^q\Big| \cF_{t_k}\Big]  \,\di s
&=\Big(\int_{t_k}^{t_k^{n,*}} +
\int_{t_k^{n,*}}^{t_{k+1}}\Big) \E^{(N)}_{t_1,\dots,t_N}\Big[(U_t^n)^q\Big| \cF_{t_k}\Big]  \,\di s\\
&\leq_C  h_n \Big((U_{t_k}^n)^q + h_n^{\frac{q}{2}}+ h_n\Big) + h_n^{\frac{q}{2}}+ h_n\\
&\leq_C  h_n  (U_{t_k}^n)^q + h_n^{\frac{q}{2}}+ h_n.
\ea
Analogously,
\ba
\int_{t_k}^{t_{k+1}} \E^{(N)}_{t_1,\dots,t_N}\Big[\|X_t^{(n)}\|^q\Big| \cF_{t_k}\Big]  \,\di s
&=\Big(\int_{t_k}^{t_k^{n,*}} +
\int_{t_k^{n,*}}^{t_{k+1}}\Big) \E^{(N)}_{t_1,\dots,t_N}\Big[\|X_t^{(n)}\|^q\Big| \cF_{t_k}\Big]  \,\di s\\
&\leq_C  h_n \|X_{t_k}^{(n)}\|^q  +  1+ \|X^n_{t_k}\|^q.
\ea
v) Final estimates.
Now we can complete the proof. We rewrite \eqref{estDelta} as
\ba
\label{e:I*}
\E^{(N)}_{t_1,\dots,t_N}\Big[\|\Delta_t^n\|^r\Big|\mathcal{F}_{t_k}\Big]
\leq_C \|\Delta_{t_k}^n\|^r
&+ \int_{t_k}^{t_{k+1}}
\E^{(N)}_{t_1,\dots,t_N}\Big[
\Big[(\Lambda_s^{a,n})^r\Big|\mathcal{F}_{t_k}\Big]\,\di s\\
&+ \int_{t_k}^{t_{k+1} } \E^{(N)}_{t_1,\dots,t_N}\Big[(\Lambda_s^{b,n})^r\Big|\mathcal{F}_{t_k}\Big]\,\di s\\
&+\int_{t_k}^{t_{k+1} } \E^{(N)}_{t_1,\dots,t_N}\Big[ \int_{\|z\|\leq 1}(\Lambda_s^{c,n}(z))^r\,\nu(\di z)\Big|\mathcal{F}_{t_k}\Big]\,\di s\\
&+\sum_{i=1}^4 \E^{(N)}_{t_1,\dots,t_N}\Big[\Big( \int_{\|z\|\leq 1} \|\Lambda_t^{c,i,n} (z) \|^2\,\nu(\di z)\Big)^\frac{r}{2}\Big|\mathcal{F}_{t_k}\Big]\,\di s\\
&+ \sum_{i=1}^4 \E^{(N)}_{t_1,\dots,t_N}\Big[\Big( \int_{\|z\|\leq 1}  \|z\|^2\|\Lambda_t^{c,i,n} (z)\|^{r-2}\,\nu(\di z) \Big)^{\frac{r}{r-2}}\Big|\mathcal{F}_{t_k}\Big]\,\di s\\
&\hspace{-3cm}\leq_C
 \|\Delta_{t_k}^n\|^r + \int_{t_k}^{t_{k+1} }
\E^{(N)}_{t_1,\dots,t_N}\Big[ h_n^{r\gamma }(1+\|X^{(n)}_{s}\|^{r\gamma\chi+r})+(U_{s}^n)^r+
h_n^{r\gamma}(U_s^n)^{r\gamma\chi}  \Big|\mathcal{F}_{t_k}\Big]\,\di s.
\ea
Eventually, this results in the desired estimate:
\ba\label{e:*I}
\E^{(N)}_{t_1,\dots,t_N}\Big[\|\Delta_t^n\|^r\Big|\mathcal{F}_{t_k}\Big]
\leq_C \|\Delta_{t_k}^n\|^r
&+
h_n^{r\gamma }(1+h_n \|X_{t_k}^{(n)}\|^{r\gamma\chi+r}  +  1+ \|X^n_{t_k}\|^{r\gamma\chi+r})\\
&+h_n  (U_{t_k}^n)^r + h_n^{\frac{r}{2}}+ h_n
+ h_n^{r\gamma}\Big(h_n  (U_{t_k}^n)^{r\gamma\chi} + h_n^{\frac{r\gamma\chi}{2}}+ h_n \Big)\\
&\leq_C \|\Delta_{t_k}^n\|^r
+h_n^{r\gamma +1}(1+\|X_{t_k}^{(n)}\|^{r\gamma\chi+r}) +h_n^{r\gamma }(1 + \|X^n_{t_k}\|^{r\gamma\chi+r})\\
&+h_n  (U_{t_k}^n)^r
+ h_n^{r\gamma+1} (U_{t_k}^n)^{r\gamma\chi}  + h_n.
\ea
\end{proof}

As a corollary, we get the following estimate for the moments of $\Delta^n$ of the order $q<p$, where
$p\in(0,\infty)$ denotes the exponent from the condition $\mathbf{H}_{\nu,p}$.

\begin{cor}
\label{cA3} For any $q\in (0,p)$
\ba
\sup_{t\in[t_k, t_{k+1})} \E^{(N)}_{t_1,\dots,t_N}\Big[\|\Delta_t^n\|^q\Big|\mathcal{F}_{t_k}\Big]
\leq_C \|\Delta_{t_k}^n\|^q
& + h_n^{(\frac{p-q}{\chi}\wedge q) + (1\wedge\frac{q}{2})}(1+\|X_{t_k}^{(n)}\|^p)
+ h_n^{\frac{p-q}{\chi}\wedge q}(1+\|X_{t_k}^n\|^{p})\\
&+ h_n^{1\wedge\frac{q}{2}}  (U_{t_k}^n)^q
+h_n^{(\frac{p-q}{\chi }\wedge q)   +(1\wedge\frac{q}{2})} (U_{t_k}^n)^{(p-q)\wedge (q\chi)}
+h_n^{1\wedge\frac{q}{2}}
\ea
for $n$ large enough uniformly over
$N\in\bN_0$, partitions $\{t_1,\dots, t_N\}$,  and $k=0, \dots, N$.
\end{cor}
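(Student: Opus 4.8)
The plan is to deduce Corollary~\ref{cA3} from Lemma~\ref{lA1Delta} by choosing the free parameter $\gamma\in[0,1]$ optimally and distinguishing the cases $q\geq 2$ and $q<2$. Throughout I would fix
\begin{equation*}
\gamma:=\Big(\frac{p-q}{q\chi}\Big)\wedge 1\in(0,1],
\end{equation*}
with the convention $\frac{p-q}{q\chi}=+\infty$, hence $\gamma=1$, when $\chi=0$. Since $q<p$ this is well defined, and a direct computation gives $q\gamma=\frac{p-q}{\chi}\wedge q$ and $q\gamma\chi=(p-q)\wedge(q\chi)$, together with the two capping inequalities $q\gamma\chi+q\leq p$ and $(\gamma\chi+1)q\leq p$ (with equality precisely when the first term of the minimum defining $\gamma$ is active). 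These identities are exactly what is needed to match the exponents on the right-hand side of the asserted bound.

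\emph{Case $q\in[2,p)$.} Here Lemma~\ref{lA1Delta} applies verbatim with $r=q$. In the resulting estimate every occurrence of $\|X_{t_k}^{(n)}\|$ and $\|X_{t_k}^n\|$ is raised to the power $q\gamma\chi+q\leq p$, which is dominated by $1+\|\cdot\|^p$ via the elementary inequality $\|x\|^s\leq 1+\|x\|^p$, $0\leq s\leq p$. Substituting $q\gamma=\frac{p-q}{\chi}\wedge q$, $q\gamma\chi=(p-q)\wedge(q\chi)$, and using $1\wedge\frac{q}{2}=1$ for $q\geq 2$ (so that $h_n=h_n^{1\wedge\frac{q}{2}}$ and $q\gamma+1=(\frac{p-q}{\chi}\wedge q)+(1\wedge\frac{q}{2})$) turns the bound of Lemma~\ref{lA1Delta} into exactly the one claimed.

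\emph{Case $q\in(0,2)$.} Now $q/2<1$. I would apply Lemma~\ref{lA1Delta} with $r=2$ and the same $\gamma$ to control $\E^{(N)}_{t_1,\dots,t_N}[\|\Delta_t^n\|^2\,|\,\mathcal F_{t_k}]$ and then raise both sides to the power $q/2$: conditional Jensen's inequality gives $\E^{(N)}_{t_1,\dots,t_N}[\|\Delta_t^n\|^q\,|\,\mathcal F_{t_k}]\leq\big(\E^{(N)}_{t_1,\dots,t_N}[\|\Delta_t^n\|^2\,|\,\mathcal F_{t_k}]\big)^{q/2}$, subadditivity of $x\mapsto x^{q/2}$ on $[0,\infty)$ distributes the power $q/2$ over the finite sum of nonnegative terms on the right, and $(1+a)^{q/2}\leq 1+a^{q/2}$ separates constants from moments. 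This produces moment exponents $(2\gamma\chi+2)\cdot\frac{q}{2}=(\gamma\chi+1)q\leq p$, the $h_n$-powers $\gamma q=\frac{p-q}{\chi}\wedge q$, $(2\gamma+1)\cdot\frac{q}{2}=\gamma q+\frac{q}{2}=(\frac{p-q}{\chi}\wedge q)+(1\wedge\frac{q}{2})$ and $\frac{q}{2}=1\wedge\frac{q}{2}$, and the $U_{t_k}^n$-exponent $\gamma\chi q=(p-q)\wedge(q\chi)$; bounding $\|X\|^s\leq 1+\|X\|^p$ as before gives the claim.

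The argument is essentially bookkeeping once $\gamma$ is fixed. The only point that really needs care is to verify that this single value of $\gamma$ simultaneously caps all moment exponents at $p$ and reproduces the three prescribed $h_n$-powers in both regimes, and that in the case $q<2$ the passage to the $q/2$-th power after invoking Lemma~\ref{lA1Delta} with $r=2$ loses no power of $h_n$ relative to the claimed rates. The degenerate case $\chi=0$ --- where $\gamma=1$, all $\|x\|^\chi$-type factors reduce to constants, $\frac{p-q}{\chi}\wedge q=q$ and $(p-q)\wedge(q\chi)=0$ --- is covered by the same formulas under the stated convention.
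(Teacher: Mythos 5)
Your proposal is correct and follows essentially the same route as the paper: split into the cases $q\in[2,p)$ (apply Lemma \ref{lA1Delta} with $r=q$) and $q\in(0,2)$ (apply it with $r=2$ and pass to the power $q/2$ via the conditional Jensen/H\"older step), then choose $\gamma=\frac{p-q}{q\chi}\wedge 1$ so that $q\gamma=\frac{p-q}{\chi}\wedge q$, $q\gamma\chi=(p-q)\wedge(q\chi)$ and $q\gamma\chi+q\leq p$, which is exactly the paper's argument. The exponent bookkeeping you carry out matches the paper's, so no gap remains.
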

\begin{proof}
If $q\in [2,\infty)$, we use Lemma \ref{lA1Delta} with $r=q$ to get
\ba
\sup_{t\in[t_k, t_{k+1})}\E^{(N)}_{t_1,\dots,t_N}\Big[\|\Delta_t^n\|^q\Big|\mathcal{F}_{t_k}\Big]
&\leq_C  \|\Delta_{t_k}^n\|^q
+h_n^{q\gamma +1}(1+\|X_{t_k}^{(n)}\|^{q\gamma\chi+q}) +h_n^{q\gamma}(1+\|X_{t_k}^n\|^{q\gamma\chi+q})\\
&+h_n  (U_{t_k}^n)^q
+ h_n^{q\gamma+1} (U_{t_k}^n)^{q\gamma\chi}  + h_n.
\ea
If $q\in (0,2)$ we  use Lemma \ref{lA1Delta} with $r=2$ and the H\"older inequality to get
\ba
\sup_{t\in[t_k, t_{k+1})}\E^{(N)}_{t_1,\dots,t_N}\Big[\|\Delta_t^n\|^q\Big|\mathcal{F}_{t_k}\Big]
&\leq
\sup_{t\in[t_k, t_{k+1})}\Big(\E^{(N)}_{t_1,\dots,t_N}\Big[\|\Delta_t^n\|^2\Big|\mathcal{F}_{t_k}\Big]\Big)^\frac{q}{2}\\
&\leq_C
\|\Delta_{t_k}^n\|^q
+h_n^{q\gamma +\frac{q}{2}}(1+\|X_{t_k}^{(n)}\|^{q\gamma\chi+q}) +h_n^{q\gamma }(1 + \|X^n_{t_k}\|^{q\gamma\chi+q})\\
&+h_n^{\frac{q}{2}}  (U_{t_k}^n)^q
+ h_n^{q\gamma+\frac{q}{2}} (U_{t_k}^n)^{q\gamma\chi}  + h_n^{\frac{q}{2}}.
\ea
Thus, in any case,
\ba\label{e:any case}
\sup_{t\in[t_k, t_{k+1})}\E^{(N)}_{t_1,\dots,t_N}\Big[\|\Delta_t^n\|^q\Big|\mathcal{F}_{t_k}\Big]
&\leq_C  \|\Delta_{t_k}^n\|^q
+h_n^{q\gamma +(1\wedge\frac{q}{2})}(1+\|X_{t_k}^{(n)}\|^{q\gamma\chi+q}) +h_n^{q\gamma}(1+\|X_{t_k}^n\|^{q\gamma\chi+q})\\
&+h_n  (U_{t_k}^n)^q
+ h_n^{q\gamma+(1\wedge\frac{q}{2})} (U_{t_k}^n)^{q\gamma\chi}  + h_n.
\ea
Taking
\ba
\gamma=\frac{p-q}{\chi q}\wedge 1,
\ea
we get
\ba
q\gamma\chi+q\leq p, \quad q\gamma=\frac{p-q}{\chi}\wedge q.
\ea
Then
\ba
1+\|X_{t_k}^n\|^{q\gamma\chi+q}\leq_C 1+\|X_{t_k}^n\|^p,
\ea
and \eqref{e:any case} yields the required estimate.
\end{proof}

The next step incorporates into the estimate the large jump at time $t=t_k$.
\begin{lem}
\label{lA4cor} For any $q\in(0,p)$
\ba
\sup_{t\in[t_k, t_{k+1})}\E^{(N)}_{t_1,\dots,t_N}\Big[\|\Delta_t^n\|^q\Big|\mathcal{F}_{t_k-}\Big]
&\leq_C \|\Delta_{t_k-}^n\|^q\\
&+h_n^{\frac{p-q}{\chi}\wedge q}(1+\|X_{t_k-}^{(n)}\|^p)
+ h_n^{\frac{p-q}{\chi}\wedge q}(1+\|X_{t_k-}^n\|^{p})\\
&+ (U^n_{t_k-})^q
+h_n^{1\wedge \frac{q}2}(U_{t_k-}^n)^{(p-q)\wedge (q\chi)}+h_n^{1\wedge \frac{q}2}
\ea
for $n$ large enough uniformly over
$N\in\bN$, partitions $\{t_1,\dots, t_N\}$,  and $k=1, \dots, N$.
\end{lem}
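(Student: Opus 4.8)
The plan is to reduce the claim to Corollary~\ref{cA3} by conditioning, and then to absorb the single large jump $J_k$ occurring at time $t_k$ under $\P^{(N)}_{t_1,\dots,t_N}$. Recall that under this measure $J_k$ is independent of $\mathcal{F}_{t_k-}$ with law $\lambda^{-1}\nu(\,\cdot\,\cap\{\|z\|>1\})$, so $\E^{(N)}_{t_1,\dots,t_N}\|J_k\|^{\beta}\leq_C1$ for every $\beta\in(0,p]$ by $\mathbf{H}_{\nu,p}$. First I would apply the tower property: for $t\in[t_k,t_{k+1})$,
\[
\E^{(N)}_{t_1,\dots,t_N}\big[\|\Delta^n_t\|^q\,\big|\,\mathcal{F}_{t_k-}\big]
\le\E^{(N)}_{t_1,\dots,t_N}\Big[\,\sup_{s\in[t_k,t_{k+1})}\E^{(N)}_{t_1,\dots,t_N}\big[\|\Delta^n_s\|^q\,\big|\,\mathcal{F}_{t_k}\big]\,\Big|\,\mathcal{F}_{t_k-}\Big],
\]
and bound the inner supremum by Corollary~\ref{cA3} with its interpolation parameter $\gamma=\frac{p-q}{q\chi}\wedge1$. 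This leaves the task of estimating the $\mathcal{F}_{t_k-}$-conditional expectation of the post-jump quantities $\|\Delta^n_{t_k}\|^q$, $1+\|X^{(n)}_{t_k}\|^p$, $1+\|X^n_{t_k}\|^p$, $(U^n_{t_k})^q$ and $(U^n_{t_k})^{(p-q)\wedge(q\chi)}$ that occur on the right-hand side of Corollary~\ref{cA3}.

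The terms not involving $\Delta^n$ are handled directly. Since the jump of $Y^n$ at $t_k$ feeds into $X^{(n)}$ only at the next grid point, $X^{(n)}_{t_k}=X^{(n)}_{t_k-}$ is already $\mathcal{F}_{t_k-}$-measurable. Boundedness of $\Phi_x,a,b,c$ gives $\|X^n_{t_k}-X^n_{t_k-}\|\leq_C\|J_k\|$ and $U^n_{t_k}\le U^n_{t_k-}+\|J_k\|$, hence $\|X^n_{t_k}\|^p\leq_C\|X^n_{t_k-}\|^p+\|J_k\|^p$, $(U^n_{t_k})^q\leq_C(U^n_{t_k-})^q+\|J_k\|^q$, and similarly for the $(p-q)\wedge(q\chi)$-power. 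Taking $\E^{(N)}_{t_1,\dots,t_N}[\,\cdot\,|\mathcal{F}_{t_k-}]$, the $\|J_k\|$-powers --- all $\le p$ --- become constants, while the surplus powers of $h_n$ are absorbed using $h_n\le1$ (valid for $n$ large enough); this already reproduces every term of the asserted bound except $\|\Delta^n_{t_k-}\|^q$ and the bare $(U^n_{t_k-})^q$.

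The core of the proof is the estimate for $\E^{(N)}_{t_1,\dots,t_N}[\|\Delta^n_{t_k}\|^q\,|\,\mathcal{F}_{t_k-}]$. Writing the jump of $\Delta^n$ at $t_k$, one has $\Delta^n_{t_k}-\Delta^n_{t_k-}=\Phi(\delta^n_{t_k},Y^n_{t_k-}+c(X^{(n)}_{t_k})J_k)-\Phi(\delta^n_{t_k},Y^n_{t_k-})-c(X_{t_k-})J_k=\sum_{i=0}^{4}\Lambda^{c,i,n}_{t_k}(J_k)$, which is precisely the telescoping decomposition from the proof of Lemma~\ref{lA1Delta}, now evaluated at the large jump $z=J_k$. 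Rerunning the arguments there --- Lipschitz continuity of $c$, the estimates \eqref{estimates} and \eqref{estimates1}, the bound \eqref{e:Phih} for $\Phi_x-\mathrm{Id}$, and boundedness of $\Phi_{xx}$ --- yields
\[
\Big\|\sum_{i=0}^{4}\Lambda^{c,i,n}_{t_k}(J_k)\Big\|
\leq_C\Big(\|\Delta^n_{t_k-}\|+U^n_{t_k-}+h_n^{\gamma}\big(1+\|X^{(n)}_{t_k-}\|^{\gamma\chi+1}\big)\Big)\|J_k\|+h_n^{\gamma}\|J_k\|^{\gamma\chi+1},
\]
the only new feature relative to the case $\|z\|\le1$ being the term $h_n^{\gamma}\|J_k\|^{\gamma\chi+1}$ coming from the $\Phi(\delta^n_{t_k},\cdot)-\mathrm{Id}$ contributions $\Lambda^{c,2}$ and $\Lambda^{c,4}$, where for $\|z\|>1$ one can no longer replace $\|z\|^{\gamma\chi}$ by a constant.

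Raising this to the power $q$ and taking $\E^{(N)}_{t_1,\dots,t_N}[\,\cdot\,|\mathcal{F}_{t_k-}]$, independence of $J_k$ from $\mathcal{F}_{t_k-}$ makes its moments factor out, and I expect to obtain
\[
\E^{(N)}_{t_1,\dots,t_N}\big[\|\Delta^n_{t_k}\|^q\,\big|\,\mathcal{F}_{t_k-}\big]
\leq_C\|\Delta^n_{t_k-}\|^q+(U^n_{t_k-})^q+h_n^{q\gamma}\big(1+\|X^{(n)}_{t_k-}\|^{q(\gamma\chi+1)}\big)+h_n^{q\gamma},
\]
valid as soon as $\E^{(N)}_{t_1,\dots,t_N}\|J_k\|^{q(\gamma\chi+1)}<\infty$. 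This is exactly where the choice $\gamma=\frac{p-q}{q\chi}\wedge1$ is forced: it gives $q(\gamma\chi+1)\le p$ (so the $J_k$-moment is finite and $\|X^{(n)}_{t_k-}\|^{q(\gamma\chi+1)}\leq_C1+\|X^{(n)}_{t_k-}\|^p$), and at the same time $q\gamma=\frac{p-q}{\chi}\wedge q$, $q\gamma\chi=(p-q)\wedge(q\chi)$, which are the exponents appearing in the statement. Collecting the two groups of estimates (and noting that the case $k=0$ carries no large jump at $t_0=0$ and is just Corollary~\ref{cA3}) completes the proof. The step I expect to be the main obstacle is precisely this bookkeeping of the powers of $\|J_k\|$: one must check that pushing the jump through $c$ and through $\Phi(\delta^n_{t_k},\cdot)$ never produces an exponent exceeding $p$, which is what ties the interpolation parameter $\gamma$ to the constraint $q<p$.
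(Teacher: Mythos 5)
Your proposal is correct and follows essentially the same route as the paper's proof: tower conditioning through $\mathcal{F}_{t_k}$ to invoke Corollary \ref{cA3}, then absorbing the big jump $J_k$ (independent of $\mathcal{F}_{t_k-}$, with finite $p$-th moment) through the telescoping decomposition of $\Delta^n_{t_k}-\Delta^n_{t_k-}$, the measurability of $X^{(n)}_{t_k-}$, the bound $\|X^n_{t_k}\|^p\leq_C(1+\|X^n_{t_k-}\|^p)(1+\|J_k\|^p)$, and the choice $\gamma=\frac{p-q}{q\chi}\wedge 1$ forced by $q(\gamma\chi+1)\leq p$. The only deviations are cosmetic: you keep the five-term grouping of Lemma \ref{lA1Delta} (with the harmless extra $\|J_k\|$ factor from the $\Phi_{xx}$-based bound) where the paper regroups into six terms bounded via $\|\Phi_x\|\leq_C 1$, and you control the jump of $U^n$ through moments of $\|J_k\|$ rather than the paper's cruder remark.
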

\begin{rem} The  above estimate still holds for $N=0$ and/or $k=0$, if we adopt the convention
\ba
\mathcal{F}_{0-}=\mathcal{F}_{0},\quad \Delta_{0-}^n=0, \quad X_{0-}^{(n)}=X_{0-}^{n}=x, \quad U^n_{0-}=0.
\ea
In this case there is no actual jump at the time moment $t_0=0$, and the required estimate is provided by Corollary \ref{cA3}.
\end{rem}
\begin{proof}
We apply Corollary \ref{cA3} to get
\ba
\sup_{t\in[t_k, t_{k+1})}\E^{(N)}_{t_1,\dots,t_N}&\Big[\|\Delta_t^n\|^q\Big|\mathcal{F}_{t_k-}\Big]
\leq \sup_{t\in[t_k, t_{k+1})}\E^{(N)}_{t_1,\dots,t_N}
\Big[\E^{(N)}_{t_1,\dots,t_N}\Big[\|\Delta_t^n\|^q\Big|\mathcal{F}_{t_k}\Big]\Big|\mathcal{F}_{t_k-}\Big]\\
&\leq_C
\|\Delta_{t_k-}^n\|^q
+ \E^{(N)}_{t_1,\dots,t_N}\Big[
\|\Delta_{t_k}^n- \Delta_{t_k-}^n\|^q\Big|\mathcal{F}_{t_k-}\Big]\\
&+\E^{(N)}_{t_1,\dots,t_N}\Big[ h_n^{(\frac{p-q}{\chi}\wedge q)+(1\wedge\frac{q}{2})}(1+\|X_{t_k}^{(n)}\|^p)
+ h_n^{\frac{p-q}{\chi}\wedge q}(1+\|X_{t_k}^n\|^{p})\Big|\mathcal{F}_{t_k-} \Big]\\
&+\E^{(N)}_{t_1,\dots,t_N}\Big[  h_n^{1\wedge\frac{q}{2}}  (U_{t_k}^n)^q
+h_n^{(\frac{p-q}{\chi }\wedge q)   +(1\wedge\frac{q}{2})} (U_{t_k}^n)^{(p-q)\wedge (q\chi)}
+h_n^{1\wedge\frac{q}{2}} \Big|\mathcal{F}_{t_k-} \Big].
\ea
a) We have
\ba
X_{t_k}^n=X_{t_k-}^n+\Phi(\delta_{t_k}^n, Y_{t_k-}^n+c(\Xn_{t_k})J_k)-\Phi(\delta_{t_k}^n, Y^n_{t_k-}),
\ea
thus with the help of \eqref{e:Phix01} we get
\begin{equation}
\label{e:XJ}
1+\|X_{t_k}^n\|^{p}\leq_C 1+\|X_{t_k-}^n\|^{p}+\|J_k\|^p\leq ( 1+\|X_{t_k-}^n\|^{p})(1+\|J_k\|^p).
\end{equation}
b) Without loss of generality we can assume that $t_k\notin \{t^n_j\}$, so that
\ba
X_{t_k}^{(n)}=X_{t_k-}^{(n)}=X^n_{\eta^n_{t_k}}.
\ea
c)  Next,
\ba
\Delta_{t_k}^n-\Delta_{t_k-}^n
&=\Big(\Phi(\delta_{t_k}^n, Y_{t_k-}^n+c(\Xn_{t_k-})J_k)-\Phi(\delta_{t_k}^n, Y^n_{t_k-})\Big)-c(X_{t_k-})J_k\\
&=c(X_{t_k-}^n)J_k-c(X_{t_k-})J_k\\
&+c(\XN_{t_k-})J_k-c(X_{t_k-}^n)J_k\\
&+c(\Xn_{t_k-})J_k-c(\XN_{t_k-})J_k\\
&+\Phi(\delta_{t_k}^n, Y^n_{t_k-}+c(\Xn_{t_k-})J_k)-\Phi(\delta_{t_k}^n, \Xn_{t_k-}+c(\Xn_{t_k-})J_k)\\
&+\Phi(\delta_{t_k}^n, \Xn_{t_k-}+c(\Xn_{t_k-})J_k)-\Phi(\delta_{t_k}^n, \Xn_{t_k-})-c(\Xn_{t_k-})J_k\\
&+\Phi(\delta_{t_k}^n, \Xn_{t_k-})-\Phi(\delta_{t_k}^n, Y^n_{t_k-})\\
&=:\sum_{i=0}^{5}\Lambda^{d,i,n}_k .
\ea
For $i=0,1,2$ the estimates for $\Lambda^{d,i,n}$ are analogous to those for $\Lambda^{c,i,n}$ from Lemma \ref{lA1Delta}:
\ba
\|\Lambda^{d,0,n}_k\|\leq_C\|\Delta_{t_k-}^n\|\|J_k\|, \quad
\|\Lambda^{d,1,n}_k\|\leq_CU^n_{t_k-}\|J_k\|, \quad
\|\Lambda^{d,2,n}_k\|\leq_C h_n^{\gamma}(1+\|\Xn_{t_k-}\|^{\gamma\chi+1})\|J_k\|.
\ea
To estimate $\Lambda^{d,3,n},$ we observe that,
by \eqref{e:Phix01}, \eqref{e:Phix-x01} and \eqref{e:Phih}, for any $t\in[0,1]$ and $x,v\in\bR^d$
\ba
\|\Phi(t, x+v) - \Phi(t,x) - v \|&\leq \|v\|\sup_{\theta\in [0,1]}\|\Phi_x(t,x+\theta v) \|\\
&\leq_C t^{\gamma}(1+\|x\|^{\gamma\chi}+\|v\|^{\gamma\chi})\|v\|\\
&\leq_C t^{\gamma}(1+\|x\|^{\gamma\chi})(1+\|v\|^{\gamma\chi+1}).
\ea
Therefore
\ba
\|\Lambda^{d,3,n}_k\|\leq_C h_n^{\gamma}(1+\|\Xn_{t_k-}\|^{\gamma\chi})(1+\|J_k\|^{\gamma\chi+1}).
\ea
Finally, by \eqref{e:Phix01} we have
\ba
  \|\Lambda^{d,4,n}_k\|+\|\Lambda^{d,5,n}_k\|\leq_C \|Y^n_{t_k-}-\Xn_{t_k-}\|\leq_C U^n_{t_k-}.
\ea
Collecting the above estimates, we get
\ba
\|\Delta_{t_k}^n-\Delta_{t_k-}^n\|^q
\lc
\|\Delta_{t_k-}^n\|^q\|J_k\|^q
+ (U^n_{t_k-})^q \|J_k\|^q
+h_n^{\gamma q}(1+\|\Xn_{t_k-}\|^{\gamma\chi q+q})\|J_k\|^{\gamma\chi q +q}
+ (U^n_{t_k-})^q.
\ea
Taking
\ba
\gamma=\frac{p-q}{q\chi}\wedge 1
\ea
we get
\ba
\|\Delta_{t_k}^n-\Delta_{t_k-}^n\|^q
\lc \|\Delta_{t_k-}^n\|^q\|J_k\|^q+ (U^n_{t_k-})^q(1+\|J_k\|^q)+
   h_n^{\frac{p-q}{\chi}\wedge q}(1+\|\Xn_{t_k-}\|^{p})(1+\|J_k\|^{p}).
\ea
Recall that $J_k$ are identically distributed, independent of $\mathcal{F}_{t_k-}$
for each $k\in\bN$, and have finite $p$-th moment, hence
\ba
\E^{(N)}_{t_1,\dots,t_N}\Big[\|\Delta_{t_k}^n-\Delta_{t_k-}^n\|^q\Big|\mathcal{F}_{t_k-}\Big] \lc \|\Delta_{t_k-}^n\|^q+ (U^n_{t_k-})^q
+   h_n^{\frac{p-q}{\chi}\wedge q}(1+\|\Xn_{t_k-}\|^{p}).
\ea
d)
Finally, we note that
\ba
|U^n_{t_k}-U^n_{t_k-}|\leq_C 1,
\ea
and therefore
\ba
\E^{(N)}_{t_1,\dots,t_N}  \Big[  h_n^{1\wedge\frac{q}{2}}  (U_{t_k}^n)^q
&+h_n^{(\frac{p-q}{\chi }\wedge q)   +(1\wedge\frac{q}{2})} (U_{t_k}^n)^{(p-q)\wedge (q\chi)}
\Big|\mathcal{F}_{t_k-} \Big]\\
&\leq h_n^{1\wedge \frac{q}2}(U^n_{t_k-})^q
+h_n^{1\wedge \frac{q}2}(U_{t_k-}^n)^{(p-q)\wedge (q\chi)}+ h_n^{1\wedge\frac{q}{2}}.
\ea
Summarizing the above estimates, we get  the required inequality.
\end{proof}

Recall that we have assumed the number $N$ of large jumps and their time instants $t_k$, $k=1, \dots, N$
to be fixed. In what follows, we will study separately two scenarios.
Let
\ba
\label{typical}
C_N^n=\Big\{(t_1,\dots, t_N)\colon  t_{k}-t_{k-1}> h_n, \quad k=2,\dots, N\Big\}
\ea
be the ``typical'' set of well separated large jump times, and let
$D_N^n=(C_N^n)^c$ be
its complement on which
two large jumps can occur within one $h_n$-step of the numerical scheme.
\begin{lem}
\label{cA5}
For any $q\in(0,p)$,
there exists $C\in(0,\infty)$ such that
\ba
\label{e:estC}
 \sup_{t\in[0, T]}\E^{(N)}_{t_1,\dots,t_N}\Big[\|\Delta_t^n\|^q\Big]
\leq C^{N+1}h_n^{\frac{p-q}{\chi}\wedge q}(1+2\|x\|^p) +(2N+1) C^{N+1}h_n^{\frac{q}{2}\wedge 1}
\ea
for $n$ large enough uniformly over
$N\in\bN_0$ and $N$-tuples $\{t_1,\dots, t_N\}\in C^n_N$.
\end{lem}
\begin{proof}
For $N\in\bN_0$, denote
\ba
L_k^n&=\|\Delta_{t_k-}^n\|^q, \quad k=1, \dots, N+1,\quad && L_0^n=0,\\
u_k^n&=(U_{t_{k}-}^n)^q+h_n^{1\wedge \frac{q}2}(U_{t_k-}^n)^{(p-q)\wedge (q\chi)}, \quad k=1, \dots, N+1,\quad &&u_0^n=0,\\
\Xi_k^n&=h_n^{\frac{p-q}{\chi}\wedge q}(1+\|X^n_{t_{k}-}\|^p + \|X^{(n)}_{t_{k}-}\|^p),
\quad k=1, \dots, N+1,\quad &&\Xi_0^n=h_n^{\frac{p-q}{\chi}\wedge q}(1+2\|x\|^p).
\ea
Since we aim to establish the estimate \eqref{e:estC} for all $N\in\mathbb N_0$ with the same constant $C$, we henceforth abandon the simplified notation $\leq_C$ and track significant constants explicitly.

\noindent
i) By Lemma \ref{lA4cor} and the Fatou lemma, there is $C_1\in(0,\infty)$ such that, for $k=0, \dots, N$
\ba\label{e:est-norm}
\E^{(N)}_{t_1,\dots,t_N}\Big[L^n_{k+1}\Big|\mathcal{F}_{t_k-}\Big]&\leq \sup_{t\in [t_k, t_{k+1})} \E^{(N)}_{t_1,\dots,t_N}\Big[\|\Delta_{t}^n\|^q\Big|\mathcal{F}_{t_k-}\Big]
\\&\leq C_1\Big( \|\Delta_{t_k-}^n\|^q+ h_n^{\frac{p-q}{\chi}\wedge q}(1+\|X_{t_k-}^n\|^{p}+\|X_{t_k-}^{(n)}\|^{p})
\\&\hspace*{2cm}+(U^n_{t_k-})^q+h_n^{1\wedge \frac{q}2}(U_{t_k-}^n)^{(p-q)\wedge (q\chi)}+h_n^{\frac{q}2\wedge 1}\Big)\\
&\leq C_1 L_k^n + C_1 \Xi_k^n + C_1 u_k^n +C_1 h_n^{\frac{q}2\wedge 1}.
\ea
ii) By Lemma \ref{l:estXn},
the Fatou lemma, and \eqref{e:XJ}
\ba
\E^{(N)}_{t_1,\dots,t_N}\Big[\|X^n_{t_{k+1}-}\|^p\Big|\mathcal{F}_{t_k-}\Big]
&\leq
 \sup_{t\in[t_k, t_{k+1})}\E^{(N)}_{t_1,\dots,t_N}\Big[\|X^n_{t}\|^p\Big|\mathcal{F}_{t_k-}\Big]
\\&\leq
 \E^{(N)}_{t_1,\dots,t_N}\Big[\sup_{t\in[t_k, t_{k+1})} \E^{(N)}_{t_1,\dots,t_N}
\Big[\|X^n_{t }\|^p\Big|\mathcal{F}_{t_k}\Big]\Big|\mathcal{F}_{t_k-}\Big]
\\
&\leq_C  \E^{(N)}_{t_1,\dots,t_N}\Big[1+ \|X^n_{t_{k}}\|^p\Big|\mathcal{F}_{t_k-}\Big]\\
&\leq_C  1+ \|X^n_{t_{k}-}\|^p.
\ea
In addition, $X^{(n)}_{t_{k+1}-}=X^{n}_{\eta^n_{t_{k+1}}}$ with $\eta^n_{t_{k+1}}\in [t_k, t_{k+1})$, so that analogously
\ba\label{e:cor1}
\E^{(N)}_{t_1,\dots,t_N}\Big[ \|X^{(n)}_{t_{k+1}-}\|^p\Big|\mathcal{F}_{t_k-}\Big]
&
 \leq_C  \E^{(N)}_{t_1,\dots,t_N}\Big[1+ \|X^n_{t_{k}}\|^p\Big|\mathcal{F}_{t_k-}\Big]\\
&\leq_C  1+ \|X^n_{t_{k}-}\|^p.
\ea
Therefore there is a constant $C_2\in(0,\infty)$ such that
\ba\label{e:cor2}
\E^{(N)}_{t_1,\dots,t_N}\Big[\Xi^n_{k+1}\Big|\mathcal{F}_{t_k-}\Big]\leq C_2 \Xi^n_{k}.
\ea
For $k=0$ this estimate follows directly from \eqref{e:estXn}.

\noindent
iii) Finally, under \eqref{typical} we have
\ba
\label{e:ueq}
u^n_{k+1}&=(h_n+\|B_{t_{k+1}}-B_{\eta^n_{t_{k+1}}}\|+\|\widehat Z_{t_{k+1}-}-\widehat Z_{\eta^n_{t_{k+1}}}\|)^q
\\&\hspace*{2cm}+h_n^{1\wedge \frac{q}2}(h_n+\|B_{t_{k+1}}-B_{\eta^n_{t_{k+1}}}\|+\|\widehat Z_{t_{k+1}-}-\widehat Z_{\eta^n_{t_{k+1}}}\|)^{(p-q)\wedge (q\chi)},
\ea
which is independent on $\mathcal{F}_{t_{k}-}$ because $\eta^n_{t_{k+1}}>t_k$. Hence by \eqref{e:momentsNoise}
there exists $C_3\in(0,\infty)$ such that
\begin{equation}
\label{e:uest}
\E^{(N)}_{t_1,\dots,t_N}\Big[ u_{k+1}\Big|\mathcal{F}_{t_k-}\Big]\leq C_3 h_n^{\frac{q}{2}\wedge 1}, \quad k=0, \dots, N.
\end{equation}
We can summarize these estimates as follows. Let $C_4=\max\{C_1,C_2, C_3\}$, then
\ba
\E^{(N)}_{t_1,\dots,t_N}\left[\left.
\begin{pmatrix}
    L_{k+1}^n \\
    \Xi_{k+1}^n \\
    u_{k+1}^n \\
\end{pmatrix}
\right|\mathcal{F}_{t_k-}\right]
\leq C_4
\begin{pmatrix}
                   1 & 1 & 1 \\
                   0 & 1 & 0 \\
                   0 & 0 & 0 \\
\end{pmatrix}
\begin{pmatrix}
   L_{k}^n \\
    \Xi_{k}^n \\
    u_{k}^n \\
\end{pmatrix}
+C_4 h_n^{\frac{q}{2}\wedge 1}
\begin{pmatrix}
1 \\
0 \\
1 \\
\end{pmatrix} , \quad k=0, \dots, N.
\ea
Denote for a vector $v=(v^1, v^2, v^3)^T\in \mathbb{R}^3$ and a matrix $Q\in \mathbb{R}^{3\times3}$
\ba
\|v\|_1:=|v_1|+|v_2|+|v_3|, \quad \|Q\|_1:=\sup_{\|v\|_1=1}\|Qv\|_1.
\ea
Then it is a simple calculation that
\ba
\left\|
\begin{pmatrix}
                   1 & 1 & 1 \\
                   0 & 1 & 0 \\
                   0 & 0 & 0 \\
\end{pmatrix}
\right\|_1=2,
\quad
\left\|\begin{pmatrix}
              1 \\
               0 \\
               1 \\
             \end{pmatrix}
\right\|_1=2.
\ea
Using \eqref{e:est-norm}, we get then
 for each $k=0,\dots, N$ and $C=(2C_4)\vee 1\geq  C_1,$
\ba
\sup_{t\in [t_k, t_{k+1)}}\E^{(N)}_{t_1,\dots,t_N} \|\Delta_t^n\|^q
&\leq
C_1\E^{(N)}_{t_1,\dots,t_N}
\left\|\begin{pmatrix}
    L_{k}^n \\
    \Xi_{k}^n \\
    u_{k}^n \\
\end{pmatrix}\right\|_1+C_1h_n^{\frac{q}{2}\wedge1}
\\&\leq C_1(2C_4)^{k} \left\|\left(
  \begin{array}{c}
    L_{0}^n \\
    \Xi_{0}^n \\
    u_{0}^n \\
  \end{array}
\right)\right\|_1+2h_n^{\frac{q}{2}\wedge1}\sum_{j=0}^{k-1}(2C_4)^{j+1}+C_1h_n^{\frac{q}{2}\wedge1}
\\&\leq C^{k+1}h_n^{\frac{p-q}{\chi}\wedge q}(1+2\|x\|^p) +(2k+1)C^{k+1}h_n^{\frac{q}{2}\wedge1}.
\ea
Since
\ba
\Delta_{t_{N+1}-}=\Delta_{t_N}\hbox{ a.s.,}
\ea
we finally get
\ba
\sup_{t\in [0, T]}\E^{(N)}_{t_1,\dots,t_N} \|\Delta_t^n\|^q
&=\max_{k=0,\dots,N}\sup_{t\in [t_k, t_{k+1)}}\E^{(N)}_{t_1,\dots,t_N} \|\Delta_t^n\|^q\\
&\leq C^{N+1}h_n^{\frac{p-q}{\chi}\wedge q}(1+2\|x\|^p) +(2N+1) C^{N+1}h_n^{\frac{q}{2}\wedge 1 }.
\ea
\end{proof}
Without restriction \eqref{typical} on the times $(t_1,\dots,t_N)$, we still have the following weaker version of Lemma \ref{cA5}.
\begin{lem}
\label{cA7}
 For any $q\in(0,p)$,
there exists $C\in(0,\infty)$ such that
\ba
\sup_{t\in[0, T]}\E^{(N)}_{t_1,\dots,t_N}\Big[\|\Delta_t^n\|^q\Big]
\leq C^{N+1}h_n^{\frac{p-q}{\chi}\wedge q}(1+2\|x\|^{p})+ (2N+1)C^{N+1}
\ea
for $n$ large enough uniformly over
$N\in\bN_0$ and $N$-tuples $\{t_1,\dots, t_N\}\subset (0,T)$.
\end{lem}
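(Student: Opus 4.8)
The plan is to re-run the proof of Lemma~\ref{cA5} at the same order --- any $q\in(0,p)$, the order at which the inputs Corollary~\ref{cA3} and Lemma~\ref{lA4cor} operate --- on the same decomposition $t_0=0<t_1<\dots<t_N<t_{N+1}=T$ and with the same three recursion quantities $L^n_k=\|\Delta^n_{t_k-}\|^q$, $\Xi^n_k=h_n^{\frac{p-q}{\chi}\wedge q}(1+\|X^n_{t_k-}\|^p+\|X^{(n)}_{t_k-}\|^p)$ and $u^n_k=(U^n_{t_k-})^q+h_n^{1\wedge\frac q2}(U^n_{t_k-})^{(p-q)\wedge(q\chi)}$, now dropping the separation hypothesis~\eqref{typical}. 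The first observation is that the one-step \emph{conditional} estimates behind items~1 and~2 of that proof never used~\eqref{typical}: every cell $[t_k,t_{k+1})$ carries no large jump of $Z$ under $\P^{(N)}_{t_1,\dots,t_N}$ irrespective of the spacing of the $t_j$, so Corollary~\ref{cA3} and Lemma~\ref{lA4cor} (which give $\E^{(N)}_{t_1,\dots,t_N}[L^n_{k+1}\mid\mathcal F_{t_k-}]\le C_1L^n_k+C_1\Xi^n_k+C_1u^n_k+C_1h_n^{\frac q2\wedge1}$) and the propagation $\E^{(N)}_{t_1,\dots,t_N}[\Xi^n_{k+1}\mid\mathcal F_{t_k-}]\le C_2\Xi^n_k$ (from Lemmas~\ref{lA1Delta} and~\ref{l:estXn} together with~\eqref{e:XJ}, all of which go through because every cross-jump contribution to $X^n$ and to $\Delta^n$ carries $\|J_k\|$ only to powers $\le p$, finite by $\mathbf{H}_{\nu,p}$) hold verbatim with the same constants. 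In particular $\E^{(N)}_{t_1,\dots,t_N}[\Xi^n_k]\le C^k h_n^{\frac{p-q}{\chi}\wedge q}(1+2\|x\|^p)$ since $\Xi^n_0$ is deterministic.

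The only place where~\eqref{typical} entered is item~3 of the proof of Lemma~\ref{cA5}: the bound $\E^{(N)}_{t_1,\dots,t_N}[u^n_{k+1}\mid\mathcal F_{t_k-}]\le C_3h_n^{\frac q2\wedge1}$ used that $[\eta^n_{t_{k+1}},t_{k+1})$ lies strictly to the right of $t_k$, so that $U^n_{t_{k+1}-}$ is independent of $\mathcal F_{t_k-}$ and contains no large jump. Without separation this fails, and I would replace it by the crude \emph{unconditional} bound: the discretization cell $[\eta^n_{t_{k+1}},t_{k+1})$ hosts at most $N$ of the points $t_1,\dots,t_N$, and under $\P^{(N)}_{t_1,\dots,t_N}$ the corresponding $J_j$ are i.i.d.\ and independent of $B,\widehat Z$, so by the power--mean inequality and $\mathbf{H}_{\nu,p}$ (giving $\E\|J\|^{p'}<\infty$ for every $p'\le p$, in particular $p'=q$ and $p'=(p-q)\wedge(q\chi)$) one gets $\E^{(N)}_{t_1,\dots,t_N}[u^n_{k+1}]\le C_3(N+1)^{p}$ for $n$ large enough, uniformly over all tuples $\{t_1,\dots,t_N\}\subset(0,T)$. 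Taking full $\E^{(N)}_{t_1,\dots,t_N}$ in the items~1--2 recursions and iterating precisely as in Lemma~\ref{cA5} --- the same $3\times3$ transition matrix of $\|\cdot\|_1$-norm $2$, only the third coordinate of the inhomogeneous term being $C_3(N+1)^p$ instead of $C_3h_n^{\frac q2\wedge1}$ --- then yields the claim. The power of $h_n$ survives solely through $\Xi^n_0=h_n^{\frac{p-q}{\chi}\wedge q}(1+2\|x\|^p)$, i.e.\ through the opening cell $[0,t_1)$ where $\Delta^n_0=0$, $U^n_0=0$ and $x$ is deterministic and where Corollary~\ref{cA3} with its balancing choice $\gamma=\tfrac{p-q}{q\chi}\wedge1$ makes the $\|X^n\|$-factor exactly $\|X^n\|^p$; the later cells contribute only $O(1)$ per step, which is exactly why the second term now carries the factor $(2N+1)$ with no power of $h_n$, the polynomial-in-$N$ constants being absorbed into $C^{N+1}$.

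The step I expect to be the genuine content --- the only new point relative to Lemma~\ref{cA5} --- is the uniform control of the clustered-jump cells: one must verify that pouring arbitrarily many of the $t_j$'s into a single $h_n$-cell inflates the per-step constants only by a power of $N$, with no dependence on the gaps $t_j-t_{j-1}$ nor on $h_n^{-1}$. This rests on the power--mean inequality used together with $\mathbf{H}_{\nu,p}$ everywhere at exponents $\le p$, and on the near-cancellation of a large jump in the difference $X^n-X$ (the $\Lambda^{d,i,n}$-decomposition in the proof of Lemma~\ref{lA4cor}), which keeps $\|\Delta^n_{t_j}\|\leq_C\|\Delta^n_{t_j-}\|(1+\|J_j\|)$ modulo lower-order terms and so allows the recursion to close with $\E\|J\|^{p}<\infty$ as the only tail input.
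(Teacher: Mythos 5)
Your argument is correct in substance and follows the paper's overall strategy (re-run the recursion of Lemma~\ref{cA5} over the cells $[t_k,t_{k+1})$ without the separation hypothesis \eqref{typical}), but it fixes the one step where \eqref{typical} was used in a different way. The paper keeps the conditional, per-step structure: when several $t_j$ fall into one discretization cell, the bound \eqref{e:ueq} for $u^n_{k+1}$ is replaced by the recursive inequality \eqref{e:ueq1}, giving $\E^{(N)}_{t_1,\dots,t_N}[u_{k+1}\,|\,\cF_{t_k-}]\leq C_3(h_n^{\frac q2\wedge 1}+1+u_k)$ instead of \eqref{e:uest}; iterating the same matrix recursion then yields exactly the $(2N+1)C^{N+1}$ term with no power of $h_n$. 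You instead discard conditioning for the $u$-coordinate and use the crude unconditional bound $\E^{(N)}_{t_1,\dots,t_N}[u^n_{k+1}]\lesssim (N+1)^p$, obtained by counting at most $N$ big jumps per cell and using the power--mean inequality together with $\mathbf H_{\nu,p}$ at exponents $q$ and $(p-q)\wedge(q\chi)\leq p$. Since all recursion inequalities are linear in $(L,\Xi,u)$, taking full expectations is harmless, and the polynomial-in-$N$ loss is indeed absorbable into $C^{N+1}$ (and is immaterial for the final summation \eqref{CPF}, where the contribution of $D^n_N$ is damped by its measure $\leq T^N N^2 h_n/(N!\,T)$). So your route is slightly cruder in constants but perfectly adequate, and it buys a simpler one-line treatment of the clustered cells; the paper's conditional recursion is sharper and keeps the stated $(2N+1)$ factor literally.

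One imprecision: your claim that items 1--2 of the proof of Lemma~\ref{cA5} hold ``verbatim'' is not quite right. The intermediate estimate \eqref{e:cor1} does use that $\eta^n_{t_{k+1}}\in[t_k,t_{k+1})$, which can fail when $t_k$ and $t_{k+1}$ share a discretization cell; the paper points this out explicitly. The propagation \eqref{e:cor2} for $\Xi^n_k$ nevertheless survives, because in that case $X^{(n)}_{t_{k+1}-}=X^{(n)}_{t_k-}$, while the $\|X^n_{t_{k+1}-}\|^p$-part is still controlled via Lemma~\ref{l:estXn} and \eqref{e:XJ}. This is a trivial case distinction, so it does not endanger your proof, but it should be stated rather than subsumed under ``verbatim''.
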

\begin{proof} The proof repeats the proof of Lemma \ref{cA5}, with just two minor changes, which we explain now.
When \eqref{typical} fails, several time instants, say, $t_{k}, t_{k+1}$, may belong
to the same discretization interval
\ba
{}[\eta^n_{t_{k+1}}, \eta^n_{t_{k+1}} + h_n)= [\eta^n_{t_{k}}, \eta^n_{t_{k}} + h_n).
\ea
In this case \eqref{e:cor1} is not true. However, in this case we have
$X^{(n)}_{t_{k+1}-}=X^{(n)}_{t_{k}-}$ and \eqref{e:cor2} still holds.

The estimate \eqref{e:ueq} in this case fails to be true, and instead we have
\begin{equation}
\label{e:ueq1}
\begin{aligned}
u^n_{k+1}
&\lc (\|B_{t_{k+1}}-B_{t_{k}}\|+\|\widehat Z_{t_{k+1}-}-\widehat Z_{t_{k}-}\|)^q+\|J_k\|^q\\
&+h_n^{1\wedge \frac{q}2}(h_n+\|B_{t_{k+1}}-B_{t_k}\|+\|\widehat Z_{t_{k+1}-}-\widehat Z_{t_k}\|)^{(p-q)\wedge (q\chi)}+
h_n^{1\wedge \frac{q}2}\|J_k\|^{(p-q)\wedge (q\chi)}+u_k.
\end{aligned}
\end{equation}
Therefore, instead of \eqref{e:uest}, we arrive to the estimate
\begin{equation}
\label{e:uest1}
\E^{(N)}_{t_1,\dots,t_N}\Big[u_{k+1}\Big|\cF_{t_k-} \Big]\leq C_3(h_n^{\frac{q}{2}\wedge 1}+1+u_{k}).
\end{equation}
Repeating literally the rest of the proof of Lemma \ref{cA5} with \eqref{e:uest} replaced by a weaker estimate \eqref{e:uest1} for every $k$, we get the required statement.
\end{proof}

Finally, we prove Theorem \ref{t:thm_q}.

\begin{proof}
We recall that conditioned on the event $\{N_T=N\}$, the jump times of the compound Poisson process $Q$
have law of the uniform order statistics on the interval $[0,T]$. Hence by the formula of the total probability
we get
\ba
\label{CPF}
\sup_{t\in[0, T]} \E\|\Delta_t^n\|^q=\ex^{-\lambda T}\sum_{N=0}^\infty \frac{\lambda^N}{T^N }\idotsint_{0<t_1\dots<t_N<T}
\sup_{t\in[0, T]}\E^{(N)}_{t_1,\dots, t_N}\Big[\|\Delta_t^n\|^q\Big]\,\di t_1\dots \di t_N.
\ea
By Lemmas \ref{cA5} and \ref{cA7}, there is $C\in(0,\infty)$ such that for all $N\in\bN_0$, all $N$-tuples
$\{t_1,\dots,t_N\}\subset (0,T)$ and $n$ large enough
\ba
\label{e:CD}
\sup_{t\in[0, T]}\E^{(N)}_{t_1,\dots, t_N}\|\Delta_t^n\|^q
&\leq C^{N+1}h_n^{\frac{p-q}{\chi}\wedge q}(1+2\|x\|^{p})+ (2N+1)C^{N+1}h_n^{\frac{q}2\wedge 1},
\quad (t_1,\dots, t_N)\in C_N^n,
\\
\sup_{t\in[0, T]} \E^{(N)}_{t_1,\dots, t_N}\|\Delta_t^n\|^q&\leq C^{N+1}h_n^{\frac{p-q}{\chi}\wedge q}(1+2\|x\|^{p})
+ (2N+1)C^{N+1}, \quad (t_1,\dots, t_N)\in D_N^n.
\ea
On the other hand,
\ba
& \int_{t_1<\cdots<t_N}\,\di t_1\dots \di t_N=\frac{T^N}{ N!},\\
& \int_{C_N^n}\,\di t_1\dots \di t_N
= \int_{Nh_n}^T \di t_N\int_{(N-1)h_n}^{t_N-h_n}\, \di t_{N-1}\cdots\int_{h_n}^{t_{3}-h_n}\, \di t_2
\int_0^{t_{2}-h_n}\, \di t_1=\frac{(T-Nh_n)^N_+}{ N!}\leq \frac{T^N}{ N!},\\
&\int_{D_N^n}\di t_1\dots \di t_N=\frac{T^N}{N!}\Big(1-\Big(1-\frac{Nh_n}{T}\Big)^N_+\Big)
\leq \frac{T^N}{N!} \cdot  \frac{N^2h_n}{T}.
\ea
Hence,
\ba
\E\sup_{t\in[0, T]}\|\Delta_t^n\|^q
&\leq \left(\sum_{N=0}^\infty \frac{C^{N+1}T^N}{ N!} \right)h_n^{\frac{p-q}{\chi}\wedge q}(1+2\|x\|^{p})
+ \left(\sum_{N=0}^\infty (2N+1)\frac{C^{N+1}T^N}{ N!} \right)h_n^{\frac{q}2\wedge 1}\\
&+ \left(\sum_{N=0}^\infty \frac{N^2h_n}{T}\frac{C^{N+1}T^N}{ N!} \right)h_n^{\frac{p-q}{\chi}\wedge q}(1+2\|x\|^{p})
+  \left(\sum_{N=0}^\infty (2N+1)\frac{N^2h_n}{T}\frac{C^{N+1}T^N}{ N!}\right)\\
&
\lc h_n^{\frac{p-q}{\chi}\wedge q}(1+2\|x\|^{p})+h_n^{\frac{q}2\wedge 1}\\
& \lc h_n^{\bar \delta}(1+\|x\|^{p}).
\ea
\end{proof}

\section{Proof of Theorem \ref{t:thm_q_sup}\label{s:5}}

Proof of Theorem \ref{t:thm_q_sup} follows the line of the one of Theorem \ref{t:thm_q}, with one additional  step which improves the moment estimates from Lemma \ref{lA1Delta} to maximal moment estimates.
\begin{lem}
\label{lA2}
Let $r\in[2,\infty)$. Then, for any $\gamma\in [0,1]$
\ba
\label{e:r}
\E^{(N)}_{t_1,\dots,t_N}\Big[\sup_{t\in[t_k, t_{k+1})}\|\Delta_t^n\|^r\Big|\mathcal{F}_{t_k}\Big]
&\leq_C \|\Delta_{t_k}^n\|^r
+h_n^{r\gamma +\frac{1}{2}}(1+\|X_{t_k}^{(n)}\|^{r\gamma\chi+r}) +h_n^{r\gamma }(1 + \|X^n_{t_k}\|^{r\gamma\chi+r})\\
&+h_n  (U_{t_k}^n)^r
+ h_n^{r\gamma+\frac12} (U_{t_k}^n)^{r\gamma\chi}  + h_n^{\frac{1}{2}}.
\ea
for $n$ large enough uniformly over
$N\in\bN_0$, partitions $\{t_1,\dots, t_N\}$,  and $k=0, \dots, N$.
\end{lem}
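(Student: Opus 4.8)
The plan is to re-run the argument of Lemma \ref{lA1Delta} from the same It\^o representation \eqref{Ito2}, $\|\Delta_t^n\|^r=\|\Delta_{t_k}^n\|^r+\int_{t_k}^t\rmD_s^{n,k}\,\di s+\rmM_t^{n,k}$ on $[t_k,t_{k+1})$ with $\rmD_s^{n,k}\lc\|\Delta_s^n\|^r+\rmF_s^{n,k}$ as in \eqref{estG}, but now taking $\sup_{t\in[t_k,t_{k+1})}$ before the conditional expectation $\E^{(N)}_{t_1,\dots,t_N}[\,\cdot\mid\cF_{t_k}]$. The drift contribution is left unchanged: $\sup_{t\in[t_k,t_{k+1})}\int_{t_k}^t\rmD_s^{n,k}\,\di s\le\int_{t_k}^{t_{k+1}}(\|\Delta_s^n\|^r+\rmF_s^{n,k})\,\di s$, and inserting the $\Lambda$-bounds \eqref{Lambda1}, \eqref{Lambda2}, the noise moment estimates \eqref{e:momentsNoise} and Lemma \ref{l:estXn} (with the same case split $[t_k,t_k^{n,*})$ vs.\ $[t_k^{n,*},t_{k+1})$) reproduces on the right-hand side of \eqref{e:r} exactly those terms that already carry a full power of $h_n$ coming from the length of the integration interval, in particular $h_n(U_{t_k}^n)^r$ and the $h_n^{r\gamma}$-term in $\|X_{t_k}^n\|$. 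The genuinely new task is to estimate $\E^{(N)}_{t_1,\dots,t_N}[\sup_{t\in[t_k,t_{k+1})}|\rmM_t^{n,k}|\mid\cF_{t_k}]$, where $\rmM^{n,k}=\rmM^{n,k,\mathrm c}+\rmM^{n,k,\mathrm d}$ splits (see the last two lines of \eqref{Ito}) into the Brownian integral $r\int_{t_k}^t\|\Delta_s^n\|^{r-2}\langle\Delta_s^n,(\sum_{i=0}^4\Lambda_s^{b,i,n})\,\di B_s\rangle$ and the compensated Poisson integral $\int_{t_k}^t\int_{\|z\|\le1}\big(\|\Delta_{s-}^n+\sum_{i=0}^4\Lambda_s^{c,i,n}(z)\|^r-\|\Delta_{s-}^n\|^r\big)\widetilde N(\di z,\di s)$.

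For this I would apply the Burkholder--Davis--Gundy inequality, $\E[\sup_t|\rmM_t^{n,k}|\mid\cF_{t_k}]\lc\E[([\rmM^{n,k}]_{t_{k+1}})^{1/2}\mid\cF_{t_k}]$, the continuous-part bound $[\rmM^{n,k,\mathrm c}]_{t_{k+1}}\lc\int_{t_k}^{t_{k+1}}\|\Delta_s^n\|^{2r-2}\|\sum_i\Lambda_s^{b,i,n}\|^2\,\di s$, and, via $|\,\|a+b\|^r-\|a\|^r\,|\lc(\|a\|^{r-1}+\|b\|^{r-1})\|b\|$, the jump-part bound $[\rmM^{n,k,\mathrm d}]_{t_{k+1}}\lc\int_{t_k}^{t_{k+1}}\int_{\|z\|\le1}\big(\|\Delta_{s-}^n\|^{2r-2}+\|\sum_i\Lambda_s^{c,i,n}(z)\|^{2r-2}\big)\|\sum_i\Lambda_s^{c,i,n}(z)\|^2\,N(\di z,\di s)$. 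In every resulting term of the form $\|\Delta_s^n\|^{2r-2}\|\Lambda\|^2=\|\Delta_s^n\|^r\cdot\|\Delta_s^n\|^{r-2}\|\Lambda\|^2$ I would pull $\sup_{s\in[t_k,t_{k+1})}\|\Delta_s^n\|^r$ out of the integral, apply $\sqrt{ab}\le\tfrac{\vartheta}{2}a+\tfrac{1}{2\vartheta}b$ with a small $\vartheta>0$, and absorb the resulting $\tfrac{\vartheta}{2}\sup_s\|\Delta_s^n\|^r$ into the left-hand side of \eqref{e:r}; to make this legitimate one first replaces $\Delta^n$ by the stopped process $\Delta_{\cdot\wedge\tau_M}^n$, $\tau_M=\inf\{t:\|\Delta_t^n\|\ge M\}$, and passes to the limit $M\to\infty$ by Fatou only at the end. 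The leftover factor $\int_{t_k}^{t_{k+1}}\|\Delta_s^n\|^{r-2}\|\Lambda\|^2\,\di s$ (resp.\ its $\nu$-counterpart, after replacing $N$ by its compensator since the integrand is nonnegative) is then handled by Young's inequality $\|\Delta_s^n\|^{r-2}\|\Lambda\|^2\lc\|\Delta_s^n\|^r+\|\Lambda\|^r$ and collapses into $\int_{t_k}^{t_{k+1}}(\|\Delta_s^n\|^r+\rmF_s^{n,k})\,\di s$, the same expression as in the drift --- these pieces therefore do not worsen the $h_n$-powers.

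The $h_n^{1/2}$-loss relative to Lemma \ref{lA1Delta} enters only through the pure factor $\|\sum_i\Lambda_s^{c,i,n}(z)\|^{2r}$ in $[\rmM^{n,k,\mathrm d}]$, which carries no power of $\|\Delta^n\|$ to be paired with. For it I would use Jensen's inequality to bring the conditional expectation under the outer square root, pass to the compensator, use \eqref{Lambda2} to reduce $2r-2$ of the factors to $\|z\|$ and \eqref{Lambda1} together with $\int_{\|z\|\le1}\|z\|^{2r}\nu(\di z)\lc1$ for the other two, reaching $\E[(\int_{t_k}^{t_{k+1}}\int_{\|z\|\le1}\|\sum_i\Lambda_s^{c,i,n}(z)\|^{2r}N(\di z,\di s))^{1/2}\mid\cF_{t_k}]\lc\big(\int_{t_k}^{t_{k+1}}\E^{(N)}_{t_1,\dots,t_N}[R_s^2\mid\cF_{t_k}]\,\di s\big)^{1/2}$, where $R_s:=h_n^{\gamma}(1+\|X_s^{(n)}\|^{\gamma\chi+1})+U_s^n+h_n^{\gamma}(U_s^n)^{\gamma\chi}$ is the common bracket from \eqref{Lambda1}. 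Since the inner integral runs over an interval of length $h_n$, its expectation is of order $h_n$ times the familiar polynomial expressions in $\|X_{t_k}^{(n)}\|$, $\|X_{t_k}^n\|$ and $U_{t_k}^n$, and the outer square root turns this into $h_n^{1/2}$ times half-powers of those; by Young's inequality (with an $n$-dependent weight where needed) these are dominated by the five terms $h_n^{r\gamma+1/2}(1+\|X_{t_k}^{(n)}\|^{r\gamma\chi+r})$, $h_n^{r\gamma}(1+\|X_{t_k}^n\|^{r\gamma\chi+r})$, $h_n(U_{t_k}^n)^r$, $h_n^{r\gamma+1/2}(U_{t_k}^n)^{r\gamma\chi}$, $h_n^{1/2}$ appearing on the right-hand side of \eqref{e:r}. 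Collecting the drift, the absorbed and the leftover martingale contributions yields, for $g_t:=\E^{(N)}_{t_1,\dots,t_N}[\sup_{s\in[t_k,t]}\|\Delta_{s\wedge\tau_M}^n\|^r\mid\cF_{t_k}]$, a bound $g_t\le\tfrac12 g_{t_{k+1}}+C\int_{t_k}^t g_s\,\di s+(\text{r.h.s.\ of }\eqref{e:r})$; taking $t=t_{k+1}$, moving $\tfrac12 g_{t_{k+1}}$ to the left and, for $n$ large enough, absorbing $C\int_{t_k}^{t_{k+1}}g_s\,\di s\le Ch_n g_{t_{k+1}}$ there as well gives $g_{t_{k+1}}\lc(\text{r.h.s.\ of }\eqref{e:r})$, and since $g$ is nondecreasing, Fatou's lemma ($M\to\infty$) yields \eqref{e:r}.

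The step I expect to be the main obstacle is the jump-martingale bookkeeping: one has to juggle $\|z\|$-weights for the exponents $2$, $r-2$, $r-1$, $2r$ at once (using $\|z\|\le1$ to trade high powers for $\|z\|^2$), track which summands $\Lambda^{c,i,n}$ carry $U_{t_k}^n$ and which carry $\|X_{t_k}^{(n)}\|$-powers, check that every half-power term emerging from the outer square root is dominated by the five terms of \eqref{e:r}, and make sure the $\tfrac{\vartheta}{2}\sup_s\|\Delta_s^n\|^r$ pieces are absorbed strictly before the $M\to\infty$ limit. Everything else --- the drift estimate, the noise moment bounds \eqref{e:momentsNoise}, the case split on $[t_k,t_{k+1})$, and Lemma \ref{l:estXn} --- carries over verbatim from the proof of Lemma \ref{lA1Delta}.
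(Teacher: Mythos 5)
Your overall architecture is the same as the paper's (the It\^o representation \eqref{Ito2}, the drift handled as in Lemma \ref{lA1Delta}, BDG plus a square root of the bracket for $\rmM^{n,k}$), and your absorption device for the $\|\Delta\|^{2r-2}$-carrying bracket terms is a workable alternative to the paper's shortcut. But there are two concrete problems. First, a factual one that recurs in your argument: the interval $[t_k,t_{k+1})$ is \emph{not} of length $h_n$ — the $t_k$ here are the big-jump times of $Q$, so $t_{k+1}-t_k$ can be of order $T$. The $h_n$-factors in Lemma \ref{lA1Delta} come from the renewal structure (the case split $[t_k,t_k^{n,*})$ versus $[t_k^{n,*},t_{k+1})$ together with \eqref{e:momentsNoise} and Lemma \ref{l:estXn}), not from the length of the integration interval. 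This invalidates your estimate of the "pure" jump contribution as "$h_n$ times the familiar polynomial expressions", and it also breaks your closing step "absorb $C\int_{t_k}^{t_{k+1}}g_s\,\di s\le C h_n g_{t_{k+1}}$ for $n$ large": that integral is only bounded by $T\,g_{t_{k+1}}$, so you must close with an ordinary Gronwall argument over an interval of length at most $T$ (which works, with constant $\ex^{CT}$), not with $h_n$-smallness.

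Second, and more seriously, your bookkeeping of the pure term $\|\sum_i\Lambda^{c,i,n}_s(z)\|^{2r}$ loses the claim. By downgrading $2r-2$ factors to $\|z\|$ via \eqref{Lambda2} and keeping only $R_s^2\|z\|^{2r}$, you arrive, after Jensen, the compensator, the (correctly performed) case-split integration and the outer square root, at terms that are of \emph{first} order in the state variables with the wrong $h_n$-weights: schematically $h_n^{\gamma}\bigl(1+\|X^n_{t_k}\|^{\gamma\chi+1}\bigr)$, $h_n^{\gamma+\frac12}\|X^{(n)}_{t_k}\|^{\gamma\chi+1}$, $h_n^{\frac12}U^n_{t_k}$ and a bare $h_n^{\gamma}$. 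No version of Young's inequality, weighted or not, converts these into the five terms of \eqref{e:r}: for instance the bare $h_n^{\gamma}$ exceeds both $h_n^{r\gamma}$ and $h_n^{1/2}$ whenever $\gamma<\tfrac12$, and trading $h_n^{\gamma}\|X^n_{t_k}\|^{\gamma\chi+1}$ for $h_n^{r\gamma}\|X^n_{t_k}\|^{r\gamma\chi+r}$ necessarily leaves an additive remainder of order $1$ (or worsens the $h_n$-power), which the right-hand side of \eqref{e:r} does not tolerate. The missing idea is to keep the full $2r$-th power: bound every factor by \eqref{Lambda1}, i.e.\ $\|\sum_i\Lambda^{c,i,n}_s(z)\|^{2r}\lc R_s^{2r}\|z\|^{2r}$ with $\|z\|^{2r}\le\|z\|^2$ $\nu$-integrable (and likewise treat $\E[\|\Delta_t^n\|^{2r}\mid\cF_{t_k}]$ by Lemma \ref{lA1Delta} applied with $r'=2r$, which also renders your localization/absorption unnecessary). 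Then $\E[[\rmM^{n,k}]_{t_{k+1}}\mid\cF_{t_k}]\lc\|\Delta^n_{t_k}\|^{2r}+\Xi_{\gamma,2r}^{n,k}$ in the notation of the paper's proof, and the single square root of this quantity returns $r$-th powers of $U^n_{t_k}$, $\|X^n_{t_k}\|$, $\|X^{(n)}_{t_k}\|$ with exactly the exponents $h_n^{r\gamma+\frac12}$, $h_n^{r\gamma}$, $h_n^{\frac12}$ claimed in \eqref{e:r}. This is precisely the route the paper takes, and it is the step your proposal is missing.
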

\begin{proof}
We use the notation of Lemma \ref{lA1Delta}. We also denote
\ba
\Xi_{\gamma,r}^{n,k}=h_n^{r\gamma +1}(1+\|X_{t_k}^{(n)}\|^{r\gamma\chi+r}) +h_n^{r\gamma }(1 + \|X^n_{t_k}\|^{r\gamma\chi+r})
+h_n  (U_{t_k}^n)^r
+ h_n^{r\gamma+1} (U_{t_k}^n)^{r\gamma\chi}  + h_n.
\ea
By \eqref{Ito2}, we have
\ba
\label{e:Delta*}
\sup_{t\in[t_k, t_{k+1})}\|\Delta_t^n\|^r&\leq \|\Delta_{t_k}^n\|^r
+\int_{t_k}^{t_{k+1}}\big(\mathrm{D}_s^{n,k}\big)_+\, \di s + \sup_{t\in[t_k, t_{k+1})} |\mathrm{M}_t^{n,k}|.\\
\ea
The drift part has already been treated in the proof of Lemma \ref{lA1Delta}, where we obtained the estimate
\ba
\label{est_intG}
\int_{t_k}^{t_{k+1}}\E^{(N)}_{t_1,\dots,t_N}\Big[ \big(\mathrm{D}_s^{n,k}\big)_+ \Big|\mathcal{F}_{t_k}\Big]\, \di s
\leq_C\|\Delta_{t_k}^n\|^r&+\Xi_{\gamma,r}^{n,k}.
\ea
For the martingale part, by the Burkholder--Davis--Gundy inequality we have
\ba
\label{e:BDG0}
\E^{(N)}_{t_1,\dots,t_N} \Big[\sup_{t\in[t_k, t_{k+1})} |\mathrm{M}_t^{n,k}| \Big|\mathcal{F}_{t_k}\Big]
\lc \E^{(N)}_{t_1,\dots,t_N} \Big[[\mathrm{M}^{n,k}]_{t_{k+1}}^{\frac{1}{2}}\Big|\mathcal{F}_{t_k}\Big]
\lc \E^{(N)}_{t_1,\dots,t_N} \Big[[\mathrm{M}^{n,k}]_{t_{k+1}}\Big|\mathcal{F}_{t_k}\Big]^{\frac{1}{2}}.
\ea
We have
\ba
{}[\mathrm{M}^{n,k}]_{t_{k+1}}
&\leq_C \int_{t_k}^{t_{k+1}}  \|\Delta_t^n\|^{2r-2}\sum_{i=0}^4\|\Lambda_t^{b,i,n}\|^2 \,\di t\\&
+\int_{t_k}^{t_{k+1}}\int_{\|z\|\leq 1} \Big(\Big\|\Delta^n_{t-}+\sum_{i=0}^4\Lambda_{t-}^{c,i,n} (z)\Big\|^r-\|\Delta^n_{t-}\|^r\Big)^2{N}(\di z, \di t).
\ea
By the Taylor formula,
\ba
\Big(\Big\|\Delta^n_{t-}+\sum_{i=0}^4\Lambda_{t-}^{c,i,n} (z)\Big\|^r-\|\Delta^n_{t-}\|^r\Big)^2
&\lc \Big(\|\Delta_{t-}^n\|^{r-1}
+ \Big\|\sum_{i=0}^4\Lambda_{t-}^{c,i,n} (z)\Big\|^{r-1}\Big)^2 \Big\|\sum_{i=0}^4\Lambda_{t-}^{c,i,n} (z)\Big\|^2\\
&\lc  \|\Delta_{t-}^n\|^{2r-2} \sum_{i=0}^4\|\Lambda_{t-}^{c,i,n} (z)\|^2 + \sum_{i=0}^4\|\Lambda_{t-}^{c,i,n} (z)\|^{2r}.
\ea
Since  $b(\cdot), c(\cdot)$ are Lipschitz continuous, we have
\ba
\|\Lambda_{t}^{b,0,n} (z)\|\lc \|\Delta_{t}^n\|,\quad \|\Lambda_{t-}^{c,0,n} (z)\|\lc \|\Delta_{t-}^n\|\|z\|.
\ea
Then, applying the Young inequality, we get
\ba
\label{e:brakM}
\E^{(N)}_{t_1,\dots,t_N} \Big[ [\mathrm{M}^{n,k}]_{t_{k+1}}| \Big|\mathcal{F}_{t_k}\Big]
&\leq_C \int_{t_k}^{t_{k+1}} \E^{(N)}_{t_1,\dots,t_N}  \Big[\|\Delta_t^n\|^{2r}\Big|\mathcal{F}_{t_k}\Big]\\
&+ \int_{t_k}^{t_{k+1}} \E^{(N)}_{t_1,\dots,t_N}
\Big[\sum_{i=1}^{4}\|\Lambda_t^{b,i,n}\|^{2r} \Big|\mathcal{F}_{t_k}\Big]\, \di t\\
&+ \int_{t_k}^{t_{k+1}} \int_{\|z\|\leq 1}
\E^{(N)}_{t_1,\dots,t_N}
\Big[\sum_{i=1}^{4}\|\Lambda_t^{c,i,n}(z)\|^{2r} \Big|\mathcal{F}_{t_k}\Big]\, \nu(\di z)\,\di t.
\ea
We use the statement of Lemma \ref{lA1Delta} with $r'=2r>2$ to estimate the expectation of $\|\Delta_{t}^n\|^{2r}$
and repeat, with the same $r'$, the estimates \eqref{e:I*},
\eqref{e:*I} for the terms with $\Lambda_t^{b,i,n}, \Lambda_t^{c,i,n}(z)$.
This yields
\ba
\E^{(N)}_{t_1,\dots,t_N} \Big[ [\mathrm{M}^{n,k}]_{t_{k+1}}| \Big|\mathcal{F}_{t_k}\Big]\lc \|\Delta_{t_k}^n\|^{2r}+\Xi_{\gamma,2r}^{n,k}.
\ea
Therefore, by \eqref{e:Delta*}, \eqref{est_intG}, and \eqref{e:BDG0}, we get
\ba
\E^{(N)}_{t_1,\dots,t_N}\Big[\sup_{t\in[t_k, t_{k+1})}\|\Delta_t^n\|^r\Big|\mathcal{F}_{t_k}\Big]
&\leq_C \|\Delta_{t_k}^n\|^r+ \Xi_{\gamma,r}^{n,k}+\Big(\Xi_{\gamma,2r}^{n,k}\Big)^{\frac{1}{2}},
\ea
which yields the required estimate.
\end{proof}

The rest of the proof of Theorem \ref{t:thm_q_sup} repeats the one of
Theorem \ref{t:thm_q}, with the estimate from Lemma \ref{lA1Delta} replaced by the one from Lemma \ref{lA2}.
Namely, repeating literally the argument from the proof of Corollary \ref{cA3}, we get
for any $q\in (0,p)$
\ba
\E^{(N)}_{t_1,\dots,t_N}\Big[\sup_{t\in[t_k, t_{k+1})} \|\Delta_t^n\|^q\Big|\mathcal{F}_{t_k}\Big]
\leq_C \|\Delta_{t_k}^n\|^q
& + h_n^{(\frac{p-q}{\chi}\wedge q) + (\frac{1}{2}\wedge\frac{q}{4})}(1+\|X_{t_k}^{(n)}\|^p)
+ h_n^{\frac{p-q}{\chi}\wedge q}(1+\|X_{t_k}^n\|^{p})\\
&+ h_n^{\frac{1}{2}\wedge\frac{q}{4}}  (U_{t_k}^n)^q
+h_n^{(\frac{p-q}{\chi }\wedge q)   +(\frac{1}{2}\wedge\frac{q}{4})} (U_{t_k}^n)^{(p-q)\wedge (q\chi)}
+h_n^{\frac{1}{2}\wedge\frac{q}{4}}.
\ea
Repeating then the proof of Lemma \ref{lA4cor}, we get
\ba
\E^{(N)}_{t_1,\dots,t_N}\Big[\sup_{t\in[t_k, t_{k+1})}\|\Delta_t^n\|^q\Big|\mathcal{F}_{t_k-}\Big]
&\leq_C \|\Delta_{t_k-}^n\|^q\\
&+h_n^{\frac{p-q}{\chi}\wedge q}(1+\|X_{t_k-}^{(n)}\|^p)
+ h_n^{\frac{p-q}{\chi}\wedge q}(1+\|X_{t_k-}^n\|^{p})\\
&+ (U^n_{t_k-})^q
+h_n^{\frac{1}{2}\wedge\frac{q}{4}}(U_{t_k-}^n)^{(p-q)\wedge (q\chi)}+h_n^{\frac{1}{2}\wedge\frac{q}{4}}.
\ea
Then the same estimates as in Lemma \ref{cA5} and Lemma \ref{cA7} give
\ba
 \E^{(N)}_{t_1,\dots,t_N}\Big[\sup_{t\in[0, T]}\|\Delta_t^n\|^q\Big]
\leq (N+1)C^{N+1}h_n^{\frac{p-q}{\chi}\wedge q}(1+2\|x\|^p) +(N+1)(2N+1) C^{N+1}h_n^{\frac{1}{2}\wedge\frac{q}{4}}
\ea
if \eqref{typical} is satisfied, and
\ba
\label{e:jhj}
\E^{(N)}_{t_1,\dots,t_N}\Big[\sup_{t\in[0, T]}\|\Delta_t^n\|^q\Big]
\leq (N+1)C^{N+1}h_n^{\frac{p-q}{\chi}\wedge q}(1+2\|x\|^p) +(N+1)(2N+1) C^{N+1}
\ea
otherwise. Overall, we get the convergence rate $\delta=\frac{p-q}{\chi}\wedge\frac{1}{2}\wedge\frac{q}{4}$.
The extra multiplier $(N+1)$ in \eqref{e:jhj} comes from the estimate
\ba
\E^{(N)}_{t_1,\dots,t_N}\Big[\sup_{t\in[0, T]}\|\Delta_t^n\|^q\Big]\leq \sum_{k=0}^N \E^{(N)}_{t_1,\dots,t_N}\Big[\sup_{t\in[t_k, t_{k+1)}}\|\Delta_t^n\|^q\Big]
\leq (N+1)\max_{k=0, \dots, N} \E^{(N)}_{t_1,\dots,t_N}\Big[\sup_{t\in[t_k, t_{k+1)}}\|\Delta_t^n\|^q\Big],
\ea
and does not cause any changes in the final part of the proof, based on \eqref{CPF}.

\section{Proof of Theorem \ref{t:P0}}

Let
\ba
0<q<p\leq p_X<p+\kappa-1.
\ea
Under the conditions of the Theorem \ref{t:P0}, the statements of Theorems
\ref{t:Xpdiss}, \ref{t:Xnpdiss} and
\ref{t:thm_q} hold, in particular, for each $T\in[0,\infty)$
\ba
\sup_{t\in[0,T]}
\E \|\Delta_t^n\|^{p_X}\leq_C (1+\|x\|^{p_X})
\ea
and
\ba
\sup_{t\in[0,T]}
\E \|\Delta_t^n\|^q\leq_C h_n^{\bar \delta}(1+\|x\|^{p}).
\ea
For $\e\in (0,p+\kappa-1-p_X)$ choose
\ba
\gamma=\gamma_{q,\e}:=\frac{p+\kappa-1 - p_X -\e}{p+\kappa-1-q-\e}\in (0,1)
\ea
such that
\ba
q \gamma + (p+\kappa-1-\e)(1-\gamma)=p_X.
\ea
By Hölder's inequality, we get
\ba
\E \|\Delta_t^n\|^{p_X}=\E \Big[(\|\Delta_t^n\|^q)^\gamma (\|\Delta_t^n\|^{p+\kappa-1-\e})^{1-\gamma}\Big]
&\leq (\E \|\Delta_t^n\|^q)^\gamma (\E \|\Delta_t^n\|^{p+\kappa-1-\e} )^{1-\gamma}\\
&\leq_C h_n^{\bar \delta\gamma}(1+\|x\|^{p\gamma+ (p+\kappa-1-\e)(1-\gamma)})
\\& \leq_C h_n^{\bar \delta\gamma}(1+\|x\|^{p+\kappa-1}),
\ea
where in the last inequality we have used that
\ba
p\gamma+ (p+\kappa-1-\e)(1-\gamma)<p+\kappa-1.
\ea
Maximizing the exponent $\bar \delta\gamma$ over $q\in(0,p)$ and $\e$ we get
\ba
\sup_{q,\e}\Big(\frac{p-q}{\chi}\wedge \frac{q}{2}\wedge 1\Big)\cdot \frac{p+\kappa-1 - p_X -\e}{p+\kappa-1-q-\e}
&=\sup_q \Big(\frac{p-q}{\chi}\wedge \frac{q}{2}\wedge 1\Big)\cdot \frac{p+\kappa-1 - p_X}{p+\kappa-1-q}\\
&=\begin{cases}
\frac{p}{2+\chi}\frac{p+\kappa-1 - p_X}{p+\kappa-1-\frac{2p}{2+\chi}},\quad \frac{p}{2+\chi} \in (0,1],\\
\frac{p+\kappa-1 - p_X}{\kappa+\chi-1},\quad  \frac{p}{2+\chi} \in (1,\infty).
\end{cases}
\ea
The supremum is attained at $q^*=\frac{2p}{2+\chi}$ and $q^*=p-\chi$ respectively.

\section{Continuous case. Proof of Theorems \ref{t:XG} and \ref{t:XnG}\label{s:cont}}

The proof of Theorems \ref{t:XG} and \ref{t:XnG} is based on the estimates of Lyapunov functions.

For $\lambda\in (0,\infty)$, let $V(x)=\ex^{\frac{\lambda}{1+\kappa}\|x\|^{1+\kappa}}$, $V\in C^2(\bR^d,\bR)$.
We have
\ba
V_{x}(x)&=\lambda V(x) \|x\|^{\kappa-1} x,\\
V_{xx} (x) &=\lambda V(x)\Big[\lambda\|x\|^{2\kappa-2}xx^T
+ (\kappa-1)\|x\|^{\kappa-3}x x^T
+ \|x\|^{\kappa-1}\mathrm{Id}\Big].
\ea
The It\^o formula combined with the localization and the Fatou Lemma argument yields:
\ba
\E V(X_t)\leq  V(x) + \E \int_0^t \cL V(X_s)\,\di s,
\ea
where
\ba
\cL V(x) & = V(x)\Big[ \lambda \|x\|^{\kappa-1}  \langle A(x),x\rangle  + \lambda \|x\|^{\kappa-1}  \langle a(x),x\rangle
+\frac12 \operatorname{tr} \Big(V_{xx}(x) b(x)b(x)^T \Big)\Big].
\ea
Note that the Hesse matrix $V_{xx}$ is symmetric and positive semi-definite, so that
\ba
\operatorname{tr} \Big(V_{xx}(x) b(x)b(x)^T \Big)
&\leq \operatorname{tr} \Big(V_{xx}(x)\Big)\|b(x)b(x)^T\| \\
&\leq \operatorname{tr} \Big(V_{xx}(x)\Big)\|b(x)\|^2\\
&\leq \lambda  \|b \|^2  V(x)  \Big[\lambda\|x\|^{2\kappa}+(\kappa-1+d)\|x\|^{\kappa-1}\Big].
\ea
Consequently, for $\lambda\in (0,\Lambda)$ and some $C\in(0,\infty)$
\ba
\cL V(x)
& \leq \lambda V(x)\Big[ - C_\text{diss} \|x\|^{2\kappa} +C\|x\|^{\kappa-1} + C\|x\|^{\kappa+1} + C\|x\|^\kappa
+\frac{\lambda}{2}\|b\|^2 \|x\|^{2\kappa} + \frac{\kappa-1+d}{2} \|b\|^2 \|x\|^{\kappa-1 }
\Big]\\
&\leq  \lambda V(x)\Big[ -\Big( C_\text{diss}-\frac{\lambda}{2}\|b\|^2  \Big) \|x\|^{2\kappa}
+  C(\|x\|^{\kappa+1}+1)
\Big].
\ea
Since $-(C_\text{diss}-\lambda/(2\|b\|^2)<0$ and $2\kappa>\kappa+1$ for $\kappa\in (1,\infty)$ we obtain that $\cL V(\cdot)$ is bounded from above, i.e.,
\ba
\cL V(x) \leq_C 1
\ea
and thus \eqref{e:XmomG} follows.

To obtain the uniform estimate, we assume that $\lambda\in [0,\frac{\Lambda}{2})$, and apply the Doob maximal inequality
to the stochastic integral to get
\ba
\E \sup_{t\in[0,T]}\Big|\int_0^t \langle V_x(X_s), b (X_s)  \, \di B_s\rangle\Big|^2
&\leq_C \E \int_0^T \|V_x(X_s) b (X_s)\|^2  \, \di s\\
& \leq_C  \E \int_0^T  V(X_s)^2 \|X_s\|^{2\kappa}  \, \di s.
\ea
Let us fix
$\widetilde \lambda$ such that
$0<2\lambda<\widetilde\lambda<\Lambda$, and let $\widetilde V(x)=\ex^{\frac{\widetilde\lambda}{1+\kappa}}\|x\|^{1+\kappa}$.
Then, $V(x)\|x\|^{2\kappa}\leq_C \widetilde V(x)$, and applying
\eqref{e:XmomG} to $\widetilde V$ we get
\ba
\E \sup_{t\in[0,T]}\Big|\int_0^t \langle V_x(X_s), b (X_s)  \, \di B_s\rangle\Big|^2
& \leq_C  \E \int_0^T  \widetilde V(X_s) \, \di s<\infty.
\ea

To prove Theorem \ref{t:XnG},
we apply the It\^o formula to see that $X^n$ is an It\^o semimartigale with the following representation:
\ba
\label{e:XnB}
X^n_t= x + \int_{0}^t A(X^n_s)\,\di s
&+ \int_0^t \Phi_x(\delta^n_s, Y^n_s) a(\Xn_s)\,\di s\\
&+\frac12 \int_0^t  \operatorname{tr}(\Phi_{xx}(\delta^n_s, Y^n_s)bb^T(\Xn_s))\, \di s\\
&+ \int_0^t \Phi_x(\delta^n_s, Y^n_s) b(\Xn_s) \,\di B_s.
\ea
Applying the It\^o formula to $V(X^n)$ combined with the localization and the Fatou Lemma argument yields
\ba
V(X^n_t)
&=
\lambda \int_{0}^t V(X^n_s) \|X^n_s\|^{\kappa-1}\langle X^{n}_s,A(X^n_s)\rangle   \,\di s\\
&+ \lambda \int_{0}^t V(X^n_s) \|X^n_s\|^{\kappa-1}
\langle X^{n}_s,\Phi_x(\delta^n_s, Y^n_s) a(\Xn_s)\rangle   \,\di s\\
&+ \frac{\lambda}{2} \int_{0}^t V(X^n_s) \|X^n_s\|^{\kappa-1}
\langle X^{n}_s, \operatorname{tr}(\Phi_{xx}(\delta^n_s, Y^n_s)bb^T(\Xn_s))  \rangle   \,\di s\\
&+\frac12\int_0^t  \operatorname{tr}\Big( \Phi_x(\delta^n_s, Y^n_s)^T
V_{xx}(X^n_s) \Phi_x(\delta^n_s, Y^n_s) b(\Xn_s) b(\Xn_s)^T  \Big)\,\di s\\
&+  \int_0^t
\langle V_x(X^n_s),\Phi_x(\delta^n_s,Y^n_s) b (\Xn_s)  \, \di B_s\rangle.
\ea
We have the following estimates:
\ba
V(X^n_s) \|X^n_s\|^{\kappa-1}\langle X^{n}_s,A(X^n_s)\rangle
&\leq -C_\mathrm{diss}V(X^n_s) \|X_s^n\|^{2\kappa},\\
\Big|V(X^n_s) \|X^n_s\|^{\kappa-1} \langle X^{n}_s,\Phi_x(\delta^n_s, Y^n_s) a(\Xn_s)\rangle\Big|
&\leq_C V(X^n_s) \|X^n_s\|^{\kappa} \|\Phi_x(\delta^n_s, Y^n_s)\|\\
&\leq_C V(X^n_s) \|X^n_s\|^{\kappa},\\
\Big|V(X^n_s) \|X^n_s\|^{\kappa-1}\langle X^{n}_s, \operatorname{tr}(\Phi_{xx}(\delta^n_s, Y^n_s)bb^T(\Xn_s))  \rangle \Big|
&\leq_C V(X^n_s) \|X^n_s\|^{\kappa}\|\Phi_{xx}(\delta^n_s, Y^n_s)\| \\
&\leq_C  V(X^n_s) \|X^n_s\|^{\kappa}
\ea
and
\ba
\Big|\operatorname{tr} \Big(  b(X^{(n)}_s)^T \Phi_x(\delta^n_s, Y^n_s)^T
&V_{xx}(X^n_s) \Phi_x(\delta^n_s, Y^n_s) b(X^{(n)}_s) \Big)\Big|\\
&\leq \|b(X^{(n)}_s)\|^2_2 \|\Phi_x(\delta^n_s,  Y^n_s)\|^2 \operatorname{tr}\Big(V_{xx}(X^n_s)\Big)\\
&\leq\lambda \|b\|^2  (1+Ch_n)^2V(X^n_s) \Big(\|X^n_s\|^{2\kappa} +(\kappa-1+d)\|X^n_s\|^{\kappa-1}\Big).
\ea
For $\lambda\in (0,\Lambda)$, we choose $n_0\in\bN$ large enough such that
\ba
\sup_{n\geq n_0}\Big(-C_\mathrm{diss} +\frac{\lambda}{2} \|b\|^2(1+Ch_n)^2  \Big)< 0.
\ea
Then the above estimates combined with the localization and the Fatou Lemma argument yield
\ba
\E V(X_t^n)\leq V(x)+ C t
\ea
with some constant $C\in(0,\infty)$.
The uniform estimate \eqref{e:momXnGsup} is obtained as above.

\section{Continuous case. Proof of Theorem \ref{t:XucpG}\label{s:cont2}}

The following Lemma is the adapted and simplified version of Lemmas \ref{lA1Delta} and \ref{lA2}.
\begin{lem}
\label{t:thmGr}
Let $c(\cdot)\equiv 0$ and let assumptions $\mathbf{H}^{\mathrm{Lip}_+}_A$, $\mathbf{H}_{A_x,A_{xx}}$, and
$\mathbf{H}_{a,b,c}^{\mathrm{Lip}_b}$ hold. Then for any $T\in[0,\infty)$ and  $r\in (0,\infty)$,
\ba
\E \sup_{t\in[0, T]}\|X_t^n-X_t\|^r \leq_C h_n^\frac{r}{2} (1+ \|x\|^{r(\chi+1)}),\quad x\in\mathbb R^d,
\ea
for $n$ large enough.
\end{lem}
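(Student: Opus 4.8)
The plan is to specialize the machinery of Lemmas~\ref{lA1Delta} and~\ref{lA2} to the situation $c\equiv 0$, $N=0$ (no big jumps, so the single interval between big jumps is all of $[0,T]$), and $\gamma=1$. In this regime the estimates simplify substantially: all $\Lambda^{c,\cdot}$-terms disappear, the small-jump component $\widehat Z$ no longer enters, and $U_t^n$ reduces to $h_n+\|B_t^n\|$, for which $\E(U_t^n)^q\leq_C h_n^{q/2}$ for every $q>0$ and $n$ large. It suffices to treat $r\in[2,\infty)$; the case $r\in(0,2)$ then follows from Jensen's inequality together with $(1+\|x\|^{2(\chi+1)})^{r/2}\leq_C 1+\|x\|^{r(\chi+1)}$.

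First I would record the semimartingale representation of $\Delta^n_t=X^n_t-X_t$ on $[0,T]$: combining~\eqref{e:XnB} with the (jump-free) SDE for $X$,
\[
\di\Delta_t^n=(A(X^n_t)-A(X_t))\,\di t+\sum_{i=0}^4\Lambda_t^{a,i,n}\,\di t+\Lambda_t^{b,5,n}\,\di t+\sum_{i=0}^4\Lambda_t^{b,i,n}\,\di B_t,
\]
with the $\Lambda$-terms defined exactly as in the proof of Lemma~\ref{lA1Delta}. Applying the It\^o formula to $\|\Delta_t^n\|^r$ and estimating $\langle\Delta_t^n,A(X^n_t)-A(X_t)\rangle\leq L\|\Delta_t^n\|^2$ via $\mathbf{H}^{\mathrm{Lip}_+}_A$, exactly as in the passage~\eqref{Ito2}--\eqref{estG}, produces a representation $\|\Delta_t^n\|^r=\int_0^t\mathrm{D}_s^n\,\di s+\mathrm{M}_t^n$ with a continuous local martingale $\mathrm{M}^n$ vanishing at $0$ and $\mathrm{D}_s^n\leq_C\|\Delta_s^n\|^r+\mathrm{F}_s^n$, where $\mathrm{F}_s^n:=\sum_{i=1}^4\|\Lambda_s^{a,i,n}\|^r+\sum_{i=1}^5\|\Lambda_s^{b,i,n}\|^r$ (all the integral-in-$\nu$ terms of~\eqref{estG} vanish since $c\equiv0$).

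The next step is a \emph{pointwise pass}. By localization, the Gronwall lemma and the Fatou lemma, $\E\|\Delta_t^n\|^r\leq_C\int_0^t\E\|\Delta_s^n\|^r\,\di s+\int_0^t\E\mathrm{F}_s^n\,\di s$, so it remains to bound $\E\mathrm{F}_s^n$. Taking $\gamma=1$ in~\eqref{Lambda1} (and in the analogous bound for $\Lambda^{b,5,n}$) bounds each $\|\Lambda_s^{a,i,n}\|$, $\|\Lambda_s^{b,i,n}\|$ by $\leq_C h_n(1+\|\Xn_s\|^{\chi+1})+U_s^n+h_n(U_s^n)^\chi$; raising to the $r$-th power, taking expectations, and using Lemma~\ref{l:estXn} (which on the single interval $[0,T]$ gives $\sup_n\sup_{t\in[0,T]}\E\|X^n_t\|^q\leq_C 1+\|x\|^q$, hence the same for $\Xn_t$), the bound $\E(U_s^n)^q\leq_C h_n^{q/2}$, and $h_n\leq h_n^{1/2}$, one obtains $\E\mathrm{F}_s^n\leq_C h_n^{r/2}(1+\|x\|^{r(\chi+1)})$ uniformly in $s\in[0,T]$ and large $n$. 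Gronwall then yields $\sup_{t\in[0,T]}\E\|\Delta_t^n\|^r\leq_C h_n^{r/2}(1+\|x\|^{r(\chi+1)})$ for every $r\in[2,\infty)$.

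Finally, the \emph{maximal pass}: $\sup_{t\in[0,T]}\|\Delta_t^n\|^r\leq\int_0^T(\mathrm{D}_s^n)_+\,\di s+\sup_{t\in[0,T]}|\mathrm{M}_t^n|$, where the first term is controlled by the pointwise pass, and for the second the Burkholder--Davis--Gundy and Jensen inequalities give $\E\sup_{t\in[0,T]}|\mathrm{M}_t^n|\leq_C(\E[\mathrm{M}^n]_T)^{1/2}$, with, by continuity of $\mathrm{M}^n$ and Young's inequality, $[\mathrm{M}^n]_T\leq_C\int_0^T(\|\Delta_s^n\|^{2r}+\sum_i\|\Lambda_s^{b,i,n}\|^{2r})\,\di s$. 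Feeding in the pointwise bound at exponent $2r$ (for $\E\|\Delta_s^n\|^{2r}$ and, by the same computation as above, $\E\|\Lambda_s^{b,i,n}\|^{2r}\leq_C h_n^{r}(1+\|x\|^{2r(\chi+1)})$) gives $\E[\mathrm{M}^n]_T\leq_C h_n^r(1+\|x\|^{2r(\chi+1)})$, hence $\E\sup_{t\in[0,T]}|\mathrm{M}_t^n|\leq_C h_n^{r/2}(1+\|x\|^{r(\chi+1)})$, and combining the two terms gives the claim for $r\in[2,\infty)$; Jensen extends it to all $r>0$. I would flag two points as the places requiring care: the two-pass structure is essential, since the BDG estimate at exponent $r$ produces $\|\Delta_s^n\|^{2r}$ in the quadratic variation, so the non-maximal bound must first be established at \emph{all} exponents to avoid circularity; and the bookkeeping of the powers of $\|x\|$, where one must check that, with $\gamma=1$ and $h_n\leq h_n^{1/2}$, every term collapses to a constant multiple of $h_n^{r/2}(1+\|x\|^{r(\chi+1)})$ — in particular that the worst weight on the initial datum coming from the $\Xn$-dependent $\Lambda$-terms is exactly $r(\chi+1)$, which is why the one-sided-Lipschitz-only moment bound of Lemma~\ref{l:estXn} is enough and dissipativity is not needed here.
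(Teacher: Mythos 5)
Your proposal is correct and follows essentially the same route as the paper: specialize the jump-case machinery (the $\Lambda$-decomposition of Lemma \ref{lA1Delta} with $\gamma=1$, all $\Lambda^{c,\cdot}$-terms vanishing), use $\E(U^n_s)^q\leq_C h_n^{q/2}$ and a Gronwall moment bound for $X^{(n)}$ to get the pointwise estimate at every exponent, then pass to the supremum via Burkholder--Davis--Gundy at exponent one exactly as in Lemma \ref{lA2}, feeding in the pointwise bound at exponent $2r$. The only cosmetic differences are that you cite Lemma \ref{l:estXn} where the paper simply notes the Gronwall argument, and that you spell out the Jensen step for $r\in(0,2)$, which the paper leaves implicit.
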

\begin{proof}
For any $r\in[2,\infty)$, the formula \eqref{Ito} holds with all the terms containing $\Lambda^{c,i,n}$
being zero.
The estimate \eqref{estDelta} takes the form
\ba
\label{estDelta1}
\E \|\Delta_t^n\|^r
\leq_C \|\Delta_{t_k}^n\|^r+ \sum_{i=1}^4 \int_{0}^{t}\E \|\Lambda_t^{a,i,n}\|^r \,\di s
+\sum_{i=1}^5 \int_0^t \|\Lambda_t^{b,i,n}\|^r\,\di s.
\ea
The estimates \eqref{Lambda1} hold, too, namely, taking $\gamma=1$ we get
\ba
\sum_{i=1}^4 \|\Lambda_s^{a,i,n}\|^r+ \sum_{i=1}^b \|\Lambda_s^{b,i,n}\|^r
\leq_C h_n^{r}(1+\|X^{(n)}_{s}\|^{r\chi+r})+(U_{s}^n)^r+h_n^{r }(U_s^n)^{r\chi}.
\ea
Since $U^n$ does not contain a jump component,
\ba
\E (U^n_s)^r\leq_C  h_n^\frac{r}{2}.
\ea
It can be easily shown with the help of the Gronwall argument that
\ba
\sup_{t\in[0,T]}\E \|X^{(n)}_{s}\|^{r\chi+r}\leq_C  1+ \|x\|^{r\chi+r}.
\ea
Finally, since the special consideration of the time intervals $[t_k,t_k^{n,*}]$ is obsolete, we immediately get the
estimate
\ba
\label{e:nnn}
\sup_{t\in[0,T]}\E\|\Delta^n_t\|^r\leq_C  h_n^\frac{r}{2} (1+ \|x\|^{r\chi+r}).
\ea
To obtain the uniform estimate, we continue as in Lemma \ref{lA2} and apply
the Burkholder--Davies--Gundy inequality with $p=1$ to the stochastic integral. This yields the desired estimate.
\end{proof}

To prove Theorem \ref{t:XucpG}, we use the Hölder inequality and estimates from Theorems \ref{t:XG}, \ref{t:XnG},
and Lemma \ref{t:thmGr}.
Let $\lambda\in (0,\frac{\Lambda}{2})$ and $r\in (0,\infty)$, and let $\lambda'\in (\lambda,\frac{\Lambda}{2})$.
Then
\ba
\E\Big[\sup_{t\in[0,T]}\ex^{\frac{\lambda}{1+\kappa}\|X^n_t\|^{1+\kappa}}\|\Delta^n_t\|^r\Big]
&\leq \E\Big[\sup_{t\in[0,T]}\ex^{\frac{\lambda}{1+\kappa}\|X^n_t\|^{1+\kappa}}\cdot \sup_{t\in[0,T]}\|\Delta^n_t\|^r\Big]\\
&\leq
\Big(\E\sup_{t\in[0,T]}\ex^{\frac{\lambda'}{1+\kappa}\|X^n_t\|^{1+\kappa}}\Big)^{\frac{\lambda}{\lambda'}}\cdot
\Big(\E \sup_{t\in[0,T]}\|\Delta^n_t\|^\frac{r\lambda'}{\lambda'-\lambda }\Big)^{\frac{\lambda'-\lambda}{\lambda'}}\\
&\leq_C
\Big(\ex^{\frac{\lambda'}{1+\kappa}\|x\|^{1+\kappa}}\Big)^{\frac{\lambda}{\lambda'}}\cdot
\Big(h_n^\frac{r\lambda'}{2(\lambda'-\lambda)}
(1+ \|x\|^\frac{r\lambda'(1+\chi)}{\lambda'-\lambda })\Big)^{\frac{\lambda'-\lambda}{\lambda'}}\\
&\leq h_n^\frac{r}{2}(1+\|x\|^{r(1+\chi)})\ex^{\frac{\lambda}{1+\kappa}\|x\|^{1+\kappa}}.
\ea
The same estimate holds for the weight $\ex^{\frac{\lambda}{1+\kappa}\|X^n_t\|^{1+\kappa}}$.
The estimate \eqref{e:Gauss0} is obtained analogously with the help of \eqref{e:XmomG} and \eqref{e:momXnG}.

\section{Notation\label{s:nota}}
In this paper we use the following notation.

For real valued functions $f$, $g$, we write $f(x)\lc g(x)$ if there exists a constant $C\in (0,\infty)$ such that
\ba
f(x)\leq C g(x), \quad x\in \bR^d,
\ea
and we do not need this constant for a further reference. In case that functions $f$, $g$
depend on additional parameters $t,n,\omega$, etc, the above inequality should hold uniformly over these parameters.

By $\bR^{d\times d}$ we denote the space of square $d$-dimensional real valued matrices.
For a matrix $b\in \bR^{d\times d}$, $b^T$ denotes the
transposed matrix. The identity matrix is denoted by $\mathrm{Id}$.
Vectors (columns) in $\bR^d$ are denoted by $x=(x^1,\dots,x^d)^T$, and for $x,y\in \bR^d$
\ba
\langle x,y\rangle= x^Ty= x^1y^1 + \cdots + x^d y^d
\ea
is the usual  scalar product with the associated Euclidean norm
\ba
\|x\|=\sqrt{\langle x,x\rangle}= \sqrt{(x^1)^2 + \cdots + (x^d)^2}.
\ea
For vector-valued functions $A,\Phi\colon \bR^d\to\bR^d$ etc, we write
\ba
A&=\begin{pmatrix}
A^1\\
\vdots\\
A^d\\
\end{pmatrix},\quad
A_x =
\begin{pmatrix}
             A^1_{x_1}& \cdots & A^1_{x_d}\\
            \vdots  & \ddots  &    \vdots  \\
             A^d_{x_1} & \cdots & A^d_{x_d}\\
           \end{pmatrix}
 =\begin{pmatrix}
A_{x_1},\ldots,A_{x_d}
\end{pmatrix},\\
A^i_x&=(A^i_{x_1},\ldots, A^i_{x_d}),\quad i=1,\dots,d.
\ea
By $A_{xx}$ we denote the second-order gradient tensor $(A^i_{x^jx^k})$ with the components
\ba
A_{x^ix^j}=\begin{pmatrix}
A^1_{x^ix^j}\\
\vdots\\
A^d_{x^ix^j}\\
\end{pmatrix},
\quad
A^i_{xx}:=
\begin{pmatrix}
            A^i_{x^1x^1} & \cdots & A^i_{x^1x^d}\\
            \vdots&\ddots&\vdots\\
            A^i_{x^1x^d} & \cdots & A^i_{x^dx^d}\\
           \end{pmatrix}.
\ea
Let $b\colon \bR^d\to\bR^{d\times d}$ be a matrix valued mapping.
We denote
\ba
\operatorname{tr}(A_{xx} bb^T )
=
\begin{pmatrix}
             \operatorname{tr}(A^1_{xx} bb^T )\\
            \vdots\\
             \operatorname{tr}(A^d_{xx} bb^T )\\
           \end{pmatrix}.
\ea
For a matrix-valued function $b$, we use the norm
\ba
\|b(x)\|&=\sqrt{\sum_{i,j=1}^d |b^i_j(x)|^2}=\sqrt{\operatorname{tr}(b(x)b^T(x))}=\sqrt{\operatorname{tr}(b^T(x)b(x))}.
\ea
For the tensors $A_{xx}$, $\Phi_{xx}$ etc, we use the norm
\ba
\|A_{xx}(x)\|&=\sqrt{\sum_{i,j,k=1}^d |A^k_{x^ix^j}(x)|^2} .
\ea
With some abuse of notation, we also denote
\ba
\|b\|&:=\sup_{x\in\bR^d}\|b(x)\|,\\
\|A_{xx}\|&:=\sup_{x\in\bR^d}\|A_{xx}(x)\|.
\ea
We use the following estimates (recall that $\Phi^i_{xx}$ and $bb^T$ are symmetric and positive semi-definite):
\ba
\operatorname{tr}(\Phi^i_{xx}(x)b(x)b^T(x))\leq  \operatorname{tr}(\Phi^i_{xx}(x)) \operatorname{tr}(b(x)b^T(x))
\leq \operatorname{tr}(\Phi^i_{xx}(x)) \|b(x)\|^2 \leq \sqrt{d}\|\Phi^i_{xx}(x)\| \|b(x)\|^2.
\ea

For $a,b\in\bR$, we denote $a\wedge b:=\min\{a,b\}$, $a_+:=\max\{a, 0\}$, and $a_-:=\max\{-a, 0\}$.

We often use the following elementary inequalities: for any $\gamma\in [0,1]$,
\ba
\label{e:max}
a\wedge b\leq a^\gamma b^{1-\gamma},\quad a,b\in(0,\infty),
\ea
and for any fixed $r\in(0,\infty)$, $m\in \bN$,
\ba
\Big(\sum_{k=1}^m a_k\Big)^r\leq_C \sum_{k=1}^m a_k^r, \quad a_1, \dots, a_m\in[ 0,\infty).
\ea
We also use two Young inequalities: for $a,b\in[ 0,\infty)$ we have
\ba
\label{e:young}
ab^{r-1}\leq \frac{1}{r}a^r+ \frac{r-1}{r}b^r, \quad r\in[1,\infty),\\
a^2b^{r-2}\leq \frac{2}{r}a^r+ \frac{r-2}{r}b^r, \quad r\in[2,\infty).
\ea
For $\phi\in C^2(\bR^d,\bR)$, the following Taylor formulae hold:
\begin{align}
\label{e:taylor1}
\phi(x+y)-\phi(x)
&= \int_0^1 \langle y,  \phi_x(x+ \theta y) \rangle \,\di \theta,\\
\label{e:taylor2}
\phi(x+y)-\phi(x) - \langle \phi_x(x),y\rangle
&=\int_0^1  \langle y, \phi_{xx}(x+ \theta y) y\rangle     (1-\theta)\,\di \theta.
\end{align}

Finally, the norm mapping $x\mapsto \|x\|^r$, $r\in[2,\infty)$, has the following derivatives:
\ba
\label{e:dnorm}
\partial_{x}\|x\|^r&= r \|x\|^{r-2}x^T,\\
\partial_{xx}\|x\|^r&= r(r-2) \|x\|^{r-4}x x^T + r \|x\|^{r-2}\mathrm{Id}.
\ea

\section*{Acknowledgments}
O.A.\ acknowledges funding from the DFG project AR 1717/2-1 (548113512).
The text of this paper was partially reviewed for spelling and grammar using
ChatGPT.

\end{document}